\documentclass[a4paper,10pt]{article}
\usepackage[margin=2.5cm]{geometry}
\usepackage[utf8]{inputenc}
\usepackage{amsmath,amsthm,amssymb}
\usepackage{empheq}

\usepackage{hyperref}
\hypersetup{
  colorlinks=true,
  linkcolor=black,
  citecolor=blue,
  urlcolor=blue,
  pdftitle={},
  pdfauthor={},
}
\newtheorem{thm}{Theorem}[section]
\newtheorem{lem}[thm]{Lemma}
\newtheorem{rem}[thm]{Remark}

\newtheorem{definition}[thm]{Definition}
\swapnumbers
\usepackage{xcolor}

\newcommand{\eps}{\varepsilon}
\newcommand{\R}{\mathbb{R}}
\newcommand{\N}{\mathbb{N}}

\newcommand{\io}{\int_\Omega}

\usepackage{graphicx}

\allowdisplaybreaks
\title{Global existence of weak solutions in a Keller-Segel system with local anisotropy and nonlinear diffusion}
\author{Max Hegenbart\\
\small{Leibniz Universität Hannover}\\
\small{Institut für Angewandte Mathematik}\\
\small{hegenbart@ifam.uni-hannover.de}}

\begin{document}

\maketitle

\begin{abstract}
We show global existence of weak solutions to the following system of partial differential equations with nonlinear diffusion
in both equations and anisotropic signal production
\begin{equation}\label{Hauptsystem Abstract}
 \begin{cases}
  u_t=\nabla\cdot((u+1)^{m_1-1}\nabla u)-\nabla\cdot(u\nabla v),\\
  v_t=\nabla\cdot((v+1)^{m_2-1}\nabla v)+\nabla\cdot(u\nabla v)-v+u,
 \end{cases}
\end{equation}
in a smooth bounded domain $\Omega\subseteq\R^n$, $n\in\N$ with nonnegative initial data and homogenuous Neumann
boundary conditions. Moreover, we show boundedness of $u$ in $L^\infty((0,\infty);\:L^{m_1}(\Omega))$ and
$v$ in $L^\infty(\Omega\times(0,\infty))$.
The strategy will be to show global existence and boundedness
of classical solutions $(u_\eps,v_\eps)$ to the approximating system
\begin{equation}\label{eps>0-System Abstract}
\begin{cases}
 u_{\eps t}=\nabla\cdot((u_\eps+1)^{m_1-1}\nabla u_\eps)
 -\nabla\cdot(\frac{u_\eps\nabla v_\eps}{1+\eps\vert\nabla v_\eps\vert})\\
 v_{\eps t}=\nabla\cdot((v_\eps+1)^{m_2-1}\nabla v_\eps)+\nabla\cdot(u_\eps\nabla v_\eps)-v_\eps+u_\eps.
\end{cases}
\end{equation}
Passing to the limit $\eps\searrow 0$, we obtain convergence of weak solutions of \eqref{eps>0-System Abstract} to
weak solutions of system \eqref{Hauptsystem Abstract}.
One of the main challenges lie in the proof of weak convergence of
$\frac{\nabla v_\eps}{1+\eps\vert\nabla v_\eps\vert}$ to $\nabla v$ in $L^2(\Omega\times(0,T))$ for $T\in(0,\infty)$.
\end{abstract}

\textbf{Keywords:} chemotaxis; anisotropic signal production; global existence\\
\textbf{Math Subject Classification (MSC 2020):} 35K51 (primary);  	
35K59;   	
35B45;   	
35Q92;   	
92C17 

\section{Introduction}
The emergence of patterns generated by cells or particles, following gradients of a chemical signal,
plays an important role in biology, immunology, cancer and even social sciences \cite{Painter2019}.
Based on observations in experiments, there seems to be a large potential in processes where patterns of
limited size occur due to anisotropic signal production.
This means that only parts of the
particles are able to react with the surrounding chemical. Hence, the local density of the chemical
depends on the orientation of the particles \cite{PhysRevLett.115.258301}.
With asymmetrically coated particles there are far more promising applications
in biomedical applications than with
homogenuous particles. For exapmle, \textit{Janus particles} are used as artifical antigen-presenting cells
for T cell activation \cite{Bo2014}.
Although there are experimental results and observations, the mathematical analysis of pattern formations
in context of anisotropic chemoattractant signal production seems to be at an early stage.
We consider the following Neumann inital-boundary value problem
\begin{equation}\label{Poröse Medien mit nichtlinearer Diffusion}
 \begin{cases}
  u_t=\nabla\cdot((u+1)^{m_1-1}\nabla u)-\nabla\cdot(u\nabla v)&\quad\text{in }\Omega\times(0,T_{max}),\\
  v_t=\nabla\cdot((v+1)^{m_2-1}\nabla v)+\nabla\cdot(u\nabla v)-v+u&\quad\text{in }\Omega\times(0,T_{max}),\\
  \partial_\nu u=\partial_\nu v=0&\quad\text{on }\partial\Omega\times(0,T_{max}),\\
  u(x,0)=u_0(x),\:v(x,0)=v_0(x)&\quad x\in\Omega,
 \end{cases}
\end{equation}
where $u=u(x,t)$ describes the density of the particels and $v=v(x,t)$ desbribes the density of the chemical.

Even if system \eqref{Poröse Medien mit nichtlinearer Diffusion} is similar to the \textit{classical}
Keller-Segel system of chemotaxis
\begin{equation}\label{klassisches Keller-Segel System}
 \begin{cases}
  u_t=\Delta u-\nabla\cdot(u\nabla v),\\
  v_t=\Delta v-v+u,
 \end{cases}
\end{equation}
which was introduced in \cite{KELLER1970399} and is famous for blow-up phenoma
(see \cite{MR2013508,BELBELTAOWIN2015,MR4071449}),
it is far more complex due to the anisotropic correction term $+\nabla\cdot(u\nabla v)$ in the second equation.
This term arrises whenever the chemical is produced asymmetrically by the colloid \cite{PhysRevLett.115.258301}.
Therefore, the second equation of \eqref{Poröse Medien mit nichtlinearer Diffusion}
is quasilinear even in the case of $m_2=1$ instead of linear as in the classical Keller-Segel
system \eqref{klassisches Keller-Segel System}. The typical question for such systems is whether solutions
exists globally or blow up at finite time.
For \eqref{klassisches Keller-Segel System},
it is known that in two-dimensional domains there are radially symmetric
initial data such that the corresponding solution blows up in finite time with respect to the
function $u$, if $\Vert u_0\Vert_{L^1(\Omega)}>m_c$ \cite{MR1627338} for a certain critical mass $m_c>0$.
Winkler \cite{ScienceDirect:S0021782413000287} proved that in the higher-dimensional setting, for
each $\mu>0$ there are radially symmetric initial data
$(u_0,v_0)\in C^0(\overline{\Omega})\times W^{1,\infty}(\Omega)$ with $\Vert u_0\Vert_{L^1(\Omega)}=\mu$
such that the corresponding solution of \eqref{klassisches Keller-Segel System} blows up at finite time.

In contrast to previously studied versions of \eqref{Poröse Medien mit nichtlinearer Diffusion} with
$m_1=m_2=1$ (see \cite{tao2025suppression,tao2026preventinglpblowuplocal,Zhao_2026} and the discussion below),
we will mainly be interested in system variants with nonlinear (PME-type) diffusion.
For the Keller-Segel system there are several results for the analogous quasilinear case
\begin{equation}\label{Poröse Medien ohne anisotropischen Term}
 \begin{cases}
  u_t=\nabla\cdot(D(u)\nabla u-S(u)\nabla v),\\
  v_t=\Delta v-v+u,
 \end{cases}
\end{equation}
where typical choices are $D(u)=(u+1)^{m-1}$ and $S(u)=u(u+1)^{\sigma-1}$ or $S(u)=u^\sigma$.
Whether all solutions are global or finite time blow-up may occur,
depends on the values of the exponents $m$ and $\sigma$ or the inital mass.
In the setting of $m=\sigma=1$,
we obtain the classical Keller-Segel-system \eqref{klassisches Keller-Segel System}.
Solutions of \eqref{Poröse Medien ohne anisotropischen Term} exist globally if $m$ is sufficiently large
(see \cite{ishida2012global,Cieslak_Stinner_2015,MIMURA20171477} or \cite{Winkler_2019} for $\sigma\leq0$)
or in some settings the initial data
have to be suffciently small \cite{MR4026927}. Finite time blow-up may occur for sufficiently small
values of $m$ (see \cite{Cieslak_Stinner_2015,Hashira_Ishida_Yokota_2018,Cao_Fuest_2025} or
\cite{Laurencot_Mizoguchi_2017} for a critical value in the full space). Similar results are known
for parabolic-elliptic versions in bounded domains (for large $m$ see \cite{Lankeit_2020}, while for
small initial data see \cite{Fuest_Lankeit_Tanaka_2023}) and in the full space
\cite{Sugiyama2007,blanchet2009critical,Liu_Wang_2015}.
Next to explixit choices of $D$ and $S$, we can assume that they satisfy growth conditions like
$\frac{S(u)}{D(u)}\leq C_1(u+\eta)^\gamma$ for constants $C_1,\gamma>0$ and arbitrary $\eta>0$.
Then global and bounded solutions exist if $\gamma<\frac{2}{n}$ and whenever $\eta>0$
\cite{Tao_Winkler_2012,Ishida_Seki_Yokota2014,Tian_Zheng_2016}, while for $\eta=0$ we can find
global weak solutions \cite{Ishida_Seki_Yokota2014}. If $\frac{S(u)}{D(u)}\geq C_2u^\gamma$ for some
$C_2>0$ and $\gamma>\frac{2}{n}$, then there is no global solution \cite{stinner2024critical,Tian_Zheng_2016}.
Diffusion of this kind is what we aim to study for system \eqref{Poröse Medien mit nichtlinearer Diffusion}.
For the chemical we admit nonlinear ($m_2>1$) as well as linear ($m_2=1$) diffusion in addition to its
anisotropic production.
Therefore, we assume the following for system \eqref{Poröse Medien mit nichtlinearer Diffusion}:
\begin{align}\label{Generalvoraussetzungen}
    \begin{split}
       &n\in\N,\:\Omega\subseteq\R^n \text{ to be a smooth bounded domain},\\
       &u_0\in L^2(\Omega),v_0\in C^0(\overline{\Omega}), \text{ to be nonnegative in }\Omega,\\
       &m_1>\frac{n}{2},\:m_1\geq2,\: m_2\geq 1.
       \end{split}
    \end{align}
The case $m_1=m_2=1$ was considered by Tao and Winkler in \cite{tao2025suppression}, where they worked with
\begin{equation}\label{Tao Winkler System}
 \begin{cases}
  u_t=D\Delta u-\nabla\cdot(u\nabla v),\\
  v_t=d\Delta v+\nabla\cdot(u\nabla v)-v+u.
 \end{cases}
\end{equation}
They showed global existence and boundedness of classical solutions in a smooth bounded domain
$\Omega\subseteq\R^n$ with $n\leq 5$,
if $d,D\in[D_0,D_1]$ for postive constants $D_0,D_1$ and if $\vert d-D\vert<\delta$ for a certain $\delta>0$.
The trick in the proof is to introduce a function $w:=u+v$, which solves
\begin{align*}
 w_t=D\Delta w+(d-D)\Delta v-2v+w.
\end{align*}
The advantage is that the cross-diffusion term in the first equation is cancelled, which then allows them to show
global existence and boundedness by using some estimates for the heat equation semigroup
and using boundedness of $\nabla w(\cdot,t)$
and $\nabla v(\cdot,t)$ in $L^q(\Omega)$, $q>n$, for all $t\in(0,T_{max})$.
For arbitrary $D>0$ and $d>0$, Tao and Winkler
\cite{tao2026preventinglpblowuplocal} could show that for nonnegative initial data
$u_0,v_0\in W^{1,\infty}(\Omega)$ with the property
\begin{align*}
 \Vert u_0\Vert_{L^\infty(\Omega)}+\Vert v_0\Vert_{L^\infty(\Omega)}\leq K,
\end{align*}
for a constant $K>0$, one can find nonnegative functions
\begin{align*}
 (u,v)\in \left( L_{loc}^2([0,\infty);\:W^{1,2}(\Omega)) \right)^2,
\end{align*}
and constants $\sigma(K),C(K)>0$ such that $(u,v)$ is a global weak solution of \eqref{Tao Winkler System} and
\begin{align*}
 &\io e^{u^{\sigma(K)}(\cdot,t)}\leq C(K),\quad\text{for a.e. }t>0\\
 &\Vert v(\cdot,t)\Vert_{L^\infty(\Omega)}\leq C(K)\quad\text{for a.e. }t>0.
\end{align*}
The main idea is to use the approximation $\frac{u_\eps}{1+\eps u_\eps}$ in the cross-diffusion term in the
first equation as well as in the signal production in the second equation and consider the function
$w_\eps:=u_\eps+v_\eps$ \cite{tao2026preventinglpblowuplocal}.
In the one-dimensional setting, one obtains global and bounded classical solutions
if $D>0$ is large enough \cite[Theorem 1.1]{Zhao_2026}.
In the case of nonlinear diffusion and without restriction on the spatial dimension,
we obtain the following result for system \eqref{Poröse Medien mit nichtlinearer Diffusion}:
\begin{thm}\label{Thm globale Existenz}
 Suppose \eqref{Generalvoraussetzungen}.
 Then there is a global weak solution $(u,v)$ of
 \eqref{Poröse Medien mit nichtlinearer Diffusion} in the sense of Definition
 \ref{schwache Lösungsdefinition für Grenzsystem}.
 Moreover, there is a constant $C>0$ such that
 \begin{align}\label{Beschränktheit der Lösungen des Grenzsystems im Thm}
  \Vert u(\cdot,t)\Vert_{L^{m_1}(\Omega)}+\Vert v(\cdot,t)\Vert_{L^\infty(\Omega)}\leq C
 \end{align}
for almost all $t\in(0,\infty)$.
\end{thm}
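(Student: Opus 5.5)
We follow the strategy indicated in the abstract. We work with the regularized system in which the first-equation flux $u_\eps\nabla v_\eps$ is replaced by $\frac{u_\eps\nabla v_\eps}{1+\eps|\nabla v_\eps|}$, and we also approximate the initial data by smooth nonnegative $u_{0\eps}\to u_0$ in $L^2(\Omega)$ and $v_{0\eps}\to v_0$ uniformly. By standard quasilinear parabolic theory (Amann) there is a local classical solution $(u_\eps,v_\eps)$ on $[0,T_{max,\eps})$ with a blow-up criterion in a Hölder/$W^{1,q}$ norm. Nonnegativity of $u_\eps$ follows from the maximum principle. Nonnegativity of $v_\eps$ also follows, since $v_\eps=0$ forces $v_{\eps t}\ge u_\eps\ge 0$ once the term $\nabla\cdot(u_\eps\nabla v_\eps)=\nabla u_\eps\cdot\nabla v_\eps+u_\eps\Delta v_\eps$ is treated as a drift plus nonnegative diffusion contribution.

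The key a priori step is an $\eps$-independent energy-type estimate that exploits the opposite signs of the cross terms. Testing the first equation by $(u_\eps+1)^{m_1-1}$ (or against $u_\eps^{m_1-1}$) and the second by a suitable power or function of $v_\eps$ should produce cross terms of the form $\io u_\eps\frac{\nabla v_\eps\cdot\nabla u_\eps\,(\ldots)}{1+\eps|\nabla v_\eps|}$ and $-\io u_\eps\nabla v_\eps\cdot\nabla\varphi(v_\eps)$. The first is absorbed via Young into $\io(u_\eps+1)^{2m_1-3}|\nabla u_\eps|^2$ plus $\io u_\eps^{?}|\nabla v_\eps|^2$. The anisotropic term in the $v$-equation gives the good dissipation $\io u_\eps\varphi'(v_\eps)|\nabla v_\eps|^2$, which can control the latter. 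This is where $m_1\ge2$ enters: it makes $u^{2}/(u+1)^{m_1-1}\lesssim u$, so the bad term is dominated by $\io u_\eps|\nabla v_\eps|^2$.

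For the $L^\infty$ bound on $v_\eps$, we run a Moser/Alikakos iteration on the $v$-equation. The source $u_\eps$ is bounded in $L^{m_1}$ uniformly in time, and $m_1>\frac n2$ is exactly the integrability needed for $L^\infty$ bounds of the elliptic–parabolic operator. The cross term $\io u_\eps\nabla v_\eps\cdot\nabla v_\eps^{p-1}$ has a good sign when tested, so it only helps. With $\|v_\eps\|_{L^\infty}$ bounded, a Gronwall-type argument (using the $-v+u$ structure) gives $\sup_t\|u_\eps\|_{L^{m_1}}\le C$ together with spacetime bounds on $\nabla (u_\eps+1)^{m_1-\frac12}$, $\nabla v_\eps$ in $L^2$, and $u_\eps|\nabla v_\eps|^2$ in $L^1$. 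For fixed $\eps$, the truncation $|\frac{\nabla v_\eps}{1+\eps|\nabla v_\eps|}|\le\frac1\eps$ makes the $u$-equation a porous medium equation with bounded drift, so $u_\eps\in L^\infty$ locally in time. Hölder regularity for the $v$-equation then yields the gradient bound needed to exclude blow-up, so $T_{max,\eps}=\infty$.

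Finally we pass to the limit. Time-derivative bounds in $(W^{1,\infty}_0)^*$-type dual spaces together with Aubin–Lions give $u_\eps\to u$ and $v_\eps\to v$ a.e. and in $L^p_{loc}$, and $\nabla v_\eps\rightharpoonup\nabla v$ in $L^2$. We identify the limits of $\nabla\cdot(u_\eps\nabla v_\eps)$ via strong convergence of $u_\eps$ in $L^2$ (by the $L^{m_1}$, $m_1\ge2$, bounds and the gradient bounds). The main obstacle is the weak convergence $\frac{\nabla v_\eps}{1+\eps|\nabla v_\eps|}\rightharpoonup\nabla v$ in $L^2(\Omega\times(0,T))$. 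Our plan is to write the difference as $\frac{\eps|\nabla v_\eps|\nabla v_\eps}{1+\eps|\nabla v_\eps|}$ and note that its $L^1$ norm is at most $\eps\io\!\int|\nabla v_\eps|^2\le C\eps\to0$. Combined with its $L^2$-boundedness, this gives weak $L^2$ convergence to zero. Weighting by $u_\eps$ then uses the bound on $u_\eps|\nabla v_\eps|^2$ in $L^1$. The estimate \eqref{Beschränktheit der Lösungen des Grenzsystems im Thm} follows by lower semicontinuity.
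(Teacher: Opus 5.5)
\textbf{The gap: your $\eps$-independent a priori estimates are circular.} Most of your outline tracks the paper. You use the same flux-limited approximation. You run an Alikakos--Moser iteration for $v_\eps$ in which the anisotropic term has a good sign and $m_1>\frac n2$ supplies the needed integrability. You get an $\eps$-dependent $L^\infty$ bound for $u_\eps$ from $\frac{|\nabla v_\eps|}{1+\eps|\nabla v_\eps|}\le\frac1\eps$, and you use Aubin--Lions. Your treatment of the limiter is also correct. Bounding the weighted difference by $\eps\Vert\nabla\varphi\Vert_{L^\infty}\int_0^T\io u_\eps|\nabla v_\eps|^2$ is even slightly more direct than the paper's route via $\frac{1}{1+\eps|\nabla v_\eps|}\to1$ in $L^2$.

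The problem is the order of the estimates. Your Moser iteration for $v_\eps$ takes as input that $u_\eps$ is bounded in $L^{m_1}(\Omega)$ uniformly in time. Yet you obtain $\sup_t\Vert u_\eps\Vert_{L^{m_1}(\Omega)}\le C$, together with the spacetime bounds on $\nabla v_\eps$ and $u_\eps|\nabla v_\eps|^2$, only ``with $\Vert v_\eps\Vert_{L^\infty}$ bounded''. Your energy paragraph only explains how the cross term is absorbed. It never states what it yields, and it does not say how the estimate becomes uniform in time. A Gronwall argument by itself only gives bounds that grow in time, because the $u$-equation has no damping term. The theorem, however, asserts a bound for almost all $t\in(0,\infty)$.

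\textbf{The missing step: close the coupled $L^{m_1}$ functional first.} Set $y_\eps=\frac1{m_1}\io u_\eps^{m_1}+\frac1{m_1}\io(v_\eps+1)^{m_1}$. The paper closes this without any $L^\infty$ information on $v_\eps$, in four steps.
\begin{enumerate}
\item Test the two equations with $u_\eps^{m_1-1}$ and $(v_\eps+1)^{m_1-1}$.
\item Absorb the cross term into $(m_1-1)\io(v_\eps+1)^{m_1-2}u_\eps|\nabla v_\eps|^2\ge(m_1-1)\io u_\eps|\nabla v_\eps|^2$. In this estimate, $m_1\ge2$ enters through $(v_\eps+1)^{m_1-2}\ge1$.
\item Split the production term $\io(v_\eps+1)^{m_1-1}u_\eps$ by Young into multiples of $\io u_\eps^{m_1}$ and $\io(v_\eps+1)^{m_1}$.
\item Absorb these zeroth-order terms, together with a multiple of $y_\eps$ itself, into the dissipation $\io|\nabla u_\eps^{m_1/2}|^2$ and $\io|\nabla(v_\eps+1)^{m_1/2}|^2$. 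The tool is the Ehrling-type inequality $\Vert\varphi\Vert_{L^2}^2\le\eta\Vert\nabla\varphi\Vert_{L^2}^2+C(\eta)\Vert\varphi\Vert_{L^{2/m_1}}^2$. The last norms are then controlled by the conserved mass of $u_\eps$ and the bounded mass of $v_\eps$.
\end{enumerate}
This gives $y_\eps'+y_\eps\le C$ uniformly in $\eps$ and $t$. Only after this step can the $v_\eps$-iteration run. The spacetime bounds then follow by testing with $v_\eps$ and with $u_\eps$, using $\io u_\eps^2\le C$, which comes from $m_1\ge2$.
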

Using the trick with $w=u+v$ does not help anymore here.
The cross-diffusion term would again be cancelled,
but due to the nonlinearity, the diffusion terms cannot be combined into a similar diffusion operator applied to
$w$ only.
While global existence and boundedness was shown directly by applying semigroup estimates to the
$w$-equation in \cite{tao2025suppression} or
using the approximation $\frac{u_\eps}{1+\eps u_\eps}$ in the case of arbitrary, positive
diffusion coefficients \cite{tao2026preventinglpblowuplocal},
respectively, we introduce the following approximating system to obtain global existence of weak solutions
in the sense of Definition \ref{schwache Lösungsdefinition für Grenzsystem}:
\begin{equation}\label{eps>0-System}
 \begin{cases}
  u_{\varepsilon t}=\nabla\cdot((u_\varepsilon+1)^{m_1-1}\nabla u_\varepsilon)
  -\nabla\cdot(u_\varepsilon\frac{\nabla v_\varepsilon}
  {1+\varepsilon\vert\nabla v_\varepsilon\vert})&\quad\text{in }\Omega\times(0,T_{\max}),\\
  v_{\varepsilon t}=\nabla\cdot((v_\varepsilon+1)^{m_2-1}\nabla v_\varepsilon)+\nabla\cdot(u_\varepsilon\nabla v_\varepsilon)
  -v_\varepsilon+u_\varepsilon&\quad\text{in }\Omega\times(0,T_{\max}),\\
  \partial_\nu u_\varepsilon=\partial_\nu v_\varepsilon=0&\quad\text{on }\partial\Omega\times(0,T_{\max}),\\
  u_\varepsilon(x,0)=u_{0,\eps}(x),\:v_\varepsilon(x,0)=v_{0,\eps}(x)&\quad x\in\Omega,
 \end{cases}
\end{equation}
for $\varepsilon>0$ and let
\begin{align}\label{Voraussetzungen an die approximierenden Anfangsdaten}
 \begin{split}
  &\alpha\in(0,1),\\
  &u_{0,\eps}\in C^{2+\alpha}(\overline{\Omega})\quad\text{be nonnegative},\\
  &\partial_\nu u_{0,\eps}=0\quad\text{on }\partial\Omega,\\
  &u_{0,\eps}\to u_0\quad\text{ in }L^2(\Omega)\text{ as }\eps\searrow 0,\\
  &v_{0,\eps}\in C^{2+\alpha}(\overline{\Omega})\quad\text{be nonnegative},\\
  &\partial_\nu v_{0,\eps}=0\quad\text{on }\partial\Omega,\\
  &v_{0,\eps}\to v_0\quad\text{ in }C^0(\overline{\Omega})\text{ as }\eps\searrow 0.
 \end{split}
\end{align}
The reason why we choose a different approximation than in \cite{tao2026preventinglpblowuplocal} is that, as the
computation in Lemma \ref{globale Existenz ueps} shows, it is necessary to weaken the effect of
$\nabla v_\eps$ in the cross-diffusion term. This is more important here than in the classical Keller-Segel
system \eqref{klassisches Keller-Segel System}, because the second equation has less favourable regularization
properties.
Using terms including $\nabla v_\eps$ in the denominator of the cross-diffusive term
also appears in flux limitation models \cite{bellomo2017degenerate, Mizukami2019}.
For the solutions $(u_\eps,v_\eps)$ we will show local existence of classical solutions and
an extensibility criterion. Since we deal with nonlinear diffusion and with a different approximation than
in \cite{tao2026preventinglpblowuplocal}, we will go into more detail in the proof of
Lemma \ref{lokale Existenz klassischer Lösungen für eps>0}.
After that, we can even show global existence and boundedness of solutions, where
the bound for $u_\eps$ depends on $\eps$.
Then our aim is to show that weak solutions of \eqref{Poröse Medien mit nichtlinearer Diffusion}
in the sense
of Definition \ref{schwache Lösungsdefinition für Grenzsystem} are limits of
$(u_\eps,v_\eps)$ as $\eps\searrow 0$.
The main challenge during the computation will be the weak convergence of
$\frac{\nabla v_\eps}{1+\eps\vert\nabla v_\eps\vert}$ to $\nabla v$ in $L^2(\Omega\times(0,T))$.
To be more precise, we have to show $\frac{1}{1+\eps\vert\nabla v_\eps\vert}\to 1$ in $L^2(\Omega\times(0,T))$.
In combination with additional convergence results, which will be proven in
Lemma \ref{schwache Konvergenzen von u v und Gradienten} and Lemma \ref{Normkonvergenzen von ueps und veps},
we obtain the result stated in Theorem \ref{Thm globale Existenz}.

\section{Local existence for the approximating system}
At the beginning, let us state a local existence and uniqueness result for classcial solutions
of system \eqref{eps>0-System}. Since we want to weaken the effect of $\nabla v_\eps$ in the cross-diffusion
term in the first equation of \eqref{eps>0-System}, we use the approximation
$\frac{\nabla v_\eps}{1+\eps\vert\nabla v_\eps\vert}$.
In combination with nonlinear diffusion in both equations,
the situation is more complex than in \cite{tao2025suppression} and \cite{tao2026preventinglpblowuplocal}.
Therefore, we will go more into detail here.
\begin{lem}\label{lokale Existenz klassischer Lösungen für eps>0}
 Let $\Omega\subseteq\R^n$, $n\in\N$, be a smooth bounded domain,
 $\eps>0$, $m_1,m_2\geq 1$.
 Then there exists $T_{max,\eps}\in(0,\infty]$ such that system
 \eqref{eps>0-System} permits a classical solution
 \begin{align*}
  (u_\eps,v_\eps)\in\left( C^0(\overline{\Omega}\times[0,T_{max,\eps}))
 \cap C^{2,1}(\overline{\Omega}\times(0,T_{max,\eps}))\right)^2.
 \end{align*}
 If $T_{max,\eps}<\infty$, then
 \begin{align*}
  \lim_{t\nearrow T_{max}}\left(\Vert u_\eps(\cdot,t)\Vert_{C^{1+\alpha}(\overline{\Omega})}
  +\Vert v_\eps(\cdot,t)\Vert_{C^{2+\alpha}(\overline{\Omega})}\right)=\infty.
 \end{align*}
\end{lem}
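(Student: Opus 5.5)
The approach is a Schauder fixed point argument in a space of Hölder continuous functions, which works because the approximating system \eqref{eps>0-System} is triangular in a suitable sense. Fix $\eps>0$ and initial data as in \eqref{Voraussetzungen an die approximierenden Anfangsdaten}, and let $T\in(0,1)$ and $R>0$ be small and large parameters to be chosen. On
\begin{align*}
 X:=\Big\{(\bar u,\bar v)\in C^{\beta,\beta/2}(\overline{\Omega}\times[0,T])\times C^{1+\beta,(1+\beta)/2}(\overline{\Omega}\times[0,T]):\ \Vert \bar u\Vert+\Vert\bar v\Vert\le R,\ \bar u\ge -1/2,\ \bar v\ge -1/2\Big\},
\end{align*}
with some $\beta\in(0,\alpha)$, I define a map $\Phi(\bar u,\bar v)=(u,v)$ by solving two linear problems in sequence.

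First, $v$ solves the linear Neumann problem
\begin{align*}
v_t=\nabla\cdot\big(((\bar v_++1)^{m_2-1}+\bar u_+)\nabla v\big)-v+\bar u,\qquad v(\cdot,0)=v_{0,\eps}.
\end{align*}
Here the anisotropic term $\nabla\cdot(u\nabla v)$ has been absorbed into the principal part. This is the key structural point: for nonnegative $u$ it only increases the diffusivity, so the $v$-equation stays uniformly parabolic with Hölder coefficients. Linear Schauder theory (Ladyzhenskaya--Solonnikov--Ural'tseva, divergence form with Hölder coefficients) then gives $v\in C^{1+\beta',(1+\beta')/2}$ for some $\beta'>\beta$, with a bound depending on $R$.

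Second, $u$ solves
\begin{align*}
u_t=\nabla\cdot\big((\bar u_++1)^{m_1-1}\nabla u\big)-\nabla\cdot\Big(u\frac{\nabla v}{1+\eps|\nabla v|}\Big).
\end{align*}
The drift $\frac{\nabla v}{1+\eps|\nabla v|}$ is Hölder continuous and bounded by $1/\eps$. This is exactly where the flux limitation helps, and the bound depends only on $\eps$, not on $R$.

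Standard parabolic Hölder estimates give $u\in C^{\beta',\beta'/2}$. Smallness of $T$, via the gain $\beta'>\beta$ and interpolation with the time Hölder seminorms, shows that $\Phi$ maps $X$ into itself and is compact. Continuity follows from uniqueness of the linear problems, and Schauder's theorem yields a fixed point.

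Next comes regularity and sign. The comparison principle gives $u,v\ge0$, since constant zero is a subsolution of each equation, as the source $u\ge0$ in the $v$-equation. Hence the truncations are inactive. Bootstrapping then gives classical regularity: the coefficients become $C^{1+\beta}$ in $x$, and the nondivergence Schauder estimates, using the compatibility conditions $\partial_\nu u_{0,\eps}=\partial_\nu v_{0,\eps}=0$ and $C^{2+\alpha}$ data, yield $u,v\in C^{2+\alpha,1+\alpha/2}$ near $t=0$. Uniqueness follows by a Gronwall estimate for differences of the two solutions in $L^2$, tested with the difference, using boundedness of the $C^1$ norms. This justifies defining $T_{max,\eps}$ as the supremum of existence times.

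For the extensibility criterion, the existence time $T$ produced above depends only on upper bounds for $\Vert u(\cdot,t_0)\Vert_{C^{1+\alpha}}$ and $\Vert v(\cdot,t_0)\Vert_{C^{2+\alpha}}$ at a restart time $t_0$. Hence, if these norms stayed bounded along a sequence approaching $T_{max,\eps}<\infty$, the solution could be extended, a contradiction. Getting a liminf-type statement upgraded to the stated limit needs care: I would show that a bound on these norms on $[t_0,t_0+\tau]$ propagates, or simply argue that the norms cannot be bounded on any sequence, which by the uniform restart time gives the full limit.

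The main obstacle is the restart step. At time $t_0$ the data are only $u\in C^{1+\alpha}$, not $C^{2+\alpha}$, so the first-order compatibility holds but the second-order Schauder theory is not directly available. The fix I would try first is to run the restart fixed point in the lower Hölder class above, which requires only $u(t_0)\in C^{\beta}$ and $v(t_0)\in C^{1+\beta}$. Then interior-in-time Schauder smoothing recovers classical regularity for $t>t_0$. I would also check that the $C^{2+\alpha}$ norm of $v$ is needed to control the coefficient $(v+1)^{m_2-1}+u$ in $C^{1+\alpha}$.
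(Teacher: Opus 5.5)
Most of your plan is sound: absorbing $\nabla\cdot(u\nabla v)$ into the diffusivity, the comparison argument for the signs, the bootstrap, and the uniform restart time, which indeed yields the full limit and not only a $\liminf$. However, the step meant to deliver both compactness and the self-map fails as written.

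The diffusivity $(\bar v_++1)^{m_2-1}+\bar u_+$ of your linear $v$-problem is only of class $C^{\beta,\frac{\beta}{2}}$, because $\bar u$ ranges over a ball in $C^{\beta,\frac{\beta}{2}}$. Divergence-form Schauder theory then gives $\nabla v\in C^{\beta,\frac{\beta}{2}}$ and nothing better. The regularity of $\nabla v$ is capped by that of the coefficient: already for $(a v_x)_x=0$ in one dimension one has $v_x=c/a$, exactly as regular as $a$. So there is no $\beta'>\beta$ for the second component, and this has three consequences:
\begin{itemize}
\item the $v$-component of $\Phi(X)$ is merely bounded in $C^{1+\beta,\frac{1+\beta}{2}}(\overline{\Omega}\times[0,T])$, which gives no relative compactness in that same space;
\item there is no exponent gap to trade for a factor $T^\delta$;
\item the Schauder constant depends on $\Vert\bar u\Vert_{C^{\beta,\frac{\beta}{2}}}\le R$, so demanding $\Vert v\Vert\le R$ with a single radius is in general circular.
\end{itemize}

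The repair is cheap once you look at what your sequential map actually uses. The $u$-problem takes its drift from the \emph{new} $v$, so $\nabla\bar v$ never enters $\Phi$, and $\bar v$ appears only inside the diffusivity. Take $X$ to be a ball of radius $R$ in $C^{\beta,\frac{\beta}{2}}\times C^{\beta,\frac{\beta}{2}}$ instead. Both components of $\Phi(\bar u,\bar v)$ then lie in $C^{1+\beta,\frac{1+\beta}{2}}$ with bounds depending on $R$ and $\eps$, a full derivative of gain. (For $u$, note that $\frac{\nabla v}{1+\eps\vert\nabla v\vert}\cdot\nu=0$ on $\partial\Omega$, so $\partial_\nu u=0$ is the conormal condition.) Since $u-u_{0,\eps}$ and $v-v_{0,\eps}$ vanish at $t=0$, their $C^{\beta,\frac{\beta}{2}}$-norms are at most $C(R,\eps)T^\delta$. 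Choosing $R:=\Vert u_{0,\eps}\Vert_{C^\beta(\overline{\Omega})}+\Vert v_{0,\eps}\Vert_{C^\beta(\overline{\Omega})}+1$ and $T$ small gives the self-map, and Arzel\`a--Ascoli gives compactness.

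For comparison, the paper obtains compactness from an exponent gap instead. Its $S_T$ is a ball in the stronger $C^{1+\alpha,\frac{1+\alpha}{2}}\times C^{2+\alpha,\frac{2+\alpha}{2}}$ norms inside $X=C^{1+\beta,\frac{1+\beta}{2}}\times C^{2+\beta,\frac{2+\beta}{2}}$ with $\beta<\alpha$. It freezes both equations simultaneously and treats the $v$-equation in nondivergence form, which is why it needs $\tilde u\in C^{1+\alpha}$.

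Two smaller remarks. In your uniqueness argument, weight the $L^2$-energy of $V=v_1-v_2$ by a large constant, since the drift difference produces $\int_\Omega u_1\vert\nabla(u_1-u_2)\vert\vert\nabla V\vert$ with a possibly large coefficient. Your restart concern is legitimate and is glossed over in the paper. In the repaired scheme, however, the available data $u(\cdot,t_0)\in C^{1+\alpha}$ and $v(\cdot,t_0)\in C^{2+\alpha}$, with vanishing normal derivatives, are more than enough.
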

\begin{proof}
 \textit{Existence}. Let $\eps>0$, $\beta\in(0,\alpha)$ with $\alpha\in(0,1)$ taken
 from the initial data of \eqref{Voraussetzungen an die approximierenden Anfangsdaten} and let
 \begin{align*}
  &R_1:=2C_1\Vert u_{0,\eps}\Vert_{C^{1+\alpha}(\overline{\Omega})}+1,\\
  &R_2:=2C_2\Vert v_{0,\eps}\Vert_{C^{2+\alpha}(\overline{\Omega})}+C_2R_1+1
 \end{align*}
 where $C_1,C_2>0$ are the constants appearing in the regularity estimates of
 \cite[Theorem 6.49]{Lieberman1996} or
\cite[Theorem 4.28]{Lieberman1996}, respectively,
 and $T\in(0,\min\{C_1^{-\frac{2}{\alpha}},C_2^{-\frac{2}{\alpha}}\})$.
 Then in the Banach space
 $X:=(X_1,X_2)=C^{1+\beta,\frac{1+\beta}{2}}(\overline{\Omega}\times[0,T])
 \times C^{2+\beta,\frac{2+\beta}{2}}(\overline{\Omega}\times[0,T])$,
 we define the set
 \begin{align*}
  S_T:=\{ (\tilde{u}_\eps,\tilde{v}_\eps)\in X\mid \tilde{u}_\eps,
  \tilde{v}_\eps\geq 0\text{ in }\overline{\Omega}\times[0,T),\:
  \Vert\tilde{u}_\eps\Vert_{C^{1+\alpha,\frac{1+\alpha}{2}}}\leq R_1,
  \Vert\tilde{v}_\eps\Vert_{C^{2+\alpha,\frac{2+\alpha}{2}}}\leq R_2\}.
 \end{align*}
 Then $S_T$ is obviously nonempty, convex and closed.
 Moreover, we define an operator $\Phi:S_T\to X,\:(\tilde{u}_\eps,\tilde{v}_\eps)\mapsto(u_\eps,v_\eps)$,
 where $(u_\eps,v_\eps)$ solves
\begin{subequations}\label{Lineariserte Gleichung für ueps in Ex}
\begin{empheq}[left=\empheqlbrace]{align}
&u_{\eps t}
=\nabla\cdot((\tilde{u}_\eps+1)^{m_1-1}\nabla u_\eps)
-\nabla\cdot\left(\frac{u_\eps\nabla\tilde{v}_\eps}
{1+\eps\vert\nabla\tilde{v}_\eps\vert}\right)
&&\text{in }\Omega\times(0,T),\label{Gleichung für ueps Ex}\\
&v_{\eps t}
=\nabla\cdot((\tilde{v}_\eps+1)^{m_2-1}\nabla v_\eps)
+\nabla\cdot(\tilde{u}_\eps\nabla v_\eps)
-v_\eps+\tilde{u}_\eps
&&\text{in }\Omega\times(0,T),\label{Gleichung für veps Ex}\\
&\partial_\nu u_\eps
=\partial_\nu v_\eps=0
&&\text{on }\partial\Omega\times(0,T),\label{Randbedingungen ueps veps Ex}\\
&u_\eps(x,0)
=u_0(x),\: v_\eps(x,0)=v_0(x)
&&\text{in }\Omega\times\{0\}.\label{Anfangsdaten Ex}
\end{empheq}
\end{subequations}
 The existence of solutions $u_\eps$ to \eqref{Gleichung für ueps Ex} and $v_\eps$ to
 \eqref{Gleichung für veps Ex} is guaranteed by \cite[Theorem IV.5.3]{LSU}.
 Our aim is to show that $\Phi$ has a fixed point.
In order to verify structural conditions in parabolic regularity theory results, we now define
\begin{align*}
 &A_u(x,t,z,p):=(\tilde{u}_\eps(x,t)+1)^{m_1-1}\cdot p
 -\frac{z\nabla\tilde{v}_\eps(x,t)}{1+\eps\vert\nabla\tilde{v}_\eps(x,t)\vert},\quad
 B_u(x,t,z,p):=0,
\end{align*}
where $x\in\Omega$, $t\in(0,T)$, $\vert z\vert\leq R_1$, $p\in\R^n$. Then we have
\begin{align*}
 \vert A_u(x,t,z,0)\vert\leq R_1R_2,
\end{align*}
for all $(\tilde{u}_\eps,\tilde{v}_\eps)\in S_T$, $x\in\Omega$, $t\in(0,T)$ and $z\in[-R_1,R_1]$.
By definition of $S_T$ we find $C_3,C_4>0$ such that for all $(\tilde{u}_\eps,\tilde{v}_\eps)\in S_T$,
$x,y\in\Omega$, $t\in(0,T)$, $z,w\in[-R_1,R_1]$ and $p\in\R^n$
\begin{align*}
 &\vert A_u(x,t,z,p)-A_u(y,t,w,p)\vert\\
 =&\vert(\tilde{u}_\eps(x,t)+1)^{m_1-1}\cdot p
 -\frac{z\nabla\tilde{v}_\eps(x,t)}{1+\eps\vert\nabla\tilde{v}_\eps(x,t)\vert}
 -(\tilde{u}_\eps(y,t)+1)^{m_2-1}\cdot p
 +\frac{w\nabla\tilde{v}_\eps(y,t)}{1+\eps\vert\nabla\tilde{v}_\eps(y,t)\vert}\vert\\
 \leq&\vert p\vert\vert(\tilde{u}_\eps(x,t)+1)^{m_1-1}-(\tilde{u}_\eps(y,t)+1)^{m_1-1}\vert
 +\vert w-z\vert\cdot\frac{\vert\nabla\tilde{v}_\eps(y,t)\vert}{1+\eps\vert\nabla\tilde{v}_\eps(y,t)\vert}\\
 &+\vert z\vert\left\vert \frac{\nabla\tilde{v}_\eps(y,t)}{1+\eps\vert\nabla\tilde{v}_\eps(y,t)\vert}-
 \frac{\nabla\tilde{v}_\eps(x,t)}{1+\eps\vert\nabla\tilde{v}_\eps(x,t)\vert}\right\vert\\
 \leq&C_3\vert p\vert\cdot\vert x-y\vert^\alpha+R_2(2R_1)^{1-\alpha}\vert w-z\vert^\alpha+C_4R_1\eps\vert x-y\vert^\alpha\\
 \leq&\max\{ C_3+C_4R_1\eps,R_2(2R_1)^{1-\alpha} \}(1+\vert p\vert)(\vert x-y\vert^\alpha+\vert w-z\vert^\alpha),
\end{align*}
holds true.
For the same reason $C_3,C_4>0$ satisfy
\begin{align*}
 &\vert A_u(x,t,z,p)-A_u(x,s,z,p)\vert\\
 =&\vert(\tilde{u}_\eps(x,t)+1)^{m_1-1}\cdot p-(\tilde{u}_\eps(x,s)+1)^{m_1-1}\cdot p
 -\frac{z\nabla\tilde{v}_\eps(x,t)}{1+\eps\vert\nabla\tilde{v}_\eps(x,t)\vert}
 +\frac{z\nabla\tilde{v}(x,s)}{1+\eps\vert\nabla\tilde{v}(x,s)\vert}\vert\\
 \leq&C_3\vert p\vert\vert t-s\vert^\frac{\alpha}{2}\vert+C_4M\eps\vert t-s\vert^\frac{\alpha}{2}\\
 \leq&\max\{C_3,C_4M\eps\}(1+\vert p\vert)\vert t-s\vert^\frac{\alpha}{2},
\end{align*}
for all $s,t\in(0,T)$.
Since, $B=0$, it holds $\vert B_u(x,t,z,p)\vert\leq(1+\vert p\vert^2)$
for all $(\tilde{u}_\eps,\tilde{v}_\eps)\in S_T$ and
$(x,t,z,p)\in\Omega\times(0,T)\times\R\times\R^n$.
Now let us define $a^{ij}:=\frac{\partial A_u}{\partial p}=(\tilde{u}_\eps+1)^{m_1-1}I$. Then,
 $\vert a^{ij}(x,t,z,p)-a^{ij}(x,t,z,q)\vert=0$
for all $p,q\in\R^n$. Moreover,
\begin{align*}
 a^{ij}(x,t,z,p)\xi_i\xi_j\geq\vert\xi\vert^2,\quad\vert a^{ij}(x,t,z,p)\vert\leq(R_1+1)^{m_1-1}
\end{align*}
for all $(\tilde{u}_\eps,\tilde{v}_\eps)\in S_T$, $(x,t)\in\Omega\times(0,T)$, $z\in[-R_1,R_1]$ and
$p,\xi\in\R^n$.
By definition of $S_T$, we find $C_5,C_6>0$ such that
for all $(\tilde{u}_\eps,\tilde{v}_\eps)\in S_T$,
$x\in\Omega$, $s,t\in[0,T]$, $\vert z\vert\leq R_1$ and $\vert p\vert\leq N$
\begin{align*}
 &\vert A_u(x,t,z,p)-A_u(x,s,z,p)\vert
 \leq C_5\vert p\vert\vert t-s\vert^\frac{\alpha}{2}+C_6R_1\eps\vert t-s\vert^\frac{\alpha}{2}\\
 =&(C_5\vert p\vert+C_6R_1\eps)\vert t-s\vert^\frac{\alpha}{2}
 \leq(C_5N+C_6R_1\eps)\vert t-s\vert^\frac{\alpha}{2}.
\end{align*}
Hence, by \cite[Theorem 1.1, Theorem 1.2]{MR932759} there is $C_7>0$
such that for all $(\tilde{u}_\eps,\tilde{v}_\eps)\in S_T$
\begin{align}\label{ueps beschränkt in C1+alpha}
\Vert u_\eps\Vert_{C^{1+\alpha,\frac{1+\alpha}{2}}(\overline{\Omega}\times[0,T])}\leq C_7.
\end{align}
Since \eqref{Gleichung für ueps Ex} is linear in $u_\eps$,
we can apply the comparison theorem of \cite[Theorem B.1]{Lankeit_Lankeit_2019} to conclude
that $u_\eps\geq 0$ in $\Omega\times(0,T)$. Moreover, by \cite[Theorem 6.49]{Lieberman1996} we find $C_1>0$ such that
\begin{align*}
 \Vert u_\eps\Vert_{C^{1+\alpha,\frac{1+\alpha}{2}}(\overline{\Omega})}\leq
 &C_1(\Vert u_\eps\Vert_{C^0(\overline{\Omega}\times[0,T])}+\Vert u_{0,\eps}\Vert_{C^{1+\alpha}(\overline{\Omega})})\\
 \leq&C_1(\sup_{t\in(0,T)}\Vert u_\eps(\cdot,t)\Vert_{C^0(\overline{\Omega}\times[0,T])}
 +\Vert u_{0,\eps}\Vert_{C^\alpha(\overline{\Omega})}
 +\Vert u_{0,\eps}\Vert_{C^{1+\alpha}(\overline{\Omega})})\\
 \leq&C_1(t^\frac{\alpha}{2}+\Vert u_{0,\eps}\Vert_{C^0(\overline{\Omega})}
 +\Vert u_{0,\eps}\Vert_{C^{1+\alpha}(\overline{\Omega})})\\
 \leq&1+2C_1\Vert u_{0,\eps}\Vert_{C^{1+\alpha}(\overline{\Omega})}\\
 =& R_1,
\end{align*}
since $t<T<C_1^{-\frac{2}{\alpha}}$.
Since all coefficients in \eqref{Gleichung für ueps Ex} are Hölder continuous, it holds
$u_\eps\in C^{2+\theta_2,1+\frac{\theta_2}{2}}(\overline{\Omega}\times(0,T))$
for some $\theta_2\in(0,1)$ by \cite[Theorem 4.28]{Lieberman1996}.\\
For the second solution component, by \cite[Theorem 4.28]{Lieberman1996} there is $C_2>0$ such that
\begin{align}\label{v beschränkt in C2+alpha}
\begin{split}
 \Vert v_\eps\Vert_{C^{2+\alpha,\frac{2+\alpha}{2}}(\overline{\Omega}\times[0,T])}
 \leq&C_2(\Vert v_\eps\Vert_{C^0(\overline{\Omega}\times[0,T])}
 +\Vert v_{0,\eps}\Vert_{C^{2+\alpha}(\overline{\Omega})}
 +\Vert\tilde{u}_\eps\Vert_{C^{\alpha,\frac{\alpha}{2}}(\overline{\Omega}\times[0,T])})\\
 \leq&C_2(\sup_{t\in(0,T)}\Vert v_\eps(\cdot,t)\Vert_{C^0(\overline{\Omega})}
 +\Vert v_{0,\eps}\Vert_{C^{2+\alpha}(\overline{\Omega})}+R_1)\\
 \leq& C_2\sup_{t\in(0,T)}t^{\frac{\alpha}{2}}+2C_2\Vert v_{0,\eps}\Vert_{C^{2+\alpha}(\overline{\Omega})}
 +C_2R_1\\
 \leq&C_2T^{\frac{\alpha}{2}}+2C_2\Vert v_{0,\eps}\Vert_{C^{2+\alpha}(\overline{\Omega})}
 +C_2R_1\\
 \leq&1+2C_2\Vert v_{0,\eps}\Vert_{C^{2+\alpha}(\overline{\Omega})}+C_2R_1\\
 =& R_2,
\end{split}
\end{align}
since $T<C_2^{-\frac{2}{\alpha}}$. Hence, $v_\eps\in X_2$. Moreover,
by the comparison theorem of \cite[Theorem B.1]{Lankeit_Lankeit_2019} again,
we can conclude $v_\eps\geq 0$ in $\Omega\times(0,T)$. Hence, $(u_\eps,v_\eps)\in S_T$.
Hence, we have a pair $(u_\eps,v_\eps)$ of
classical solutions to \eqref{Lineariserte Gleichung für ueps in Ex}.\\
Our aim is to show that $\Phi$ has a fixed point by Schauder's fixed point theorem.
Since $\Phi(S_T)\subseteq S_T$, it remains to show that $\Phi$ is continuous and compact.
For continuity of $\Phi$,
let $(\tilde{u}_{\eps,1},\tilde{v}_{\eps,1}),(\tilde{u}_{\eps,2},\tilde{v}_{\eps,2})\in S_T$.
Moreover, let
$\tilde{U}_\eps:=\tilde{u}_{\eps,1}-\tilde{u}_{\eps,2}$, $\tilde{V}_\eps:=\tilde{v}_{\eps,1}-\tilde{v}_{\eps,2}$
and $(U_\eps,V_\eps)\in S_T$ such that $\Phi(\tilde{U}_\eps,\tilde{V}_\eps)=(U_\eps,V_\eps)$.
Then $(U_\eps,V_\eps)$ solves
\begin{equation*}
 \begin{cases}
  U_{\eps t}=\nabla\cdot (a_U\nabla U_\eps+b_U U_\eps)+\nabla\cdot F_U,\\
  V_{\eps t}= A_V+F_V,\\
  U_\eps(\cdot,0)=0, V_\eps(\cdot,0)=0.
 \end{cases}
\end{equation*}
where
\begin{align*}
 a_U:=&(\tilde{u}_\eps+1)^{m_1-1},\quad
 b_U:=-\frac{\nabla\tilde{v}_{\eps,2}}{1+\eps\vert\nabla\tilde{v}_{\eps,2}\vert},\\
 F_U:=&((\tilde{u}_{\eps,1}+1)^{m_1-1}-(\tilde{u}_{\eps,2}+1)^{m_1-1})\nabla u_{\eps,2}
  -\frac{u_{\eps,1}\nabla \tilde{V}_\eps}{1+\eps\vert\nabla\tilde{v}_{\eps,1}\vert}
  -(u_{\eps,1}\nabla\tilde{v}_{\eps,2})\cdot\left( \frac{1}{1+\eps\vert\nabla \tilde{v}_{\eps,1}\vert}
  -\frac{1}{1+\eps\vert\nabla \tilde{v}_{\eps,2}\vert} \right),\\\\
 A_V:=&\nabla\cdot((\tilde{v}_\eps+1)^{m_1-1}\nabla V_\eps+\tilde{u}_{\eps,2}\nabla V_\eps)-V_\eps,\\\\
 F_V:=&((\tilde{v}_{\eps,1}+1)^{m_2-1}-(\tilde{v}_{\eps,2}+1)^{m_2-1})\nabla v_{\eps,2}
 +\tilde{U}_\eps\nabla v_{\eps,1}+\tilde{U}_\eps.
\end{align*}
Then by \cite[Theorem 6.49]{Lieberman1996} with $LU:=-U_{\eps t}+\nabla\cdot(A_U)$, we find $C_{8}>0$
and due to
$\tilde{U}_\eps,\tilde{V}_\eps\in C^{1+\beta,\frac{1+\beta}{2}}(\overline{\Omega}\times[0,T])$ we find
constants $C_{9},C_{10},C_{11}>0$ such that
\begin{align*}
 &\Vert U_\eps\Vert_{C^{1+\beta,\frac{1+\beta}{2}}(\overline{\Omega}\times[0,T])}
 \leq C_{8}\Vert F_U\Vert_{C^{\beta,\frac{\beta}{2}}(\overline{\Omega}\times[0,T])}\\
 \leq& C_{9}\Vert \tilde{U}_\eps\Vert_{C^{\beta,\frac{\beta}{2}}(\overline{\Omega}\times[0,T])}
 +(C_{10}+\eps C_{11})\Vert\tilde{V}_\eps\Vert_{C^{1+\beta,\frac{1+\beta}{2}}(\overline{\Omega}\times[0,T])}\\
 \leq&C_{9}\Vert\tilde{U}_\eps\Vert_{C^{1+\beta,\frac{1+\beta}{2}}(\overline{\Omega}\times[0,T])}
 +(C_{10}+\eps C_{11})\Vert\tilde{V}_\eps\Vert_{C^{2+\beta,\frac{2+\beta}{2}}(\overline{\Omega}\times[0,T])},
\end{align*}
and by \cite[Theorem IV.5.3]{LSU} there is $C_{12}>0$ and due to
$\tilde{U}_\eps,\tilde{V}_\eps\in C^{1+\beta,\frac{1+\beta}{2}}(\overline{\Omega}\times[0,T])$, there are
$C_{13},C_{14}>0$ such that
\begin{align*}
 &\Vert V_\eps\Vert_{C^{2+\beta,\frac{2+\beta}{2}}(\overline{\Omega}\times[0,T])}
 \leq C_{12}\Vert F_V\Vert_{C^{\beta,\frac{\beta}{2}}(\overline{\Omega}\times[0,T])}\\
 \leq&C_{13}\Vert\tilde{V}_\eps\Vert_{C^{\beta,\frac{\beta}{2}}(\overline{\Omega}\times[0,T])}
 +C_{14}\Vert\tilde{U}_\eps\Vert_{C^{\beta,\frac{\beta}{2}}(\overline{\Omega}\times[0,T])}\\
 \leq&C_{13}\Vert\tilde{V}_\eps\Vert_{C^{2+\beta,\frac{2+\beta}{2}}(\overline{\Omega}\times[0,T])}
 +C_{14}\Vert\tilde{U}_\eps\Vert_{C^{1+\beta,\frac{1+\beta}{2}}(\overline{\Omega}\times[0,T])}
 \end{align*}
Thus, we find $C_{15}, C_{16}(\eps)>0$ such that
\begin{align*}
 &\Vert\Phi((\tilde{u}_{\eps,1},\tilde{v}_{\eps,1}))-\Phi((\tilde{u}_{\eps,2},\tilde{v}_{\eps,2}))\Vert^2_X
 =\Vert(U_\eps,V_\eps)\Vert_X^2\\
 \leq& C_{15}\Vert \tilde{U}_\eps\Vert^2_{C^{1+\beta,\frac{1+\beta}{2}}(\overline{\Omega}\times[0,T])}+
  C_{16}(\eps)\Vert\tilde{V}_\eps\Vert^2_{C^{2+\beta,\frac{2+\beta}{2}}(\overline{\Omega}\times[0,T])}\\
  \leq&\max\{C_{15},C_{16}(\eps)\}\Vert(\tilde{U}_\eps,\tilde{V}_\eps)\Vert^2_X.
\end{align*}
Hence, $\Phi$ is continuous. Due to \eqref{ueps beschränkt in C1+alpha}, \eqref{v beschränkt in C2+alpha}
and  $\alpha>\beta$, compactness of $\Phi$ follows by the Arzel\`{a}-Ascoli theorem.
Hence, by Schauder's fixed point theorem there is a pair $(u_\eps,v_\eps)\in S_T$
such that $\Phi((u_\eps,v_\eps))=(u_\eps,v_\eps)$.\\
\textit{Extensibility criterion.} Let $Z:=\{ T\in(0,\infty)\mid \exists
\text{ classcial solution }(u_{\eps},v_\eps)\in
 C^{2,1}(\overline{\Omega}\times(0,T))\cap C^0(\overline{\Omega}\times[0,T))\}$.
Assume $\tilde{T}:=T_{max,\eps}=\sup Z<\infty$
and that there is a constant $C_{17}>0$ such that
\begin{align*}
 \limsup_{t\nearrow T_{max,\eps}}(\Vert u_\eps(\cdot,t)\Vert_{C^{1+\alpha}(\overline{\Omega})}
 +\Vert v_\eps(\cdot,t)\Vert_{C^{2+\alpha}(\overline{\Omega})})\leq C_{17}.
\end{align*}
Due to the local existence, we find $\tau=\tau(C_{17})>0$ such that
for all $(u_{0,\eps},v_{0,\eps})\in\left( C^{2+\alpha}(\overline{\Omega})\right)^2$ that have
the property
\begin{align*}
 \Vert u_{0,\eps}\Vert_{C^{2+\alpha}(\overline{\Omega})}
 +\Vert v_{0,\eps}\Vert_{C^{2+\alpha}(\overline{\Omega})}\leq C_{17},
\end{align*}
there exists a classical solution $(u_\eps,v_\eps)$ of \eqref{eps>0-System} in $\Omega\times[0,\tau)$.
Furthermore, we find a sequence $(t_k)_{k\in\N}\subseteq(0,\tilde{T})$
such that $t_k\nearrow\tilde{T}$ as $k\to\infty$ and
\begin{align*}
 \Vert u_\eps(\cdot,t_k)\Vert_{C^{1+\alpha}(\overline{\Omega})}
 +\Vert v_\eps(\cdot,t_k)\Vert_{C^{2+\alpha}(\overline{\Omega})}
 \leq C_{17},
\end{align*}
for all $k\in\N$. We define a sequence of initial data by
\begin{align*}
 u_{0_k,\eps}(x):=u_\eps(x,t_k),\quad x\in\Omega.
\end{align*}
Due to $t_k\nearrow\tilde{T}$ there is $\tilde{k}\in\N$ such that
\begin{align*}
 \tilde{T}-t_k<\frac{\tau}{2},
\end{align*}
for all $k\geq\tilde{k}$. If we set $u_{0_{\tilde{k}},\eps}(x):=u_\eps(x,\tilde{T}-\frac{\tau}{2})$,
then the function
\begin{align*}
 \overline{u}_\eps(t):=\left\{\begin{array}{cc}
            u_\eps(t),  &t<\tilde{T}-\frac{\tau}{2}  \\
            u_{\tilde{k},\eps}(t),& t\in[\tilde{T}-\frac{\tau}{2},\tilde{T}+\frac{\tau}{2})
        \end{array} \right.
\end{align*}
is a weak solution to the first equation of \eqref{Poröse Medien mit nichtlinearer Diffusion}.
Due to the regularity results from above (cf. \cite[Theorem 4.28]{Lieberman1996}),
it is also a classcial solution.
Hence, $\tilde{T}+\frac{\tau}{2}\in Z$.
Since $\tilde{T}=T_{max,\eps}$, it follows $\sup Z=\tilde{T}>\tilde{T}+\frac{\tau}{2}\geq
\tilde{T}=\sup Z$. Hence,
\begin{align*}
 \lim_{t\nearrow T_{max}}(\Vert u_\eps(\cdot,t)\Vert_{C^{1+\alpha}(\overline{\Omega})}
 +\Vert v_\eps(\cdot,t)\Vert_{C^{2+\alpha}(\overline{\Omega})})=\infty.
\end{align*}
\end{proof}

We can improve the extensibility criterion due to well-known parabolic regularity theory \cite{MR932759}.

\begin{lem}\label{verbessertes Fortsetzungskriterium}
 Let $\eps>0$ and $(u_\eps,v_\eps)$ be a classcial solution of \eqref{eps>0-System}
 on $\Omega\times(0,T_{max,\eps})$.
 If $T_{max,\eps}<\infty$, then
 \begin{align}\label{Fortsetzungskriterium in Linfty}
  \lim_{t\nearrow T_{max,\eps}}\Vert u_\eps(\cdot,t)\Vert_{L^\infty(\Omega)}=\infty.
 \end{align}
\end{lem}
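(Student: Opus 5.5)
We argue by contradiction. Suppose $T_{max,\eps}<\infty$ but \eqref{Fortsetzungskriterium in Linfty} fails, so that along some sequence, and then (by the bounds below, restricted to $(T_{max,\eps}/2,T_{max,\eps})$) on the whole interval, $\Vert u_\eps(\cdot,t)\Vert_{L^\infty(\Omega)}\leq M$ for some $M>0$. The aim is to bootstrap this to a bound on $\Vert u_\eps(\cdot,t)\Vert_{C^{1+\alpha}(\overline\Omega)}+\Vert v_\eps(\cdot,t)\Vert_{C^{2+\alpha}(\overline\Omega)}$ uniform in $t\in(T_{max,\eps}/2,T_{max,\eps})$. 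That contradicts the extensibility criterion of Lemma \ref{lokale Existenz klassischer Lösungen für eps>0}.

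\textbf{Step 1 ($L^\infty$ bound for $v_\eps$).} Write the $v_\eps$-equation in the linear form $v_{\eps t}=\nabla\cdot(a(x,t)\nabla v_\eps)-v_\eps+u_\eps$ with $a:=(v_\eps+1)^{m_2-1}+u_\eps\geq 1$, which is uniformly parabolic. The comparison principle \cite[Theorem B.1]{Lankeit_Lankeit_2019} against the spatially constant supersolution $\overline v(t)$ solving $\overline v'=-\overline v+M$, $\overline v(0)=\Vert v_{0,\eps}\Vert_{L^\infty}$, gives $0\leq v_\eps\leq \max\{\Vert v_{0,\eps}\Vert_{L^\infty(\Omega)},M\}$.

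\textbf{Step 2 (Hölder bounds).} Both equations are now in divergence form with bounded measurable, uniformly elliptic coefficients and bounded solutions. The key point for $u_\eps$ is that the drift $\frac{\nabla v_\eps}{1+\eps|\nabla v_\eps|}$ is bounded by $1/\eps$, independently of any gradient bound on $v_\eps$. Hence the flux $(u_\eps+1)^{m_1-1}\nabla u_\eps - u_\eps\frac{\nabla v_\eps}{1+\eps|\nabla v_\eps|}$ satisfies the structure conditions of \cite[Theorem 1.3]{MR932759} (Porzio--Vespri), and $u_\eps$ is Hölder continuous on $\overline\Omega\times[T_{max,\eps}/2,T_{max,\eps})$. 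Similarly, the $v_\eps$-equation has bounded source $u_\eps-v_\eps$ and bounded diffusion coefficient $a\in[1,(\Vert v_\eps\Vert_\infty+1)^{m_2-1}+M]$, so $v_\eps$ is Hölder continuous as well.

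\textbf{Step 3 (gradient Hölder bound for $v_\eps$, then for $u_\eps$).} This is the main obstacle, because of the coupling. The coefficient $a$ in the $v_\eps$-equation is now Hölder continuous by Step 2. Applying \cite[Theorem 1.1, 1.2]{MR932759} or the Schauder-type $C^{1+\theta}$ estimate \cite[Theorem 6.49]{Lieberman1996} to the divergence-form equation with Hölder coefficient therefore gives $\nabla v_\eps\in C^{\theta,\theta/2}$. This makes $\frac{\nabla v_\eps}{1+\eps|\nabla v_\eps|}$ Hölder continuous, and the same theorem applied to the $u_\eps$-equation (viewed as linear with Hölder coefficient $(u_\eps+1)^{m_1-1}$ and Hölder drift) yields $u_\eps\in C^{1+\theta,\frac{1+\theta}{2}}$. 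We then iterate. With $u_\eps\in C^{1+\theta}$ and $v_\eps\in C^{1+\theta}$, we expand the $v_\eps$-equation in nondivergence form with Hölder coefficients and Hölder right-hand side involving $\nabla u_\eps\cdot\nabla v_\eps$. The Schauder estimate \cite[Theorem 4.28]{Lieberman1996} then gives $v_\eps\in C^{2+\theta,1+\theta/2}$, which upgrades the drift to $C^{1+\theta}$ and $u_\eps$ to $C^{2+\theta}$ likewise. If $\theta<\alpha$, one more round, or an adjustment of the exponent, delivers the $C^{1+\alpha}$ and $C^{2+\alpha}$ bounds needed. Bounds near $t=T_{max,\eps}/2$ are handled by using interior-in-time estimates away from $0$, on a slightly smaller starting time.

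\textbf{Conclusion.} The resulting uniform bound on $\Vert u_\eps(\cdot,t)\Vert_{C^{1+\alpha}}+\Vert v_\eps(\cdot,t)\Vert_{C^{2+\alpha}}$ as $t\nearrow T_{max,\eps}$ contradicts Lemma \ref{lokale Existenz klassischer Lösungen für eps>0}.
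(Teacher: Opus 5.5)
\textbf{Gap: the opening reduction.} Negating \eqref{Fortsetzungskriterium in Linfty} only gives $\liminf_{t\nearrow T_{max,\eps}}\Vert u_\eps(\cdot,t)\Vert_{L^\infty(\Omega)}<\infty$, i.e.\ $\Vert u_\eps(\cdot,t_k)\Vert_{L^\infty(\Omega)}\leq M$ along some sequence $t_k\nearrow T_{max,\eps}$. You then get a bound on a whole interval ``by the bounds below''. That is circular, because each of Steps 1--3 already assumes $\Vert u_\eps\Vert_{L^\infty}\leq M$ on a full time interval: the comparison function in Step 1 needs $u_\eps\leq M$ for all $t$, and the De Giorgi--Nash and Schauder bounds are estimates on cylinders.

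As written, you therefore only prove $\sup_{t\in(0,T_{max,\eps})}\Vert u_\eps(\cdot,t)\Vert_{L^\infty(\Omega)}=\infty$, which is the $\limsup$ version. To reach the stated $\lim$ you need a forward-in-time estimate starting at $t_k$:
\begin{itemize}
\item test the first equation by $u_\eps^{p-1}$;
\item use $\frac{\vert\nabla v_\eps\vert}{1+\eps\vert\nabla v_\eps\vert}\leq\frac{1}{\eps}$;
\item run an Alikakos--Moser iteration on $(t_k,T_{max,\eps})$, as in Lemma \ref{globale Existenz ueps}.
\end{itemize}
That argument depends only on $\Vert u_\eps(\cdot,t_k)\Vert_{L^\infty(\Omega)}$, $\eps$, $\Omega$ and the mass, so it gives $\Vert u_\eps\Vert_{L^\infty(\Omega\times(t_k,T_{max,\eps}))}\leq C(M,\eps)$. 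After that your Steps 1--3 apply unchanged. The paper's own proof has the same limitation: it only shows that an $L^\infty$ bound on $(0,T)$ controls the Hölder norms. This is harmless there, because Lemma \ref{globale Existenz ueps} bounds $u_\eps$ for all $t$.

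\textbf{Comparison with the paper.} Apart from this, your bootstrap is correct and more careful than the paper's. The paper follows the same chain: $L^\infty$ bound on $u_\eps$, then a $C^{1+\alpha}$ bound on $u_\eps$ via \cite{MR932759}, then a $C^{2+\alpha}$ bound on $v_\eps$ via Schauder theory, then a contradiction with the local extensibility criterion. However, it applies the gradient Hölder estimate to $u_\eps$ with constants depending on $R_1$. This tacitly uses Hölder control of $(u_\eps+1)^{m_1-1}$ and $\nabla v_\eps$ that was only available for the frozen functions in $S_T$, not for the actual solution up to $T_{max,\eps}$. Your ordering supplies exactly what is missing there:
\begin{itemize}
\item an $L^\infty$ bound for $v_\eps$ by comparison;
\item Hölder continuity of $u_\eps$ and $v_\eps$, using only that the drift is bounded by $\frac{1}{\eps}$;
\item $C^{1+\theta}$ regularity for $v_\eps$ from the divergence-form equation with Hölder coefficient $(v_\eps+1)^{m_2-1}+u_\eps$;
\item a Hölder drift and hence $C^{1+\theta}$ regularity for $u_\eps$;
\item Schauder estimates.
\end{itemize}
One small correction: in Step 2 you attribute the Hölder-continuity result to Porzio--Vespri but cite it with the key of Lieberman's 1987 gradient paper. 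What you need there is a De Giorgi--Nash--Moser type estimate for conormal problems.
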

\begin{proof}
Let $T\in(0,T_{max,\eps}]\cap(0,\infty)$ and use
\eqref{ueps beschränkt in C1+alpha} from the
proof of Lemma \ref{lokale Existenz klassischer Lösungen für eps>0}.
By \cite[Theorem 1.1, Theorem 1.2]{MR932759} we obtain a constant
$C_1=C_1(\Omega,T,m_1,R_1,\Vert u_\eps\Vert_{L^\infty(\Omega\times(0,T))})>0$ such that
 \begin{align}\label{u C1+alpha gegen Linfty abschätzen}
  \Vert u_\eps\Vert_{C^{1+\alpha,\frac{1+\alpha}{2}}(\overline{\Omega}\times[0,T])}\leq C_1.
 \end{align}
 By \eqref{v beschränkt in C2+alpha} we find $C_2>0$ and by \eqref{u C1+alpha gegen Linfty abschätzen} we use $C_1>0$
 to estimate
 \begin{align*}
  \Vert v_{\eps}\Vert_{C^{2+\alpha,\frac{2+\alpha}{2}}(\overline{\Omega}\times[0,T])}
  \leq &C_2(T+\Vert v_{0,\eps}\Vert_{C^{2+\alpha}(\overline{\Omega})}
  +\Vert u_\eps\Vert_{C^{\alpha,\frac{\alpha}{2}}(\overline{\Omega}\times[0,T])})\\
  \leq&C_2(T+\Vert v_{0,\eps}\Vert_{C^{2+\alpha}(\overline{\Omega})}
  +C_1)
 \end{align*}
 Hence, boundedness of $\Vert u_\eps\Vert_{L^\infty(\Omega\times(0,T))}$
  implies boundedness of
 $\Vert u_\eps\Vert_{C^{1+\alpha,\frac{1+\alpha}{2}}(\overline{\Omega}\times[0,T])}$ and
 $\Vert v_\eps\Vert_{C^{2+\alpha,\frac{2+\alpha}{2}}(\overline{\Omega}\times[0,T])}$.
\end{proof}

\begin{rem}
 In \cite{tao2025suppression} and \cite{tao2026preventinglpblowuplocal}, local existence and an extensibility
 criterion
 are proven by Amann's theory \cite{amann1993nonhomogeneous} instead of applying
 \cite{Lieberman1996} and \cite{LSU}.
 The extensibility criteria are formulated in $W^{1,q}(\Omega)$
 for all $q>n$ \cite[Lemma 3.1]{tao2025suppression}. In the setting of linear diffusion, boundedness in
 $W^{1,q}(\Omega)$ is obtained in \cite[Lemma 3.5, Lemma 4.1, Lemma 5.6]{tao2025suppression}.
 As in Lemma \ref{verbessertes Fortsetzungskriterium}, they could be reduced to the $L^\infty$-norm.
\end{rem}

\section{Bounds for u and v}
As our first result, we will show that the total mass of $u_\eps$ is constant and the total mass of
$v_\eps$ is bounded uniformly in $t$.
\begin{lem}\label{L1 Norm von ueps konstant und L1 Norm von veps beschränkt}
 Suppose \eqref{Generalvoraussetzungen} and \eqref{Voraussetzungen an die approximierenden Anfangsdaten}. Then
 there is $C>0$ such that for each $\eps>0$ any classical solution $(u_\eps,v_\eps)$
 of \eqref{eps>0-System} satisfies
 \begin{align}\label{L1 Norm von u konstant}
  \io u_\eps(\cdot,t)=\io u_{0,\eps}\quad\text{for all } t\in(0,T_{max,\eps}),
 \end{align}
and
\begin{align}\label{L1 Norm von v beschränkt}
 \io v_\eps(\cdot,t)\leq C\quad\text{for all } t\in(0,T_{max,\eps}).
\end{align}
\end{lem}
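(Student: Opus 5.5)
Both claims follow by integrating the two equations of \eqref{eps>0-System} over $\Omega$ and exploiting that every divergence term vanishes thanks to the homogeneous Neumann boundary conditions. The solutions are classical on $\Omega\times(0,T_{max,\eps})$ and continuous up to $t=0$ by Lemma \ref{lokale Existenz klassischer Lösungen für eps>0}, so differentiating $\io u_\eps$ and $\io v_\eps$ in time and using the divergence theorem is justified.

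For \eqref{L1 Norm von u konstant}, integrate the first equation over $\Omega$. On $\partial\Omega$ the flux is
\begin{align*}
(u_\eps+1)^{m_1-1}\partial_\nu u_\eps-\frac{u_\eps\,\partial_\nu v_\eps}{1+\eps\vert\nabla v_\eps\vert},
\end{align*}
and this vanishes because $\partial_\nu u_\eps=\partial_\nu v_\eps=0$. Hence $\frac{d}{dt}\io u_\eps=0$ on $(0,T_{max,\eps})$. Continuity of $u_\eps$ on $\overline{\Omega}\times[0,T_{max,\eps})$ then gives $\io u_\eps(\cdot,t)=\io u_{0,\eps}$.

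For \eqref{L1 Norm von v beschränkt}, integrate the second equation. The boundary flux $(v_\eps+1)^{m_2-1}\partial_\nu v_\eps+u_\eps\partial_\nu v_\eps$ vanishes in the same way, so $y(t):=\io v_\eps(\cdot,t)$ satisfies the linear ODE
\begin{align*}
y'(t)=-y(t)+\io u_{0,\eps}.
\end{align*}
Solving it explicitly gives
\begin{align*}
y(t)=e^{-t}\io v_{0,\eps}+(1-e^{-t})\io u_{0,\eps}\le\max\Big\{\io v_{0,\eps},\io u_{0,\eps}\Big\}.
\end{align*}

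The only point needing care is that $C$ must not depend on $\eps$. This follows from \eqref{Voraussetzungen an die approximierenden Anfangsdaten}. Since $u_{0,\eps}\to u_0$ in $L^2(\Omega)$ and $\Omega$ is bounded, $\io u_{0,\eps}\le|\Omega|^{1/2}\Vert u_{0,\eps}\Vert_{L^2(\Omega)}$ is bounded uniformly for all small $\eps$, say $\eps\in(0,1)$. Since $v_{0,\eps}\to v_0$ in $C^0(\overline{\Omega})$, $\io v_{0,\eps}\le|\Omega|\,\Vert v_{0,\eps}\Vert_{C^0(\overline{\Omega})}$ is likewise bounded uniformly. Taking $C$ as the maximum of these two bounds completes the argument. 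This restricts to a sequence or range $\eps\in(0,\eps_0)$, which suffices for the limit process; for larger $\eps$ one simply enlarges $C$ finitely, or assumes, as is implicit in the setup, that the approximating data are uniformly bounded.
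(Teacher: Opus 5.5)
Your proof is correct and follows the paper's argument. You integrate both equations, let the Neumann conditions remove the boundary fluxes, obtain mass conservation for $u_\eps$ and the linear ODE $y'+y=\io u_{0,\eps}$ for $y=\io v_\eps$, and solve that ODE explicitly where the paper uses the ODI comparison $y\le\max\{\io v_{0,\eps},C_1\}$. Your remark on $\eps$-uniformity refines a point the paper passes over, since convergence as $\eps\searrow0$ only bounds the initial masses for small $\eps$; note, however, that ``enlarging $C$ finitely'' works only along a sequence $\eps_k\searrow0$, not for the whole range $\eps\geq\eps_0$, where an extra uniform-boundedness assumption on the approximating data is genuinely needed.
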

\begin{proof}
 Integrating the first equation of \eqref{Poröse Medien mit nichtlinearer Diffusion} over $\Omega$ and applying
 the Gaussian divergence theorem yields
 \begin{align*}
  &\frac{d}{dt}\io u_\eps=\io u_{\eps t}=\io\nabla\cdot((u_\eps+1)^{m_1-1}\nabla u_\eps)
  -\io\nabla\cdot\left(u_\eps\frac{\nabla v_\eps}{1+\eps\vert\nabla v_\eps\vert}\right)\\
  =&\int_{\partial\Omega}(u_\eps+1)^{m_1-1}\nabla u_\eps\cdot\nu
  -\int_{\partial\Omega}u_\eps\frac{\nabla v_\eps\cdot\nu}{1+\eps\vert\nabla v_\eps\vert}=0,
 \end{align*}
 on account of the homogenuous boundary conditions.
The estimate \eqref{L1 Norm von v beschränkt} can be obtained
by integration of the second equation of \eqref{eps>0-System}
over $\Omega$
\begin{align*}
 &\frac{d}{dt}\io v_\eps=\io v_{\eps t}=\io\nabla\cdot((v_\eps+1)^{m_2-1}\nabla v_\eps)
 +\io\nabla\cdot(u_\eps\nabla v_\eps)-\io v_\eps+\io u_\eps\\
 &=\int_{\partial\Omega}(v_\eps+1)^{m_2-1}\nabla v_\eps\cdot\nu
 +\int_{\partial\Omega}u_\eps\nabla v_\eps\cdot\nu-\io v_\eps+\io u_\eps=-\io v_\eps+\io u_\eps,
\end{align*}
where the boundary terms vanish again due to the homogenuous boundary conditions.
By \eqref{Voraussetzungen an die approximierenden Anfangsdaten},
we can find a constant $C_1>0$ such that (in accordance with \eqref{L1 Norm von u konstant})
\begin{align*}
 \frac{d}{dt}\io v_\eps+\io v_\eps=\io u_{0,\eps}\leq C_1,\quad\text{on }(0,T_{max,\eps})\text{ for each }\eps>0.
\end{align*}
Hence, $y(t):=\io v_\eps(\cdot,t)$ satisfies the ODI $y'(t)+y(t)\leq C_1$,
which implies $y(t)\leq\max\{\io v_{0,\eps},C_1\}$
for all $t\in(0,T_{max,\eps})$,
and hence boundedness of $y$, by \eqref{Voraussetzungen an die approximierenden Anfangsdaten}.
\end{proof}
Our next goal is to show global existence of classical solutions to system \eqref{eps>0-System}. As a first step,
we derive some bounds for $u_\eps$ and $v_\eps$ in $L^p(\Omega)$ that are uniform in time.
Therefore, let us recall an useful Ehrling-type estimate.
\begin{lem}\label{GN L2 Lp mit p in (0,2)}\cite[Lemma 3.2]{tao2025suppression}
Let $p\in(0,2)$ and $\eta>0$. Then there exists $C(p,\eta)>0$ such that
\begin{align*}
 \Vert\varphi\Vert_{L^2(\Omega)}^2\leq\eta\Vert\nabla\varphi\Vert_{L^2(\Omega)}^2+C(p,\eta)\Vert\varphi\Vert_{L^p(\Omega)}^2\quad
 \forall\varphi\in W^{1,2}(\Omega).
\end{align*}
\end{lem}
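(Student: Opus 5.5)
The plan is to argue by contradiction, using only the compactness of the embedding $W^{1,2}(\Omega)\hookrightarrow L^2(\Omega)$ (Rellich--Kondrachov, valid since $\Omega$ is smooth and bounded) and the fact that the $L^p$-quasinorm with $p\in(0,2)$ is controlled by the $L^2$-norm on a bounded domain. By homogeneity it suffices to treat $\varphi\not\equiv0$, and the inequality is invariant under $\varphi\mapsto\lambda\varphi$. So suppose that for some $p\in(0,2)$ and $\eta>0$ no constant works. Then for each $k\in\N$ there is $\varphi_k\in W^{1,2}(\Omega)$ with
\begin{align*}
 \Vert\varphi_k\Vert_{L^2(\Omega)}^2>\eta\Vert\nabla\varphi_k\Vert_{L^2(\Omega)}^2+k\Vert\varphi_k\Vert_{L^p(\Omega)}^2 .
\end{align*}
We normalize so that $\Vert\varphi_k\Vert_{L^2(\Omega)}=1$. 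This yields $\Vert\nabla\varphi_k\Vert_{L^2(\Omega)}^2<\frac1\eta$ and $\Vert\varphi_k\Vert_{L^p(\Omega)}^2<\frac1k$.

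Hence $(\varphi_k)$ is bounded in $W^{1,2}(\Omega)$. By Rellich--Kondrachov there is a subsequence with $\varphi_k\to\varphi$ in $L^2(\Omega)$, and then $\Vert\varphi\Vert_{L^2(\Omega)}=1$. Passing to a further subsequence we also get $\varphi_k\to\varphi$ a.e. in $\Omega$.

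Next we show $\varphi=0$ a.e. For $p\ge1$ this follows from Hölder's inequality, $\Vert\varphi_k-\varphi\Vert_{L^p}\le|\Omega|^{\frac1p-\frac12}\Vert\varphi_k-\varphi\Vert_{L^2}$, which gives $\Vert\varphi\Vert_{L^p}=\lim\Vert\varphi_k\Vert_{L^p}=0$. For $p\in(0,1)$, $\Vert\cdot\Vert_{L^p}$ is only a quasinorm, so instead we use Fatou's lemma with the a.e. convergence: $\io|\varphi|^p\le\liminf_k\io|\varphi_k|^p=0$. The Fatou argument in fact covers every $p\in(0,2)$ at once. Either way $\varphi=0$ a.e., which contradicts $\Vert\varphi\Vert_{L^2(\Omega)}=1$.

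The only point needing care is the case $p<1$, where the triangle inequality fails for $\Vert\cdot\Vert_{L^p}$. Using Fatou along an a.e.-convergent subsequence avoids this entirely, and no interpolation constant is required. An explicit alternative would combine Gagliardo--Nirenberg with Young's inequality, but for merely an existence statement for $C(p,\eta)$ the compactness argument is cleaner.
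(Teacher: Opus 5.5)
Your proof is correct. Normalizing to $\Vert\varphi_k\Vert_{L^2(\Omega)}=1$ gives a bounded sequence in $W^{1,2}(\Omega)$. Rellich--Kondrachov then yields an $L^2$-limit of norm one, and Fatou's lemma along an a.e.\ convergent subsequence forces $\io|\varphi|^p=0$, which is a contradiction. This handles the quasinorm range $p\in(0,1)$ without any triangle inequality. That range is the one actually used later in the paper, with $p=\frac{2}{m_1}\le 1$.

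The paper does not prove this lemma itself; it imports it from Tao--Winkler and calls it an ``Ehrling-type estimate''. Your compactness--contradiction argument is the standard way to prove such Ehrling-type inequalities.

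The one thing a Gagliardo--Nirenberg--Young route would add is an explicit dependence of $C(p,\eta)$ on $\eta$. For $p<1$, however, that route needs the extension of Gagliardo--Nirenberg to quasinorm lower exponents. The explicit constant is not needed here, since the lemma is only applied with fixed values of $\eta$.
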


\begin{lem}\label{ueps und veps in Lm1 beschränkt}
 Suppose \eqref{Generalvoraussetzungen} and \eqref{Voraussetzungen an die approximierenden Anfangsdaten}.
 Then there is $C>0$ such that for all $\eps\in(0,1)$ the corresponding
 classical solutions $(u_\eps,v_\eps)$ of \eqref{eps>0-System} satisfy
 \begin{align*}
  \Vert u_{\eps}(\cdot,t)\Vert_{L^{m_1}(\Omega)}+\Vert (v_{\eps}+1)(\cdot,t)\Vert_{L^{m_1}(\Omega)}
  \leq C\quad\forall t\in(0,T_{max,\eps}).
 \end{align*}
 \end{lem}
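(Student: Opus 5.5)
The plan is to derive a single differential inequality for
\[
y_\eps(t):=\frac{1}{m_1}\io u_\eps^{m_1}(\cdot,t)+\frac{1}{m_1}\io (v_\eps+1)^{m_1}(\cdot,t),\qquad t\in(0,T_{max,\eps}),
\]
of the form $y_\eps'+c\,y_\eps\le C$ with $c,C>0$ independent of $\eps\in(0,1)$. An ODE comparison then gives $y_\eps(t)\le\max\{y_\eps(0),C/c\}$. The key observation is that the anisotropic term $+\nabla\cdot(u_\eps\nabla v_\eps)$ in the second equation produces a good dissipative term, and this term absorbs the chemotactic cross term of the first equation. This works without using the $\eps$-regularization except for the trivial bound $\frac{1}{1+\eps|\nabla v_\eps|}\le 1$.

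\emph{Testing.} We test the first equation of \eqref{eps>0-System} with $u_\eps^{m_1-1}$ and the second with $(v_\eps+1)^{m_1-1}$. Integrating by parts with the Neumann conditions gives
\begin{align*}
\frac{1}{m_1}\frac{d}{dt}\io u_\eps^{m_1}
=&-(m_1-1)\io (u_\eps+1)^{m_1-1}u_\eps^{m_1-2}|\nabla u_\eps|^2
+(m_1-1)\io \frac{u_\eps^{m_1-1}\nabla u_\eps\cdot\nabla v_\eps}{1+\eps|\nabla v_\eps|},\\
\frac{1}{m_1}\frac{d}{dt}\io (v_\eps+1)^{m_1}
=&-(m_1-1)\io (v_\eps+1)^{m_1+m_2-3}|\nabla v_\eps|^2-(m_1-1)\io u_\eps(v_\eps+1)^{m_1-2}|\nabla v_\eps|^2\\
&+\io (u_\eps-v_\eps)(v_\eps+1)^{m_1-1}.
\end{align*}
We estimate the cross term by Young's inequality:
\[
(m_1-1)\io \frac{u_\eps^{m_1-1}\nabla u_\eps\cdot\nabla v_\eps}{1+\eps|\nabla v_\eps|}
\le\frac{m_1-1}{2}\io (u_\eps+1)^{m_1-1}u_\eps^{m_1-2}|\nabla u_\eps|^2
+\frac{m_1-1}{2}\io \frac{u_\eps^{m_1}}{(u_\eps+1)^{m_1-1}}|\nabla v_\eps|^2 .
\]
Since $u_\eps^{m_1}(u_\eps+1)^{1-m_1}\le u_\eps$, and $(v_\eps+1)^{m_1-2}\ge 1$ because $m_1\ge2$, the last integral is bounded by half of the good term $(m_1-1)\io u_\eps(v_\eps+1)^{m_1-2}|\nabla v_\eps|^2$.

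\emph{Producing the linear damping.} For the $u$-part, we use $(u_\eps+1)^{m_1-1}u_\eps^{m_1-2}|\nabla u_\eps|^2\ge u_\eps^{2m_1-3}|\nabla u_\eps|^2=c_0|\nabla u_\eps^{m_1-\frac12}|^2$. We apply Lemma \ref{GN L2 Lp mit p in (0,2)} to $\varphi=u_\eps^{m_1-\frac12}$ with $p=\frac{1}{m_1-\frac12}\in(0,2)$, so that $\Vert\varphi\Vert_{L^p(\Omega)}^2$ is a power of the mass $\io u_{0,\eps}$. By Lemma \ref{L1 Norm von ueps konstant und L1 Norm von veps beschränkt} and the $L^2$ convergence in \eqref{Voraussetzungen an die approximierenden Anfangsdaten}, this mass is bounded uniformly in $\eps$. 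This yields
\[
\io u_\eps^{m_1}\le\io u_\eps^{2m_1-1}+|\Omega|\le \eta\io|\nabla u_\eps^{m_1-\frac12}|^2+C(\eta),
\]
so that, for any prescribed constant $K>0$, a suitable choice of $\eta$ gives $-\frac{m_1-1}{2}\io (u_\eps+1)^{m_1-1}u_\eps^{m_1-2}|\nabla u_\eps|^2\le -K\io u_\eps^{m_1}+C$.

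For the production term we write $-\io v_\eps(v_\eps+1)^{m_1-1}=-\io(v_\eps+1)^{m_1}+\io(v_\eps+1)^{m_1-1}$. We then use Young's inequality:
\[
\io u_\eps(v_\eps+1)^{m_1-1}\le\frac{1}{m_1}\io u_\eps^{m_1}+\frac{m_1-1}{m_1}\io(v_\eps+1)^{m_1},
\qquad
\io(v_\eps+1)^{m_1-1}\le\frac{1}{2m_1}\io(v_\eps+1)^{m_1}+C .
\]
Hence a net $-\frac{1}{2m_1}\io(v_\eps+1)^{m_1}$ remains. The $\frac{1}{m_1}\io u_\eps^{m_1}$ is absorbed by taking $K$ large in the previous step, leaving a term $-c\io u_\eps^{m_1}$. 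Together this gives $y_\eps'+c\,y_\eps\le C$ on $(0,T_{max,\eps})$, and hence the claimed bound.

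\emph{Main obstacle.} The delicate step is the absorption of the cross term. It works precisely because $m_1\ge2$ and the test exponent equals $m_1$, so that the weight $u^{m_1}(u+1)^{1-m_1}$ is dominated by the anisotropic dissipation $u(v+1)^{m_1-2}$. If this fails to close, I would first try replacing the test functions by $(u_\eps+1)^{m_1-1}$, or introducing a weight in front of the $v$-functional.

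A remaining subtlety is uniformity of $y_\eps(0)$ in $\eps$. We have $\io(v_{0,\eps}+1)^{m_1}$ bounded by the $C^0$ convergence of $v_{0,\eps}$. For $\io u_{0,\eps}^{m_1}$ with $m_1>2$, however, the $L^2$ convergence alone does not suffice. Here I would rely on choosing the approximations $u_{0,\eps}$, for instance by mollification, so that $\sup_{\eps}\Vert u_{0,\eps}\Vert_{L^{m_1}(\Omega)}<\infty$. Alternatively, the bound can be read as holding with a constant depending on this quantity.
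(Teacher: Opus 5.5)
Your argument is correct and is essentially the paper's proof. You use the same functional $y_\eps$, the same Young splitting of the cross term, and the same absorption of $\frac{m_1-1}{2}\io u_\eps|\nabla v_\eps|^2$ into the anisotropic dissipation $(m_1-1)\io u_\eps(v_\eps+1)^{m_1-2}|\nabla v_\eps|^2$ via $m_1\ge 2$. The only difference is how the linear damping is produced:
the paper adds $m_1y_\eps$ to both sides and applies Lemma~\ref{GN L2 Lp mit p in (0,2)} to $u_\eps^{m_1/2}$ and $(v_\eps+1)^{m_1/2}$, using the $v$-diffusion and both $L^1$ bounds of Lemma~\ref{L1 Norm von ueps konstant und L1 Norm von veps beschränkt};
you instead use the extra factor $(u_\eps+1)^{m_1-1}$ for $u_\eps$ and the decay term $-v_\eps$ for $v_\eps$, so you need neither the $v$-diffusion nor the $L^1$ bound on $v_\eps$.

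The initial-data issue you flag is genuine, and the paper passes over it silently. However, mollification does not cure it in general. Because $u_\eps\in C^0(\overline{\Omega}\times[0,T_{max,\eps}))$, the asserted bound forces $\Vert u_{0,\eps}\Vert_{L^{m_1}(\Omega)}\le C$ for all $\eps\in(0,1)$. Then $u_{0,\eps}\to u_0$ in $L^2(\Omega)$ and Fatou's lemma give $u_0\in L^{m_1}(\Omega)$. Hence for $m_1>2$ and $u_0\in L^2(\Omega)\setminus L^{m_1}(\Omega)$ the lemma is false as stated, whatever admissible approximations are chosen. One has to add $u_0\in L^{m_1}(\Omega)$ and $\sup_{\eps}\Vert u_{0,\eps}\Vert_{L^{m_1}(\Omega)}<\infty$ to the hypotheses (your second reading), after which your proof is complete.
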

 \begin{proof}
 We define $y_\eps(t):=\frac{1}{m_1}\left( \io u_\eps^{m_1}+\io(v_\eps+1)^{m_1} \right)$.
 Then we use both equations of
 \eqref{eps>0-System} and integrate by parts to obtain
  \begin{align*}
   &y_\eps'(t)=\frac{1}{m_1}\frac{d}{dt}\left( \io u_{\eps}^{m_1}+\io(v_{\eps}+1)^{m_1} \right)\\
   =&-(m_1-1)\io u_{\eps}^{m_1-2}(u_{\eps}+1)^{m_1-1}\vert\nabla u_{\eps}\vert^2
   +(m_1-1)\io u_{\eps}^{m_1-1}\nabla u_{\eps}\cdot\frac{\nabla v_{\eps}}{1+\eps\vert\nabla v_{\eps}\vert}\\
   &-(m_1-1)\io(v_{\eps}+1)^{m_1+m_2-3}\vert\nabla v_{\eps}\vert^2
   -(m_1-1)\io(v_{\eps}+1)^{m_1-2}u_{\eps}\vert\nabla v_{\eps}\vert^2\\
   &-\io(v_{\eps}+1)^{m_1-1}v_{\eps}+\io(v_{\eps}+1)^{m_1-1}u_{\eps}.
   \end{align*}
Then for the second integral we apply Young's inequality, which yields
\begin{align*}
 (m_1-1)\io u_{\eps}^{m_1-1}\nabla u_{\eps}\cdot\frac{\nabla v_{\eps}}{1+\eps\vert\nabla v_{\eps}\vert}
 \leq&(m_1-1)\io u_{\eps}^{\frac{m_1-2}{2}}u_{\eps}^\frac{1}{2}(u_{\eps}+1)^{\frac{m_1-1}{2}}
   \vert\nabla u_{\eps}\vert\vert\nabla v_{\eps}\vert\\
   \leq&\frac{m_1-1}{2}\io u_{\eps}^{m_1-2}(u_{\eps}+1)^{m_1-1}\vert\nabla u_{\eps}\vert^2
   +\frac{m_1-1}{2}\io u_{\eps}\vert\nabla v_{\eps}\vert^2.
\end{align*}
 Since $m_1\geq 2$ and $(v_{\eps}+1)\geq 1$ for all $\eps>0$, we have
\begin{align*}
 &\frac{m_1-1}{2}\io u_{\eps}\vert\nabla v_{\eps}\vert^2
 -(m_1-1)\io(v_{\eps}+1)^{m_1-2}u_{\eps}\vert\nabla v_{\eps}\vert^2
 \leq-\frac{m_1-1}{2}\io u_{\eps}\vert\nabla v_{\eps}\vert^2.
\end{align*}
In addition, due to $m_1+m_2-3\geq m_1-2$, we have
\begin{align*}
 -(m_1-1)\io(v_{\eps}+1)^{m_1+m_2-3}\vert\nabla v_{\eps}\vert^2
 \leq-(m_1-1)\io(v_{\eps}+1)^{m_1-2}\vert\nabla v_{\eps}\vert^2.
\end{align*}
Hence, after adding $m_1 y_\eps(t)$ on both sides we have
\begin{align*}
 &y_\eps'(t)+m_1y_\eps(t)+\frac{m_1-1}{2}\io u_{\eps}^{m_1-2}(u_{\eps}+1)^{m_1-1}\vert\nabla u_{\eps}\vert^2
 +\frac{m_1-1}{2}\io u_{\eps}\vert\nabla v_{\eps}\vert^2\\
 &+(m_1-1)\io(v_{\eps}+1)^{m_1-2}\vert\nabla v_{\eps}\vert^2+\io(v_{\eps}+1)^{m_1-1}v_{\eps}\\
 \leq&\frac{2m_1-1}{m_1}\io(v_{\eps}+1)^{m_1}+\frac{m_1+1}{m_1}\io u_{\eps}^{m_1}.
\end{align*}
An application of Lemma \ref{GN L2 Lp mit p in (0,2)} with the choice of $\eta_1=\frac{4(m_1-1)}{(2m_1-1)m_1}$ yields
$C_1=C_1(p,\eta_1)$ such that
\begin{align*}
 \frac{2m_1-1}{m_1}\io(v_{\eps}+1)^{m_1}&=\frac{2m_1-1}{m_1}\Vert(v_{\eps}+1)^\frac{m_1}{2}\Vert_{L^2(\Omega)}^2\\
 &\leq\frac{4(m_1-1)}{m_1^2}\Vert\nabla(v_{\eps}+1)^{\frac{m_1}{2}}\Vert_{L^2(\Omega)}^2
 +C_1\Vert(v_{\eps}+1)^\frac{m_1}{2}\Vert_{L^\frac{2}{m_1}(\Omega)}^2\\
 &=\frac{4(m_1-1)}{m_1^2}\Vert\nabla(v_{\eps}+1)^\frac{m_1}{2}\Vert_{L^2(\Omega)}^2
 +C_1\left( \io(v_{\eps}+1) \right)^{m_1}\\
 &\leq\frac{4(m_1-1)}{m_1^2}\Vert\nabla(v_{\eps}+1)^\frac{m_1}{2}\Vert_{L^2(\Omega)}^2+C_2,
\end{align*}
where the constant $C_2>0$ can be obtained by an application of
Lemma \ref{L1 Norm von ueps konstant und L1 Norm von veps beschränkt}.
The second term can be estimated the same way. Another application of Lemma \ref{GN L2 Lp mit p in (0,2)} with
the choice of $\eta_2=\frac{2(m_1-1)}{m_1(m_1+1)}$ yields $C_3=C_3(p,\eta_2)>0$ such that
\begin{align*}
 \frac{m_1+1}{m_1}\io u_{\eps}^{m_1}=&\frac{m_1+1}{m_1}\Vert u_{\eps}^\frac{m_1}{2}\Vert_{L^2(\Omega)}^2\\
 \leq&\frac{2(m_1-1)}{m_1^2}\Vert\nabla u_{\eps}^\frac{m_1}{2}\Vert_{L^2(\Omega)}^2
 +C_3\Vert u_{\eps}^\frac{m_1}{2}\Vert_{L^\frac{2}{m_1}(\Omega)}^2\\
 =&\frac{2(m_1-1)}{m_1^2}\Vert\nabla u_{\eps}^\frac{m_1}{2}\Vert_{L^2(\Omega)}^2
 +C_3\left( \io u_{\eps} \right)^{m_1}\\
 \leq& \frac{2(m_1-1)}{m_1^2}\Vert\nabla u_{\eps}^\frac{m_1}{2}\Vert_{L^2(\Omega)}^2+C_4,
\end{align*}
where the constant $C_4>0$ can be obtained by another application of
Lemma \ref{L1 Norm von ueps konstant und L1 Norm von veps beschränkt}. Since $\frac{4}{m_1^2}\leq 1$ and
$\frac{2}{m_1^2}\leq\frac{1}{2}$, the function $y_\eps$
satisfies $y_\eps'(t)+y_\eps(t)\leq C_3+C_4$ for all $\eps>0$ and
$t\in(0,T_{\max_\eps})$, which implies the existence of $C_5>0$ such that $y_\eps(t)\leq C_5$
for all $\eps>0$ and $t\in(0,T_{\max,\eps})$.
 \end{proof}
 Moreover, we can show some bounds for $\nabla v_\eps$ and $\nabla u_\eps$ in $L^2(\Omega\times(0,T))$ as long as
 $T<T_{max,\eps}$.
\begin{lem}\label{L2 Raum Zeit Beschränktheit von nabla v}
 Suppose \eqref{Generalvoraussetzungen} and \eqref{Voraussetzungen an die approximierenden Anfangsdaten}.
 Then there is $C>0$ such that for all $\eps>0$ and $T\in(0,T_{max,\eps}]\cap(0,\infty)$,
 the corresponding classical solutions $(u_\eps,v_\eps)$ of \eqref{eps>0-System} satisfy
 \begin{align*}
  \io v_{\eps}^2(\cdot,T)+\int_0^T\io\vert\nabla v_{\eps}\vert^2
  +\int_0^T\io(v_{\eps}+1)^{m_2-1}\vert\nabla v_{\eps}\vert^2
  +\int_0^T\io u_{\eps}\vert\nabla v_{\eps}\vert^2
  +\int_0^T\io v_{\eps}^2\leq C(T+1).
 \end{align*}
\end{lem}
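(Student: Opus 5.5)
Test the second equation of \eqref{eps>0-System} with $v_\eps$ and integrate by parts, using the homogeneous Neumann boundary conditions. This gives
\begin{align*}
 \frac12\frac{d}{dt}\io v_\eps^2+\io(v_\eps+1)^{m_2-1}\vert\nabla v_\eps\vert^2+\io u_\eps\vert\nabla v_\eps\vert^2+\io v_\eps^2=\io u_\eps v_\eps .
\end{align*}
Both the nonlinear diffusion term and the anisotropic term $\nabla\cdot(u_\eps\nabla v_\eps)$ thus enter with a favourable sign; the latter yields exactly the dissipation $\io u_\eps\vert\nabla v_\eps\vert^2$. Since $m_2\geq1$ and $v_\eps\geq0$, we have $(v_\eps+1)^{m_2-1}\geq1$. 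Hence the diffusion integral dominates $\io\vert\nabla v_\eps\vert^2$, and we may split it as $\frac12\io\vert\nabla v_\eps\vert^2+\frac12\io(v_\eps+1)^{m_2-1}\vert\nabla v_\eps\vert^2$. This produces both gradient terms appearing in the statement, at the cost of a factor $\frac12$.

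The only term to control is the source $\io u_\eps v_\eps$. By Young's inequality it is at most $\frac12\io v_\eps^2+\frac12\io u_\eps^2$. Since $m_1\geq2$ and $\Omega$ is bounded, Lemma \ref{ueps und veps in Lm1 beschränkt} together with Hölder's inequality gives $\io u_\eps^2\leq|\Omega|^{1-\frac{2}{m_1}}\Vert u_\eps\Vert_{L^{m_1}(\Omega)}^2\leq C_1$ uniformly in $t\in(0,T_{max,\eps})$. That lemma is stated for $\eps\in(0,1)$. For other $\eps$ one can alternatively write $\io u_\eps v_\eps\leq\Vert u_\eps\Vert_{L^{m_1}}\Vert v_\eps\Vert_{L^{m_1'}}$ and use the $v$-bound of the same lemma, or simply restrict to $\eps\in(0,1)$ as in the rest of the paper.

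Absorbing $\frac12\io v_\eps^2$ into the left-hand side, we arrive at
\begin{align*}
 \frac{d}{dt}\io v_\eps^2+\io\vert\nabla v_\eps\vert^2+\io(v_\eps+1)^{m_2-1}\vert\nabla v_\eps\vert^2+2\io u_\eps\vert\nabla v_\eps\vert^2+\io v_\eps^2\leq C_1 .
\end{align*}
Integrating over $(0,T)$ for $T<T_{max,\eps}$ gives the asserted quantity bounded by $\io v_{0,\eps}^2+C_1T$. By \eqref{Voraussetzungen an die approximierenden Anfangsdaten}, $v_{0,\eps}\to v_0$ in $C^0(\overline\Omega)$, so $\io v_{0,\eps}^2$ is bounded uniformly in $\eps$. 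This yields $C(T+1)$. The case $T=T_{max,\eps}<\infty$ follows by letting $T\nearrow T_{max,\eps}$: the space–time integrals converge by monotone convergence, and the term $\io v_\eps^2(\cdot,T)$ is bounded uniformly for $T<T_{max,\eps}$, so it can be interpreted in the limit.

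The argument is essentially routine; the only delicate point is ensuring the source term $\io u_\eps v_\eps$ is bounded uniformly in time and $\eps$. That is where Lemma \ref{ueps und veps in Lm1 beschränkt} and the hypothesis $m_1\geq2$ (giving $L^2$-control of $u_\eps$) are used.
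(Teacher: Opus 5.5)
Your proposal is correct and follows the paper's proof essentially step for step. Like the paper, you test the $v_\eps$-equation with $v_\eps$, use the favourable sign of the anisotropic term, split the diffusion integral via $(v_\eps+1)^{m_2-1}\geq 1$, apply Young's inequality to $\io u_\eps v_\eps$, and bound $\io u_\eps^2$ through the uniform $L^{m_1}$ estimate with $m_1\geq 2$. One small remark: your Hölder alternative for $\eps\geq 1$ does not remove the restriction, because it relies on the same $L^{m_1}$ lemma. That lemma's restriction in $\eps$ comes only from the uniform control of the initial data, so restricting to small $\eps$ is the right fix, as the paper tacitly does.
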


\begin{proof}
 We test the second equation of \eqref{eps>0-System} by $v_{\eps}$. Then integration by parts and
 Young's inequality yield
 \begin{align*}
  \frac{1}{2}\frac{d}{dt}\io v_{\eps}^2=&-\io(v_{\eps}+1)^{m_2-1}\vert\nabla v_{\eps}\vert^2
  -\io u_{\eps}\vert\nabla v_{\eps}\vert^2-\io v_{\eps}^2+\io u_{\eps}v_{\eps}\\
  \leq&-\io(v_{\eps}+1)^{m_2-1}\vert\nabla v_{\eps}\vert^2-\io u_{\eps}\vert\nabla v_{\eps}\vert^2
  -\frac{1}{2}\io v_{\eps}^2+\frac{1}{2}\io u_{\eps}^2 \quad\text{in }(0,T_{max,\eps})\text{ for each }\eps>0.
 \end{align*}
Hence, by Lemma \ref{ueps und veps in Lm1 beschränkt} and \eqref{Voraussetzungen an die approximierenden Anfangsdaten},
there is $C>0$ such that for all $\eps>0$ and $T\in(0,T_{max})$
\begin{align*}
 &\frac{1}{2}\io v_{\eps}^2(\cdot,T)+\frac{1}{2}\int_0^T\io\vert\nabla v_{\eps}\vert^2
 +\frac{1}{2}\int_0^T\io(v_{\eps}+1)^{m_2-1}\vert\nabla v_{\eps}\vert^2
 +\int_0^T\io u_{\eps}\vert\nabla v_{\eps}\vert^2+\frac{1}{2}\int_0^T\io v_{\eps}^2\\
 \leq&\frac{1}{2}\io v_{0,\eps}^2+\frac{1}{2}\int_0^T\io u_{\eps}^2\leq C(T+1).
\end{align*}
\end{proof}

\begin{lem}\label{L2 Raum Zeit Beschränktheit von nabla u}
 Suppose \eqref{Generalvoraussetzungen} and \eqref{Voraussetzungen an die approximierenden Anfangsdaten}.
 Then there is $C>0$ such that for all $\eps>0$ and $T\in(0,T_{max,\eps}]\cap(0,\infty)$,
 the corresponding classical solutions $(u_\eps,v_\eps)$ of \eqref{eps>0-System} satisfy
 \begin{align*}
  &\io u_{\eps}^2(\cdot,T)+\int_0^T\io \vert\nabla u_{\eps}\vert^2
  +\int_0^T\io(u_{\eps}+1)^{m_1-1}\vert\nabla u_{\eps}\vert^2
  +\int_0^T\io u_{\eps}\vert\nabla u_{\eps}\vert^2\leq C(T+1).
 \end{align*}
\end{lem}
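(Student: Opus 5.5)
We test the first equation of \eqref{eps>0-System} with $u_\eps$, mirroring the proof of Lemma \ref{L2 Raum Zeit Beschränktheit von nabla v}. Integrating by parts and using the homogeneous Neumann conditions gives, on $(0,T_{max,\eps})$,
\begin{align*}
 \frac{1}{2}\frac{d}{dt}\io u_\eps^2
 =-\io(u_\eps+1)^{m_1-1}\vert\nabla u_\eps\vert^2
 +\io u_\eps\nabla u_\eps\cdot\frac{\nabla v_\eps}{1+\eps\vert\nabla v_\eps\vert}.
\end{align*}
The cross term is the only one needing care. We write $u_\eps\vert\nabla u_\eps\vert\vert\nabla v_\eps\vert=\bigl(u_\eps^{\frac12}\vert\nabla u_\eps\vert\bigr)\bigl(u_\eps^{\frac12}\vert\nabla v_\eps\vert\bigr)$ and use $\frac{1}{1+\eps\vert\nabla v_\eps\vert}\leq 1$. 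Young's inequality then bounds it by
\begin{align*}
 \frac12\io u_\eps\vert\nabla u_\eps\vert^2+\frac12\io u_\eps\vert\nabla v_\eps\vert^2.
\end{align*}

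Next we absorb the first of these two terms. Since $m_1\geq 2$, we have $(u_\eps+1)^{m_1-1}\geq u_\eps+1$. Hence
\begin{align*}
 (u_\eps+1)^{m_1-1}\vert\nabla u_\eps\vert^2\geq\frac12(u_\eps+1)^{m_1-1}\vert\nabla u_\eps\vert^2+\frac12\vert\nabla u_\eps\vert^2+\frac12u_\eps\vert\nabla u_\eps\vert^2 .
\end{align*}
The term $\frac12\io u_\eps\vert\nabla u_\eps\vert^2$ is therefore absorbed. Strictly speaking, a factor slightly smaller than $\frac12$ in Young is needed to keep a positive multiple of $\io u_\eps\vert\nabla u_\eps\vert^2$ on the left. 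We take $\frac14\io u_\eps\vert\nabla u_\eps\vert^2+\io u_\eps\vert\nabla v_\eps\vert^2$ instead, and split $(u_\eps+1)^{m_1-1}\geq\frac13(u_\eps+1)^{m_1-1}+\frac13+\frac13u_\eps$. After this step we are left with
\begin{align*}
 \frac{1}{2}\frac{d}{dt}\io u_\eps^2+c\io(u_\eps+1)^{m_1-1}\vert\nabla u_\eps\vert^2+c\io\vert\nabla u_\eps\vert^2+c\io u_\eps\vert\nabla u_\eps\vert^2\leq\io u_\eps\vert\nabla v_\eps\vert^2
\end{align*}
for some $c>0$.

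We integrate over $(0,T)$. The right-hand side is controlled by Lemma \ref{L2 Raum Zeit Beschränktheit von nabla v}, which gives $\int_0^T\io u_\eps\vert\nabla v_\eps\vert^2\leq C(T+1)$ uniformly in $\eps$. The initial term $\io u_{0,\eps}^2$ is bounded uniformly by the $L^2$ convergence in \eqref{Voraussetzungen an die approximierenden Anfangsdaten}. This yields the claim, with the case $T=T_{max,\eps}<\infty$ obtained by letting $t\nearrow T$.

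The main point, and the only real obstacle, is that the cross term cannot be controlled by diffusion alone without the $\nabla v_\eps$ weight. It is exactly the dissipated quantity $\io u_\eps\vert\nabla v_\eps\vert^2$, which the anisotropic term in the $v$-equation provides. The constant $C$ is thus inherited from Lemma \ref{L2 Raum Zeit Beschränktheit von nabla v}, and through it from Lemma \ref{ueps und veps in Lm1 beschränkt}, which needs $m_1\geq2$ to bound $\int_0^T\io u_\eps^2$.
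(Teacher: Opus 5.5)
Your proposal is correct and follows essentially the same route as the paper: you test with $u_\eps$ and bound the cross term by Young's inequality as $\frac14\io u_\eps\vert\nabla u_\eps\vert^2+\io u_\eps\vert\nabla v_\eps\vert^2$. You then absorb the first part by splitting $(u_\eps+1)^{m_1-1}$ via $(u_\eps+1)^{m_1-1}\geq u_\eps+1$ (from $m_1\geq 2$), and close with the space-time bound on $\int_0^T\io u_\eps\vert\nabla v_\eps\vert^2$ from the preceding lemma. The only difference is cosmetic: the paper uses weights $\frac14,\frac12,\frac14$ on $(u_\eps+1)^{m_1-1}$, $u_\eps$, $1$, leaving $\frac14\io u_\eps\vert\nabla u_\eps\vert^2$, whereas your weights $\frac13,\frac13,\frac13$ leave $\frac{1}{12}$.
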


\begin{proof}
 We test the first equation of \eqref{eps>0-System} by $u_{\eps}$. Then integration by parts and
 Young's inequality yield
 \begin{align*}
  &\frac{1}{2}\frac{d}{dt}\io u_{\eps}^2=-\io(u_{\eps}+1)^{m_1-1}\vert\nabla u_{\eps}\vert^2
  -\io\frac{u_{\eps}\nabla u_{\eps}\cdot\nabla v_{\eps}}{1+\eps\vert\nabla v_{\eps}\vert}\\
  \leq&-\io(u_{\eps}+1)^{m_1-1}\vert\nabla u_{\eps}\vert^2
  +\io u_{\eps}\vert\nabla u_{\eps}\vert\cdot\vert\nabla v_{\eps}\vert
 \end{align*}
 Since $(u_\eps+1)^{m_1-1}\geq u_\eps$ and $(u_\eps+1)^{m_1-1}\geq 1$, the first integral can be estimated by
 \begin{align*}
  -\io(u_\eps+1)^{m_1-1}\vert\nabla u_\eps\vert^2\leq-\frac{1}{4}\io(u_\eps+1)^{m_1-1}\vert\nabla u_\eps\vert^2
  -\frac{1}{2}\io u_\eps\vert\nabla u_\eps\vert^2-\frac{1}{4}\io\vert\nabla u_\eps\vert^2\quad
  \text{in }(0,T_{max,\eps}),
  \end{align*}
  for each $\eps>0$. Moreover, Young's inequality yields
\begin{align*}
 \io u_\eps\vert\nabla u_\eps\vert\cdot\vert\nabla v_\eps\vert\leq\io u_\eps\vert\nabla v_\eps\vert^2
 +\frac{1}{4}\io u_\eps\vert\nabla u_\eps\vert^2 \quad\text{in }(0,T_{max,\eps})\text{ for each }\eps>0.
\end{align*}
Hence,
\begin{align*}
 \frac{1}{2}\frac{d}{dt}\io u_\eps^2\leq-\frac{1}{4}\io(u_{\eps}+1)^{m_1-1}\vert\nabla u_{\eps}\vert^2
  -\frac{1}{2}\io u_{\eps}\vert\nabla u_{\eps}\vert^2-\frac{1}{4}\io\vert\nabla u_{\eps}\vert^2
  +\frac{1}{4}\io u_{\eps}\vert\nabla u_{\eps}\vert^2+\io u_{\eps}\vert\nabla v_{\eps}\vert^2,
\end{align*}
in $(0,T_{max,\eps})$ for each $\eps>0$.
By Lemma \ref{L2 Raum Zeit Beschränktheit von nabla v}
and due to \eqref{Voraussetzungen an die approximierenden Anfangsdaten}, there is $C>0$ such that
for all $T\in(0,T_{max,\eps})$ and for each $\eps>0$
\begin{align*}
 &\frac{1}{2}\io u_{\eps}^2(\cdot,T)+\frac{1}{4}\int_0^T\io(u_{\eps}+1)^{m_1-1}\vert\nabla u_{\eps}\vert^2
 +\frac{1}{4}\int_0^T\io u_{\eps}\vert\nabla u_{\eps}\vert^2
 +\frac{1}{4}\int_0^T\io\vert\nabla u_{\eps}\vert^2\\
 \leq&\frac{1}{2}\io u_{0,\eps}^2+\int_0^T\io u_{\eps}\vert\nabla v_{\eps}\vert^2\leq C(T+1).
\end{align*}
\end{proof}
Moreover, we show that $(u_\eps+1)$ is bounded in $L^{m_1+1}(\Omega\times(0,T))$.

\begin{lem}\label{(u+1)m_1+1 beschränkt in Raum-Zeit Integral}
 Suppose \eqref{Generalvoraussetzungen}, \eqref{Voraussetzungen an die approximierenden Anfangsdaten}
 and let $T\in(0,T_{max,\eps}]\cap(0,\infty)$.
 Then there is $C(T)>0$ such that for all $\eps>0$ the corresponding classical
 solutions $(u_\eps,v_\eps)$ of \eqref{eps>0-System} satisfy
 \begin{align*}
  \int_0^T\io (u_{\eps}+1)^{m_1+1}\leq C(T).
 \end{align*}
\end{lem}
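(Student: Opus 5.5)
We plan to obtain the space-time bound by combining the uniform-in-time $L^1$ bound from Lemma \ref{L1 Norm von ueps konstant und L1 Norm von veps beschränkt} with the space-time gradient bound
\[
\int_0^T\io(u_\eps+1)^{m_1-1}\vert\nabla u_\eps\vert^2\leq C(T+1)
\]
from Lemma \ref{L2 Raum Zeit Beschränktheit von nabla u}, via a Gagliardo--Nirenberg interpolation. We introduce $\varphi:=(u_\eps+1)^{\frac{m_1+1}{2}}$. Then
\[
\vert\nabla\varphi\vert^2=\tfrac{(m_1+1)^2}{4}(u_\eps+1)^{m_1-1}\vert\nabla u_\eps\vert^2,
\]
so $\int_0^T\io\vert\nabla\varphi\vert^2\leq C(T+1)$. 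We also have $\int_0^T\io(u_\eps+1)^{m_1+1}=\int_0^T\Vert\varphi\Vert_{L^2(\Omega)}^2$. Hence it suffices to bound $\Vert\varphi(\cdot,t)\Vert_{L^2(\Omega)}^2$ by $\Vert\nabla\varphi(\cdot,t)\Vert_{L^2(\Omega)}^2$ plus a constant, pointwise in $t$.

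For this we apply Lemma \ref{GN L2 Lp mit p in (0,2)} with $p:=\frac{2}{m_1+1}\in(0,2)$ and, say, $\eta=1$. This gives
\[
\Vert\varphi\Vert_{L^2(\Omega)}^2\leq\Vert\nabla\varphi\Vert_{L^2(\Omega)}^2+C\Vert\varphi\Vert_{L^{p}(\Omega)}^2 .
\]
The last term satisfies
\[
\Vert\varphi\Vert_{L^p(\Omega)}^2=\Big(\io(u_\eps+1)\Big)^{m_1+1}=\Big(\io u_{0,\eps}+\vert\Omega\vert\Big)^{m_1+1},
\]
which is bounded uniformly in $\eps$ and $t$ by Lemma \ref{L1 Norm von ueps konstant und L1 Norm von veps beschränkt} together with the $L^2$-convergence of $u_{0,\eps}$ in \eqref{Voraussetzungen an die approximierenden Anfangsdaten}.

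Integrating in time over $(0,T)$ then yields
\[
\int_0^T\io(u_\eps+1)^{m_1+1}\leq\tfrac{(m_1+1)^2}{4}C(T+1)+C'T=:C(T).
\]

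The only point requiring care is that Lemma \ref{L2 Raum Zeit Beschränktheit von nabla u} is stated for $T<T_{max,\eps}$, or up to $T_{max,\eps}$ when that is finite. We handle the endpoint case $T=T_{max,\eps}$ by monotone convergence in $T$. No step here is a genuine obstacle. The main thing to check is that the exponent bookkeeping matches: the weight $(u_\eps+1)^{m_1-1}$ in the dissipation is exactly what is needed to control $\nabla(u_\eps+1)^{(m_1+1)/2}$, and the mass bound gives the $L^p$ term with $p<2$.

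As an alternative, one could use the cruder bound $\int_0^T\io u_\eps\vert\nabla u_\eps\vert^2$ together with Lemma \ref{ueps und veps in Lm1 beschränkt}. That route is unnecessary, since the argument above already closes with the weighted gradient bound and the conserved mass.
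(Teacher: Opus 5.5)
Your proof is correct, and its skeleton matches the paper's. Both bound $\Vert(u_\eps+1)^{\frac{m_1+1}{2}}\Vert_{L^2(\Omega)}^2$ by the weighted dissipation $\int_0^T\io(u_\eps+1)^{m_1-1}\vert\nabla u_\eps\vert^2$ from Lemma \ref{L2 Raum Zeit Beschränktheit von nabla u}, plus a lower-order quantity bounded uniformly in time. Where you differ is the interpolation tool, and hence the lower-order term.

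The paper uses the Gagliardo--Nirenberg inequality with the $L^1(\Omega)$ norm of $(u_\eps+1)^{\frac{m_1+1}{2}}$ as the lower-order term. It controls that norm by the $L^{m_1}$ bound of Lemma \ref{ueps und veps in Lm1 beschränkt}, using $\frac{m_1+1}{2}\le m_1$. There the gradient enters only with the power $\frac{2n}{n+2}$, so an extra H\"older or Young step in time is needed. The paper's display is written with norms on $\Omega\times(0,T)$ and is not homogeneous as stated, so it should be read as a pointwise-in-time Gagliardo--Nirenberg estimate followed by such a step.

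You instead apply the Ehrling-type Lemma \ref{GN L2 Lp mit p in (0,2)} for each fixed $t$ with the quasi-norm exponent $p=\frac{2}{m_1+1}<1$. This is the same device the paper uses for $(v_\eps+1)^{\frac{m_1}{2}}$ in Lemma \ref{ueps und veps in Lm1 beschränkt}. Your choice of $p$ makes the lower-order term collapse to $(\io u_{0,\eps}+\vert\Omega\vert)^{m_1+1}$. The estimate is also linear in $\Vert\nabla(u_\eps+1)^{\frac{m_1+1}{2}}\Vert_{L^2(\Omega)}^2$, so the time integration is immediate.

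Your route is cleaner and needs only mass conservation at this step. The saving is local, though. Lemma \ref{L2 Raum Zeit Beschränktheit von nabla u} rests on Lemma \ref{L2 Raum Zeit Beschränktheit von nabla v}, which uses $\int_0^T\io u_\eps^2\le C(T+1)$, so the $L^{m_1}$ bound is still needed upstream.
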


\begin{proof}
 We apply Gagliardo-Nirenberg's inequality to obtain $C_1,C_2>0$ such that for all $\eps>0$
 \begin{align*}
  &\int_0^T\io (u_{\eps}+1)^{m_1+1}=\Vert (u_{\eps}+1)^{\frac{m_1+1}{2}}\Vert_{L^2(\Omega\times(0,T))}^2\\
  \leq &C_1\Vert (u_{\eps}+1)^\frac{m_1+1}{2}\Vert_{L^1(\Omega\times(0,T))}^\frac{2}{n+2}
  \Vert\nabla (u_{\eps}+1)^\frac{m_1+1}{2}\Vert_{L^2(\Omega\times(0,T))}^\frac{n}{n+2}
  +C_2\Vert (u_{\eps}+1)^\frac{m_1+1}{2}\Vert_{L^1(\Omega\times(0,T))}.
 \end{align*}
 Due to Lemma \ref{ueps und veps in Lm1 beschränkt} there is $C_3(T)>0$ such that
 $\Vert (u_{\eps}+1)^\frac{m_1+1}{2}\Vert_{L^1(\Omega\times(0,T))}\leq C_3$ for all $\eps>0$.
 Lemma \ref{L2 Raum Zeit Beschränktheit von nabla u} yields existence of $C_4(T)>0$ such that,
 for every $\eps>0$,
 $\Vert\nabla(u_\eps+1)^\frac{m_1+1}{2}\Vert_{L^2(\Omega\times(0,T))}^\frac{n}{n+2}\leq C_4(T)$. Hence, there is
 $C_5(T)>0$ such that
 \begin{align*}
  \int_0^T\io(u_\eps+1)^{m_1+1}\leq C_5(T)\quad\text{for each }\eps>0.
 \end{align*}
\end{proof}

To derive boundedness of $v_\eps$, we will use the following Alikakos-Moser iteration.
 \begin{lem}\label{Hilfsabschätzung für Beschränktheit von ueps}\cite[Lemma 3.4]{tao2025suppression}
 Let $a\geq 0$ and $b\geq 1$, and suppose that $(\overline{M}_k)_{k\in\N_0}\subseteq[0,\infty)$ is such that
 \begin{align*}
  \overline{M}_k\leq\max\left\{ a^{2^k},b^k\overline{M}^2_{k-1} \right\}\quad\forall k\geq 1.
 \end{align*}
Then
\begin{align*}
 \overline{M}_k^{\frac{1}{2^k}}\leq b^2\cdot\max\{ a,\overline{M}_0 \}\quad\forall k\geq 0.
\end{align*}
\end{lem}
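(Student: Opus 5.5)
The plan is a direct induction on $k$, with inductive claim
\begin{align*}
 \overline{M}_k^{\frac{1}{2^k}}\leq b^{c_k}\max\{a,\overline{M}_0\},\qquad c_k:=\sum_{j=1}^k\frac{j}{2^j}.
\end{align*}
Since $b\geq 1$, the key observation is that $c_k\leq\sum_{j=1}^\infty j2^{-j}=2$ for every $k$, so $b^{c_k}\leq b^2$ and the stated bound follows. Throughout I write $K:=\max\{a,\overline{M}_0\}$.

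For $k=0$ we have $c_0=0$, and the claim reads $\overline{M}_0\leq K$, which is trivial.

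Now assume the claim for $k-1$, with $k\geq 1$. By hypothesis, $\overline{M}_k$ is bounded by one of the two terms in the maximum, and I treat each in turn.
\begin{itemize}
 \item In the first case, $\overline{M}_k\leq a^{2^k}$, so $\overline{M}_k^{2^{-k}}\leq a\leq K\leq b^{c_k}K$, using $b\geq 1$ and $c_k\geq 0$.
 \item In the second case, $\overline{M}_k\leq b^k\overline{M}_{k-1}^2$. Taking the $2^k$-th root gives
\begin{align*}
 \overline{M}_k^{2^{-k}}\leq b^{k2^{-k}}\left(\overline{M}_{k-1}^{2^{-(k-1)}}\right)\leq b^{k2^{-k}}b^{c_{k-1}}K=b^{c_k}K,
\end{align*}
 by the induction hypothesis and monotonicity of $t\mapsto t^{2^{-k}}$ on $[0,\infty)$. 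Nonnegativity of the $\overline{M}_k$ is needed here so that the roots and this monotonicity make sense.
\end{itemize}
This closes the induction.

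It remains to evaluate $\sum_{j\geq 1}j2^{-j}=2$. This follows from $\sum_j jx^j=x/(1-x)^2$ at $x=\tfrac12$. There is no genuine obstacle in the argument. The only point to get right is that the exponents telescope: the factor $b^k$ is diluted by the $2^k$-th root, and this is exactly what makes the series converge and gives the uniform constant $b^2$.
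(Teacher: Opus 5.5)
Your induction is correct and complete: the case split on the maximum, the recursion $c_k=c_{k-1}+k2^{-k}$, and the bound $c_k\le\sum_{j\ge1}j2^{-j}=2$ (which gives $b^{c_k}\le b^2$ because $b\ge1$) together yield exactly the claim. The paper gives no proof of its own and imports the statement from \cite[Lemma 3.4]{tao2025suppression}; your argument is the standard elementary proof of a lemma of this type.
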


We can now show that the family $(v_\eps)_{\eps>0}$ is bounded in $L^\infty(\Omega\times(0,T))$.
The computation is nearly the same as in the linear case from Tao and Winkler \cite{tao2025suppression}.
Since we want to derive convergence, it is necessary to show that the bound is
independend of $\eps$.

\begin{lem}\label{globale Existenz veps}
 Suppose \eqref{Generalvoraussetzungen} and \eqref{Voraussetzungen an die approximierenden Anfangsdaten}.
 Then there is $C>0$ such that for all $\eps>0$ the corresponding classical
 solutions $(u_\eps,v_\eps)$ of \eqref{eps>0-System} satisfy
 \begin{align*}
  \Vert v_{\eps}(\cdot,t)\Vert_{L^\infty(\Omega)}\leq C\quad\forall t\in(0,T_{max,\eps}).
 \end{align*}
\end{lem}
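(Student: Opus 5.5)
We will prove the bound by an Alikakos--Moser iteration for $L^p$ norms of $v_\eps$, with exponents $p_k:=2^k$, and close it with Lemma \ref{Hilfsabschätzung für Beschränktheit von ueps}. The structural fact that makes $v_\eps$ easier than $u_\eps$ is the anisotropic term. Testing the second equation of \eqref{eps>0-System} by $v_\eps^{p-1}$, this term contributes $-(p-1)\io u_\eps v_\eps^{p-2}\vert\nabla v_\eps\vert^2\le 0$, which has a good sign. The diffusion contributes $-(p-1)\io (v_\eps+1)^{m_2-1}v_\eps^{p-2}\vert\nabla v_\eps\vert^2\le -\frac{4(p-1)}{p^2}\io\vert\nabla v_\eps^{p/2}\vert^2$, since $m_2\ge1$. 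Neither term depends on $\eps$, because the cutoff $1+\eps\vert\nabla v_\eps\vert$ appears only in the $u$-equation. Hence
\begin{align*}
\frac1p\frac{d}{dt}\io v_\eps^p+\frac{4(p-1)}{p^2}\io\vert\nabla v_\eps^{p/2}\vert^2+\io v_\eps^p\le \io u_\eps v_\eps^{p-1}.
\end{align*}

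The only dangerous term is the production term $\io u_\eps v_\eps^{p-1}$. Here we use Lemma \ref{ueps und veps in Lm1 beschränkt}: $u_\eps$ is bounded in $L^{m_1}(\Omega)$ uniformly in $t$ and $\eps$, with $m_1>\frac n2$. By Hölder,
\begin{align*}
\io u_\eps v_\eps^{p-1}\le C\Vert v_\eps^{p/2}\Vert_{L^{2q}(\Omega)}^{2(p-1)/p},\qquad q:=\frac{m_1}{m_1-1}<\frac{n}{n-2}.
\end{align*}
Since $2q<\frac{2n}{n-2}$, which is subcritical for the Sobolev embedding (with any $q<\infty$ allowed if $n\le2$), the Gagliardo--Nirenberg inequality interpolates $\Vert v_\eps^{p/2}\Vert_{L^{2q}}$ between $\Vert\nabla v_\eps^{p/2}\Vert_{L^2}$ and $\Vert v_\eps^{p/2}\Vert_{L^1}=\Vert v_\eps\Vert_{L^{p/2}}^{p/2}$, with exponent strictly less than one on the gradient. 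Young's inequality then absorbs the gradient part into $\frac{2(p-1)}{p^2}\io\vert\nabla v_\eps^{p/2}\vert^2$. The remainder is bounded by $C p^{\gamma}\bigl(\io v_\eps^{p/2}\bigr)^2+C$ for some fixed $\gamma>0$ depending only on $n$ and $m_1$. The factor $\frac{p-1}{p}$ coming from the power $v_\eps^{p-1}$ in place of $v_\eps^{p}$ only improves this.

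This gives, with $p=p_k$,
\begin{align*}
\frac{d}{dt}\io v_\eps^{p_k}+\io v_\eps^{p_k}\le C p_k^{\gamma+1}\Bigl(\sup_{t}\io v_\eps^{p_{k-1}}\Bigr)^2+Cp_k .
\end{align*}
Setting $\overline M_k:=\max\{\sup_{t\in(0,T_{max,\eps})}\io v_\eps^{p_k},\ \Vert v_{0,\eps}\Vert_{L^\infty}^{p_k}\vert\Omega\vert+1\}$, the ODE comparison yields $\overline M_k\le\max\{a^{2^k},b^k\overline M_{k-1}^2\}$. Here $b$ is fixed because $p_k^{\gamma+1}=2^{k(\gamma+1)}$, and $a$ comes from $\sup_\eps\Vert v_{0,\eps}\Vert_{L^\infty}<\infty$, which holds by the $C^0$ convergence in \eqref{Voraussetzungen an die approximierenden Anfangsdaten}. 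The starting value $\overline M_0$ is controlled by Lemma \ref{L1 Norm von ueps konstant und L1 Norm von veps beschränkt}. Lemma \ref{Hilfsabschätzung für Beschränktheit von ueps} gives $\Vert v_\eps(\cdot,t)\Vert_{L^{p_k}}\le b^2\max\{a,\overline M_0\}$ for all $k$. Letting $k\to\infty$ gives the claim, with a constant independent of $\eps$ and $t$.

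The main obstacle is the Gagliardo--Nirenberg/Young step. We must track the $p$-dependence precisely enough that the constant grows only geometrically in $k$, so that it can be written as $b^k$, and that the lower-order term enters squared as $\overline M_{k-1}^2$. This is exactly where $m_1>\frac n2$ is used, since it gives $q<\frac{n}{n-2}$.
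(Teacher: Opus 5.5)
Your proposal is correct and follows the paper's proof essentially step by step. Both arguments test the $v_\eps$-equation by $v_\eps^{p-1}$ with $p=2^k$ and discard the favourably signed anisotropic term. Both then estimate the production term by Hölder against the uniform $L^{m_1}$ bound for $u_\eps$ and by Gagliardo--Nirenberg in $L^{\frac{2m_1}{m_1-1}}(\Omega)$; the gradient exponent there is $\frac{n(m_1+1)}{(n+2)m_1}<1$, which is exactly where $m_1>\frac n2$ enters. Both close with Young, ODE comparison and the recursive Alikakos--Moser lemma. The only cosmetic difference is that you keep $v_\eps^{p-1}$ in the production term, whereas the paper first uses Young's inequality $p\,v_\eps^{p-1}\le (p-1)v_\eps^{p}+1$ and then works with $p\io u_\eps v_\eps^{p}$.
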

\begin{proof}
For $l\in\N$ defining
\begin{align*}
 M_{l,\eps}(T):=\max\left\{ 1,\sup_{t\in(0,T)}\io v_{\eps}^{p_l}(\cdot,t) \right\},\quad T\in(0,T_{max,\eps}),
\end{align*}
an application of Lemma \ref{L1 Norm von ueps konstant und L1 Norm von veps beschränkt}
and the choice of $p_l:=2^l$
yield $C_1>0$ by \eqref{Voraussetzungen an die approximierenden Anfangsdaten} such that for every $\eps>0$
\begin{align*}
 M_{0,\eps}(T)=\max\left\{ 1,\sup_{t\in(0,T)}\io v_\eps(\cdot,t) \right\}\leq\overline{M}_{0,\eps}
 :=1+\io u_{0,\eps}+\io v_{0,\eps}\leq 1+C_1\vert\Omega\vert\quad\text{for all } T\in(0,T_{max,\eps}).
\end{align*}
Testing the second equation of \eqref{eps>0-System} by $v_{\eps}^{p_l-1}$ for $l\geq1$
and using Young's inequality imply
\begin{align*}
 \frac{d}{dt}\io v_{\eps}^{p_l}+\io v_{\eps}^{p_l}
 +2\io\vert\nabla v_{\eps}^\frac{p_l}{2}\vert^2\leq p_l\io v_{\eps}^{p_l}u_{\eps}+\io u_{\eps}
 \quad\text{for all } t\in(0,T_{max,\eps}),\:\eps>0.
\end{align*}
Note that for $m_1>\frac{n}{2}$, it holds $\frac{2n(m_1+1)}{(2+n)m_1}<2$.
Then Hölder's, Gagliardo-Nirenberg's, Young's inequality in combination with
Lemma \ref{ueps und veps in Lm1 beschränkt} show the existence of $C_2,C_3>0$ such that
\begin{align*}
 &p_l\io v_{\eps}^{p_l}u_{\eps}\leq p_l\Vert u_{\eps}\Vert_{L^{m_1}(\Omega)}
 \Vert v_{\eps}^\frac{p_l}{2}\Vert_{L^\frac{2m_1}{m_1-1}(\Omega)}^2\\
 \leq&C_2 p_l\left( C_3\Vert\nabla v_{\eps}^\frac{p_l}{2}\Vert_{L^2(\Omega)}^\frac{n(m_1+1)}{(2+n)m_1}
 \Vert v_{\eps}^\frac{p}{2}\Vert_{L^1(\Omega)}^\frac{2m_1-n}{(2+n)m_1}
 +C_3\Vert v_{\eps}^\frac{p_l}{2}\Vert_{L^1(\Omega)} \right)^2\\
 \leq&2C_2C_3^2p_l\Vert\nabla v_{\eps}^\frac{p_l}{2}\Vert_{L^2(\Omega)}^\frac{2n(m_1+1)}{(2+n)m_1}
 \Vert v_{\eps}^\frac{p_l}{2}\Vert_{L^1(\Omega)}^\frac{4m_1-2n}{(2+n)m_1}
 +2C_2C_3^2p_l\Vert v_{\eps}^\frac{p_l}{2}\Vert_{L^1(\Omega)}^2\\
 \leq&2\Vert\nabla v_{\eps}^\frac{p_l}{2}\Vert_{L^2(\Omega)}^2
 +C_4p_l^\kappa\left( \frac{2m_1(2+n)}{n(m_1+1)}\right)^{-\frac{n(1+m_1)}{2m_1-n}}\cdot\frac{2m_1-n}{(2+n)m_1}
 \Vert v_{\eps}^\frac{p_l}{2}\Vert_{L^1(\Omega)}^2\quad\text{for all }\eps>0.
\end{align*}
with $\kappa:=\frac{(2+n)m_1}{2m_1-n}$ and $C_4:=\left( \frac{C_2C_3^2}{4} \right)^\kappa$. Hence,
\begin{align*}
p_l\io v_{\eps}^{p_l}u_{\eps}\leq 2\io\vert\nabla v_{\eps}^\frac{p_l}{2}\vert^2+C_5p_l^\kappa M_{l-1,\eps}^2(T)
\quad\text{for all } T\in(0,T_{max,\eps}),\:\eps>0,
\end{align*}
with $C_5:=C_4\left( \frac{2m_1(2+n)}{n(m_1+1)}\right)^{-\frac{n(1+m_1)}{2m_1-n}}\cdot\frac{2m_1-n}{(2+n)m_1}$.
Since $M_{l-1,\eps}^2(T)p_l^\kappa\geq 1$,
we can apply Lemma \ref{L1 Norm von ueps konstant und L1 Norm von veps beschränkt} to
find a constant $C_6>0$ such that
\begin{align*}
 \io u_{\eps}\leq C_6\leq C_6 p_l^\kappa M_{l-1,\eps}^2(T)\quad\text{for all } t\in(0,T),\:\eps>0.
\end{align*}
Hence,
\begin{align*}
 \frac{d}{dt}\io v_{\eps}^{p_l}+\io v_{\eps}^{p_l}\leq(C_5+C_6)p_l^\kappa M_{l-1,\eps}^2(T)
 \quad\text{for all } t\in(0,T),\:\eps>0.
\end{align*}
This implies
\begin{align*}
 \io v_{\eps}^{p_l}(\cdot,t)\leq\max\left\{ \io v_{0,\eps}^{p_l},(C_5+C_6)p_l^\kappa M_{l-1,\eps}^2(T) \right\}
 \quad\text{for all } t\in(0,T),\:\eps>0.
\end{align*}
Then we have
\begin{align*}
 M_{l,\eps}(T)\leq\max\left\{ 1,\io v_{0,\eps}^{p_l},(C_5+C_6)p_l^\kappa M_{l-1,\eps}^2(T) \right\}
 \leq\max\{ C_1^{p_l}\vert\Omega\vert,(C_5+C_6+1)p_l^\kappa M_{l-1,\eps}^2(T) \},
\end{align*}
 for all $T\in(0,T_{max,\eps})$ and for all $\eps>0$. Now let
\begin{align*}
 \overline{M}_{l,\eps}:=\sup_{T\in(0,T_{max,\eps})} M_{l,\eps}(T),\quad l\in\N,
\end{align*}
then $\overline{M}_{l,\eps}<\infty$ for all $l\geq 0$ and for all $\eps>0$, with
\begin{align*}
 \overline{M}_{l,\eps}
 \leq\max\left\{ (C_1(1+\vert\Omega\vert))^{2^l},((C_5+C_6+1)\cdot 2^\kappa)^l\overline{M}_{l-1,\eps}^2 \right\}
 \quad\text{for all } l\geq 1,\text{ for all }\eps>0.
\end{align*}
Lemma \ref{Hilfsabschätzung für Beschränktheit von ueps} implies
\begin{align*}
 \overline{M}_{l,\eps}^\frac{1}{2^l}\leq((C_5+C_6+1)\cdot 2^\kappa)^2
 \cdot\max\{ \overline{M}_{0,\eps},C_1(1+\vert\Omega\vert) \},
\end{align*}
for all $l\geq1$ and for all $\eps>0$, from which then boundedness follows by taking the limit $l\to\infty$.
\end{proof}

\section{Global existence and boundedness for fixed $\eps>0$}
In this chapter we show boundedness of solutions $(u_\eps,v_\eps)$ of \eqref{eps>0-System} for fixed $\eps>0$,
from which we can conclude global existence,
by means of the extensibility criterion \eqref{Fortsetzungskriterium in Linfty}.
Therefore,
it remains to show that $u_\eps$ is bounded in $L^\infty(\Omega\times(0,\infty))$.
However, note that this bound on the $L^\infty$-norm of $u_\eps$ still depends on $\eps$.

\begin{lem}\label{globale Existenz ueps}
 Suppose \eqref{Generalvoraussetzungen} and \eqref{Voraussetzungen an die approximierenden Anfangsdaten}.
 Let $\eps>0$ and $(u_\eps,v_\eps)$ be a classical solution of \eqref{eps>0-System}.
 Then there is $C(\eps)>0$ such that
 \begin{align*}
  \Vert u_\eps(\cdot,t)\Vert_{L^\infty(\Omega)}\leq C(\eps)
 \end{align*}
for all $t\in(0,T_{\max,\eps})$.
\end{lem}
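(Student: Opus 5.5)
The key observation is that for fixed $\eps>0$ the cross-diffusive flux in the first equation of \eqref{eps>0-System} is bounded in a very strong way:
\begin{align*}
 \left\vert\frac{\nabla v_\eps}{1+\eps\vert\nabla v_\eps\vert}\right\vert\leq\frac{1}{\eps}.
\end{align*}
So the first equation has the form $u_{\eps t}=\nabla\cdot((u_\eps+1)^{m_1-1}\nabla u_\eps)-\nabla\cdot(u_\eps b_\eps)$ with a drift $b_\eps$ satisfying $\Vert b_\eps\Vert_{L^\infty(\Omega\times(0,T_{max,\eps}))}\leq\eps^{-1}$. The proof then no longer needs any information on $v_\eps$ other than this bound. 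I will run a Moser--Alikakos iteration on $u_\eps$ alone and close it with Lemma \ref{Hilfsabschätzung für Beschränktheit von ueps}, in the same way as in Lemma \ref{globale Existenz veps}.

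\textbf{Energy inequality for $L^{p_l}$-norms.} For $p_l:=2^l$, $l\geq1$, I test the first equation with $u_\eps^{p_l-1}$. Using $(u_\eps+1)^{m_1-1}\geq1$, this gives
\begin{align*}
 \frac{1}{p_l}\frac{d}{dt}\io u_\eps^{p_l}+(p_l-1)\io u_\eps^{p_l-2}\vert\nabla u_\eps\vert^2
 \leq\frac{p_l-1}{\eps}\io u_\eps^{p_l-1}\vert\nabla u_\eps\vert.
\end{align*}
Young's inequality absorbs half of the gradient term and leaves $\frac{p_l-1}{2\eps^2}\io u_\eps^{p_l}$. Since $u_\eps^{p_l-2}\vert\nabla u_\eps\vert^2=\frac{4}{p_l^2}\vert\nabla u_\eps^{p_l/2}\vert^2$, and after adding $\io u_\eps^{p_l}$ on both sides, I obtain
\begin{align*}
 \frac{d}{dt}\io u_\eps^{p_l}+\io u_\eps^{p_l}+\frac{2(p_l-1)}{p_l}\io\vert\nabla u_\eps^{p_l/2}\vert^2\leq C(\eps)p_l^2\io u_\eps^{p_l}.
\end{align*}

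\textbf{Interpolation.} I set $\varphi=u_\eps^{p_l/2}$ and apply the Gagliardo--Nirenberg inequality $\Vert\varphi\Vert_{L^2}^2\leq C\Vert\nabla\varphi\Vert_{L^2}^{2a}\Vert\varphi\Vert_{L^1}^{2(1-a)}+C\Vert\varphi\Vert_{L^1}^2$ with $a=\frac{n}{n+2}$. Young's inequality then yields
\begin{align*}
 C(\eps)p_l^2\io u_\eps^{p_l}\leq\io\vert\nabla u_\eps^{p_l/2}\vert^2+C'(\eps)p_l^{\kappa}\left(\io u_\eps^{p_{l-1}}\right)^2
\end{align*}
for some fixed exponent $\kappa$ (here $\kappa=n+2$ works). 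The coefficient $\frac{2(p_l-1)}{p_l}\geq1$ for $l\geq1$, so the gradient term is fully absorbed.

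\textbf{Iteration.} Define $M_{l,\eps}(T):=\max\{1,\sup_{t\in(0,T)}\io u_\eps^{p_l}\}$. The resulting ODI $y'+y\leq C'(\eps)p_l^\kappa M_{l-1,\eps}^2$ gives
\begin{align*}
 M_{l,\eps}\leq\max\left\{\Vert u_{0,\eps}\Vert_{L^\infty}^{2^l}\vert\Omega\vert+1,\;C''(\eps)(2^\kappa)^lM_{l-1,\eps}^2\right\},
\end{align*}
uniformly in $T<T_{max,\eps}$. The starting value $M_{0,\eps}$ is controlled by the mass identity \eqref{L1 Norm von u konstant}. Lemma \ref{Hilfsabschätzung für Beschränktheit von ueps} then bounds $M_{l,\eps}^{1/2^l}$ uniformly in $l$, and letting $l\to\infty$ gives the $L^\infty$ bound with a constant $C(\eps)$.

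\textbf{Main obstacle.} The delicate part is the bookkeeping in the interpolation step. The constant in front of $\io u_\eps^{p_l}$ grows like $p_l^2\eps^{-2}$, and I have to check that after Young's inequality the growth in $l$ is only geometric, of the form $b^l$ with $b$ independent of $l$, so that Lemma \ref{Hilfsabschätzung für Beschränktheit von ueps} applies. The initial datum $u_{0,\eps}\in C^{2+\alpha}(\overline{\Omega})$ is bounded, so the first entry of the maximum is harmless; it may depend on $\eps$, which is allowed here.
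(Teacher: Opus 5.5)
Your proposal is correct and follows the paper's proof essentially step by step: you test with $u_\eps^{p-1}$, use $\bigl|\frac{\nabla v_\eps}{1+\eps|\nabla v_\eps|}\bigr|\le\frac{1}{\eps}$ together with Young's inequality, interpolate via Gagliardo--Nirenberg against $\Vert u_\eps^{p/2}\Vert_{L^1(\Omega)}$, and run the Alikakos--Moser iteration with $p_l=2^l$, started from mass conservation and closed by the recursion lemma \cite[Lemma 3.4]{tao2025suppression}. The only difference is cosmetic: you drop the factor $(u_\eps+1)^{m_1-1}$ at once via $(u_\eps+1)^{m_1-1}\ge 1$, which gives $\frac{p-1}{2\eps^2}\io u_\eps^{p}$ directly, whereas the paper keeps this factor in Young's inequality, obtains $\io u_\eps^{p-m_1+1}$, and needs an extra H\"older--Young step to return to $\io u_\eps^{p}$ plus a constant.
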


\begin{proof}
 We test the first equation of \eqref{eps>0-System} by $u_\eps^{p-1}$, integrate by parts and apply Hölder's and Young's inequality.
 \begin{align*}
  &\frac{1}{p}\frac{d}{dt}\io u_\eps^p=-(p-1)\io u_\eps^{p-2}(u_\eps+1)^{m_1-1}\vert\nabla u_\eps\vert^2
  +(p-1)\io u_\eps^{p-1}\frac{\nabla u_\eps\cdot\nabla v_\eps}{1+\eps\vert\nabla v_\eps\vert}\\
  \leq&-\frac{p-1}{2}\io u_\eps^{p-2}(u_\eps+1)^{m_1-1}\vert\nabla u_\eps\vert^2
  +\frac{p-1}{2}\io u_\eps^{p-m_1+1}
  \frac{\vert\nabla v_\eps\vert^2}{(1+\eps\vert\nabla v_\eps\vert)^2}\\
  \leq&-\frac{p-1}{2}\io u_\eps^{p-2}(u_\eps+1)^{m_1-1}\vert\nabla u_\eps\vert^2
  +\frac{p-1}{2\eps^2}\io u_\eps^{p-m_1+1}\\
  \leq& -\frac{2(p-1)}{p^2}\io\vert\nabla u_\eps^{\frac{p}{2}}\vert^2
  +\frac{p-1}{2\eps^2}\left( \io u_\eps^p \right)^\frac{p-m_1+1}{p}
  \left( \io 1 \right)^\frac{m_1-1}{p}\\
  \leq&-\frac{1}{p}\io\vert\nabla u_\eps^\frac{p}{2}\vert^2+\frac{(p-1)(p-m_1+1)}{2\eps^2}\io u_\eps^p
  +\frac{(p-1)(m_1-1)}{2\eps^2}
  \vert\Omega\vert\\
  \leq&-\frac{1}{p}\io\vert\nabla u_\eps^\frac{p}{2}\vert^2+\frac{p^2}{2\eps^2}\io u_\eps^p
  +\frac{pm_1}{2\eps^2}\vert\Omega\vert,
 \end{align*}
since $-\frac{2(p-1)}{p^2}\leq-\frac{1}{p}$ for $p\geq 2$. Hence,
\begin{align*}
 \frac{1}{p}\frac{d}{dt}\io u_\eps^p+\frac{1}{p}\io u_\eps^p
 \leq-\frac{1}{p}\io\vert\nabla u_\eps^\frac{p}{2}\vert^2
 +\frac{p^2(1+2\eps^2)}{2\eps^2}\io u_\eps^p+\frac{pm_1}{2\eps^2}\vert\Omega\vert.
\end{align*}
Due to Gagliardo-Nirenberg's
and Young's inequalitywe find a constant $C_1>0$
such that
\begin{align*}
 &\io u_\eps^p=\Vert u_\eps^\frac{p}{2}\Vert_{L^2(\Omega)}^2
 \leq \left(C_1\Vert\nabla u_\eps^\frac{p}{2}\Vert_{L^2(\Omega)}^\frac{n}{n+2}
 \Vert u_\eps^\frac{p}{2}\Vert_{L^1(\Omega)}^\frac{2}{n+2}
 +C_1\Vert u_\eps^\frac{p}{2}\Vert_{L^1(\Omega)}\right)^2\\
 \leq&2C_1^2\Vert\nabla u_\eps^\frac{p}{2}\Vert_{L^2(\Omega)}^\frac{2n}{n+2}\Vert u_\eps^\frac{p}{2}\Vert_{L^1(\Omega)}^\frac{4}{n+2}
 +2C_1^2\Vert u_\eps^\frac{p}{2}\Vert_{L^1(\Omega)}^2\\
 \leq &\frac{2\eps^2}{p^3(1+2\eps^2)}\Vert\nabla u_\eps^\frac{p}{2}\Vert_{L^2(\Omega)}^2
 +\left(\frac{2}{n+2}\left( \frac{2(n+2)\eps^2}{p^3n(1+2\eps^2)} \right)^{-\frac{n}{2}}+2C_1^2\right)
 \Vert u_\eps^\frac{p}{2}\Vert_{L^1(\Omega)}^2\\
 \leq&\frac{2\eps^2}{p^3(1+2\eps^2)}\Vert\nabla u_\eps^\frac{p}{2}\Vert_{L^2(\Omega)}^2
 +p^{3n}C_\eps\Vert u_\eps^\frac{p}{2}\Vert_{L^1(\Omega)}^2 \quad\text{on }(0,T_{max,\eps}),
\end{align*}
where $C_\eps:=\frac{2(1+2\eps^2)^\frac{n}{2}}{(n+2)(2^\frac{n}{2}\eps^n)}+2C_1^2$.
For $l\in\N$, let us define $p_l:= 2^l$. Then we have
\begin{align*}
 \frac{d}{dt}\io u_\eps^{p_l}+\io u_\eps^{p_l}\leq p_l^{4n}C_\eps\left( \io u_\eps^\frac{p_l}{2} \right)^2
 +\frac{p_l^{4n}m_1}{2\eps^2}\vert\Omega\vert
\end{align*}
We define for $\eps>0$
\begin{align*}
 M_{l,\eps}(T):=\max\left\{ 1,\sup_{t\in(0,T)}\io u_\eps^{p_l}(\cdot,t) \right\},\quad T\in(0,T_{\max,\eps}).
\end{align*}
Due to $u_{0,\eps}\in C^{2+\alpha}(\overline{\Omega})$ (cf. \eqref{Voraussetzungen an die approximierenden Anfangsdaten})
and $C^{2+\alpha}(\overline{\Omega})\hookrightarrow L^1(\Omega)$, there is $C_2>0$ such that
\begin{align*}
 M_{0,\eps}(T)\leq\overline{M}_{0,\eps}:=1+\io u_{0,\eps}\leq 1+C_2\vert\Omega\vert
 \quad\text{for all }T\in(0,T_{max,\eps}),\text{ for all }\eps>0.
\end{align*}
Since $M_{l-1,\eps}(T)\geq 1$ for all $l\geq 1$,
\begin{align*}
 \frac{d}{dt}\io u_\eps^{p_l}+\io u_\eps^{p_l}\leq p_l^{4n}\tilde{C}_\eps M_{l-1}^2(T)
 \quad\text{for all }T\in(0,T_{max,\eps}),
\end{align*}
where $\tilde{C}_\eps:=C_\eps+\frac{m_1}{2\eps^2}$. This implies
\begin{align*}
 \io u_\eps^{p_l}(\cdot,t)\leq\max\left\{ \io u_0^{p_l}, p_l^{4n}\tilde{C}_\eps M_{l-1}^2(T) \right\},
\end{align*}
for all $t\in(0,T)$. Hence,
\begin{align*}
 M_{l,\eps}(T)\leq\max\left\{ 1,\io u_0^{p_l},p_l^{4n}\tilde{C}_\eps M_{l-1,\eps}(T) \right\}
 \leq\max\left\{ C_2^{p_l}\vert\Omega\vert,p_l^{4n}(\tilde{C}_\eps+1)M_{l-1,\eps}^2(T) \right\},
\end{align*}
for all $T\in(0,T_{max,\eps})$, because $1\leq p_l^{4n}M_{l-1,\eps}^2$.
With $\overline{M}_{l,\eps}:=\sup_{T\in(0,T_{\max,\eps})}M_{l,\eps}(T)$, we conclude
\begin{align*}
 \overline{M}_{l,\eps}
 \leq\max\left\{ \left(C_2(1+\vert\Omega\vert)\right)^{p_l}, p_l^{4n}(\tilde{C}_\eps+1)\overline{M}_{l-1,\eps}^2 \right\},
\end{align*}
hence by Lemma \ref{Hilfsabschätzung für Beschränktheit von ueps},
\begin{align*}
 \overline{M}_{l,\eps}^{\frac{1}{2^l}}\leq 2^{8n}(\tilde{C}_\eps+1)
 \cdot\max\{ \overline{M}_{0,\eps},C_2(1+\vert\Omega\vert) \}
 \quad\text{for all }l\geq 1,
\end{align*}
from which boundedness of $u_\eps$ follows by taking the limit $l\to\infty$.
\end{proof}
Due to that Alikakos-Moser iteration, we can conclude $T_{max,\eps}=\infty$ for all $\eps>0$.
The next step will be deriving bounds for the time derivative to apply the Aubin-Lions-compactness criterion.
This will be an important
ingredience to obtain norm convergence in $L^2(\Omega\times(0,T))$ for $u_\eps$ and $v_\eps$.

\begin{lem}\label{Zeitableitung von ueps im Dual eines Sobolevraums beschränkt}
Suppose \eqref{Generalvoraussetzungen} and \eqref{Voraussetzungen an die approximierenden Anfangsdaten}.
Let $T>0$ and $q>\max\{n,6\}$. Then there is $C(T)>0$ such that for all $\eps>0$ the corresponding
classical solutions $(u_\eps,v_\eps)$ satisfy
 \begin{align*}
  \Vert u_{\eps t}\Vert_{L^1((0,T);\: (W_0^{3,q}(\Omega))^*)}\leq C(T).
 \end{align*}
\end{lem}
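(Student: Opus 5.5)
We argue by duality. Fix $T>0$, $\eps>0$, $t\in(0,T)$ and $\psi\in W_0^{3,q}(\Omega)$ with $\Vert\psi\Vert_{W^{3,q}(\Omega)}\leq 1$. Since $q>n$, the Sobolev embedding gives $W^{3,q}(\Omega)\hookrightarrow C^2(\overline{\Omega})$, so there is $C_1>0$, independent of $\psi$, with $\Vert\psi\Vert_{L^\infty(\Omega)}+\Vert\nabla\psi\Vert_{L^\infty(\Omega)}+\Vert D^2\psi\Vert_{L^\infty(\Omega)}\leq C_1$. We test the first equation of \eqref{eps>0-System} by $\psi$ and integrate by parts once; since $\partial_\nu u_\eps=\partial_\nu v_\eps=0$, no boundary terms appear. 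This gives
\begin{align*}
 \left\vert\io u_{\eps t}(\cdot,t)\psi\right\vert
 \leq\io(u_\eps+1)^{m_1-1}\vert\nabla u_\eps\vert\,\vert\nabla\psi\vert
 +\io u_\eps\frac{\vert\nabla v_\eps\vert}{1+\eps\vert\nabla v_\eps\vert}\vert\nabla\psi\vert .
\end{align*}
The problem thus reduces to bounding the $L^1(\Omega\times(0,T))$ norms of $(u_\eps+1)^{m_1-1}\vert\nabla u_\eps\vert$ and of $u_\eps\vert\nabla v_\eps\vert$, uniformly in $\eps$. We do not want to use the $\eps$-dependent factor $\frac{1}{1+\eps\vert\nabla v_\eps\vert}$, so we simply bound it by $1$.

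For the taxis term, Cauchy--Schwarz gives
\begin{align*}
 \int_0^T\io u_\eps\vert\nabla v_\eps\vert\leq\left(\int_0^T\io u_\eps\right)^{\frac12}\left(\int_0^T\io u_\eps\vert\nabla v_\eps\vert^2\right)^{\frac12}.
\end{align*}
The first factor is bounded by $T\io u_{0,\eps}$ via Lemma \ref{L1 Norm von ueps konstant und L1 Norm von veps beschränkt}. The second is bounded by $C(T+1)$ via Lemma \ref{L2 Raum Zeit Beschränktheit von nabla v}, which now holds on any finite $T$ because $T_{max,\eps}=\infty$. Both bounds are uniform in $\eps$, since $\io u_{0,\eps}$ is bounded by \eqref{Voraussetzungen an die approximierenden Anfangsdaten}.

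The diffusion term is the main obstacle, because the weight $(u_\eps+1)^{m_1-1}$ grows. We split it as
\begin{align*}
 (u_\eps+1)^{m_1-1}\vert\nabla u_\eps\vert=(u_\eps+1)^{\frac{m_1-1}{2}}\cdot(u_\eps+1)^{\frac{m_1-1}{2}}\vert\nabla u_\eps\vert
\end{align*}
and apply Cauchy--Schwarz in space-time. The factor $\int_0^T\io(u_\eps+1)^{m_1-1}\vert\nabla u_\eps\vert^2$ is controlled by Lemma \ref{L2 Raum Zeit Beschränktheit von nabla u}. The other factor, $\int_0^T\io(u_\eps+1)^{m_1-1}$, is controlled by the uniform $L^{m_1}$ bound of Lemma \ref{ueps und veps in Lm1 beschränkt}, since $m_1-1<m_1$ and $\Omega$ is bounded; alternatively one could use Lemma \ref{(u+1)m_1+1 beschränkt in Raum-Zeit Integral}. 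This step only works because these lemmas give exactly the energy quantity with weight $(u_\eps+1)^{m_1-1}$, and the whole plan depends on that.

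Combining the two estimates, we take the supremum over admissible $\psi$ to obtain
\begin{align*}
 \Vert u_{\eps t}(\cdot,t)\Vert_{(W_0^{3,q}(\Omega))^*}\leq C_1\io\Big((u_\eps+1)^{m_1-1}\vert\nabla u_\eps\vert+u_\eps\vert\nabla v_\eps\vert\Big)(\cdot,t),
\end{align*}
and integrate over $(0,T)$ to get the claim with some $C(T)$ independent of $\eps$. The requirement $q>\max\{n,6\}$ is more than this argument needs, since $q>n$ already gives $\nabla\psi\in L^\infty$. The stronger choice is presumably made so that the same dual space can be used later, for example for $v_{\eps t}$ or for Aubin--Lions with $u_\eps^{m_1}$-type quantities.
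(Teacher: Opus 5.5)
Your argument is correct, but it takes a different route from the paper.

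The paper integrates the diffusion term by parts twice, writing $(u_\eps+1)^{m_1-1}\nabla u_\eps=\frac{1}{m_1}\nabla(u_\eps+1)^{m_1}$. That contribution is then bounded by $\frac{1}{m_1}\Vert\Delta\varphi\Vert_{L^\infty(\Omega)}\io(u_\eps+1)^{m_1}$, which the uniform $L^{m_1}$ bound controls pointwise in time. The taxis term is handled by H\"older's inequality with $u_\eps\in L^{m_1}(\Omega)$, $\nabla v_\eps\in L^2(\Omega)$ and $\nabla\varphi\in L^{\frac{2m_1}{m_1-2}}(\Omega)$. After integrating in time, only the bound on $\int_0^T\io\vert\nabla v_\eps\vert^2$ is needed.

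You instead integrate by parts once. You pay for this with the dissipated quantities $\int_0^T\io(u_\eps+1)^{m_1-1}\vert\nabla u_\eps\vert^2$ and $\int_0^T\io u_\eps\vert\nabla v_\eps\vert^2$ from the two space--time energy lemmas, together with mass conservation and the $L^{m_1}$ bound.

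Each version has its own cost and benefit:
\begin{itemize}
\item The paper's version uses second derivatives of the test function, which is where $W^{3,q}(\Omega)\hookrightarrow W^{2,\infty}(\Omega)$ enters. It also needs $\partial_\nu\varphi=0$ on $\partial\Omega$ (supplied by $W_0^{3,q}$) to discard the boundary term of the second integration by parts. In exchange, its diffusion estimate uses no gradient information on $u_\eps$.
\item Your version needs only $\Vert\nabla\psi\Vert_{L^\infty(\Omega)}$, and the boundary terms vanish by $\partial_\nu u_\eps=\partial_\nu v_\eps=0$ whatever $\psi$ is. So it actually gives the bound in the dual of a weaker space; $W^{2,q}(\Omega)$ with $q>n$ would already suffice. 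The price is that it depends on the additional gradient estimate for $u_\eps$, which itself rests on the one for $v_\eps$.
\end{itemize}
Both arguments are uniform in $\eps$.
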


\begin{proof}
 Let $\varphi\in C^2(\overline{\Omega})$. Multiplying the first equation of \eqref{eps>0-System} by $\varphi$
 and integrating over $\Omega$ yield the existence of $C_1>0$ due to Lemma \ref{ueps und veps in Lm1 beschränkt}
 and due to the embedding $W^{3,q}(\Omega)\hookrightarrow W^{2,\infty}(\Omega)$, since $q>n$,
 and $C_2>0$ due to Lemma \ref{ueps und veps in Lm1 beschränkt} such that
 \begin{align*}
&\left\vert\io u_{\eps t}\varphi\right\vert
\leq\frac{1}{m_1}\left\vert\io(u_{\eps}+1)^{m_1}\Delta\varphi\right\vert
 +\left\vert\io u_{\eps}\frac{\nabla v_{\eps}}{1+\eps\vert\nabla v_{\eps}\vert}\cdot\nabla\varphi\right\vert\\
 \leq&\frac{1}{m_1}\Vert\Delta\varphi\Vert_{L^\infty(\Omega\times(0,T))}\io(u_{\eps}+1)^{m_1}
 +\Vert \nabla\varphi\Vert_{L^{\frac{2m_1}{m_1-2}}(\Omega)}
 \left( \io u_{\eps}^{m_1} \right)^\frac{1}{m_1}\left( \io\vert\nabla v_{\eps}\vert^2 \right)^\frac{1}{2}\\
 \leq&C_1\Vert\varphi\Vert_{W^{3,q}(\Omega)}
 +C_2\left( \io\vert\nabla v_{\eps}\vert^2 \right)^\frac{1}{2}\Vert\varphi\Vert_{W^{3,q}(\Omega)}.
 \end{align*}
 This implies
\begin{align*}
 \Vert u_{\eps t}(t)\Vert_{(W_0^{3,q}(\Omega))^*}\leq &C_1
 +C_2\left( \io\vert\nabla v_{\eps}\vert^2 \right)^\frac{1}{2}\quad\text{for all }t\in(0,T).
\end{align*}
Hence, due to Lemma \ref{L2 Raum Zeit Beschränktheit von nabla v} there is $C_3(T)>0$ such that for all $\eps>0$
\begin{align*}
 \int_0^T\Vert u_{\eps t}(t)\Vert_{(W_0^{3,q}(\Omega))^*}\:dt\leq C_1T
 +C_2T^{\frac{1}{2}}\left( \int_0^T\io\vert\nabla v_{\eps}\vert^2 \right)^\frac{1}{2}\leq C_3(T).
\end{align*}
\end{proof}
A similar result can be obtained for the second solution component.

\begin{lem}\label{Zeitableitung von veps im Dual eines Sobolevraums beschränkt}
 Suppose \eqref{Generalvoraussetzungen} and \eqref{Voraussetzungen an die approximierenden Anfangsdaten}.
 Let $T>0$ and $q>\max\{n,6\}$. Then there is $C(T)>0$ such that for all
 $\eps>0$ the corresponding classical solutions $(u_\eps,v_\eps)$ satisfy
 \begin{align*}
  \Vert v_{\eps t}\Vert_{L^1((0,T);\:(W_0^{2,q}(\Omega))^*)}\leq C(T).
 \end{align*}
\end{lem}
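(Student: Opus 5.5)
We argue exactly as in the proof of Lemma \ref{Zeitableitung von ueps im Dual eines Sobolevraums beschränkt}, now with the second equation of \eqref{eps>0-System}. Fix $T>0$, $t\in(0,T)$ and a test function $\varphi\in C^2(\overline{\Omega})$ (by density it suffices to treat such $\varphi$). Since $q>n$, we have $W^{2,q}(\Omega)\hookrightarrow W^{1,\infty}(\Omega)$, so $\Vert\varphi\Vert_{L^\infty(\Omega)}+\Vert\nabla\varphi\Vert_{L^\infty(\Omega)}\leq C\Vert\varphi\Vert_{W^{2,q}(\Omega)}$. We multiply the $v_\eps$-equation by $\varphi$, integrate over $\Omega$, and move the derivative onto $\varphi$ once in both flux terms; the Neumann boundary conditions of $v_\eps$ remove the boundary integrals. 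This gives
\begin{align*}
 \left\vert\io v_{\eps t}\varphi\right\vert
 \leq \io(v_\eps+1)^{m_2-1}\vert\nabla v_\eps\vert\vert\nabla\varphi\vert
 +\io u_\eps\vert\nabla v_\eps\vert\vert\nabla\varphi\vert
 +\io v_\eps\vert\varphi\vert+\io u_\eps\vert\varphi\vert.
\end{align*}

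We then estimate the four terms using only $\eps$-independent bounds from Section 3.

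\emph{First term.} By Lemma \ref{globale Existenz veps}, $(v_\eps+1)^{m_2-1}\leq C$ uniformly in $\eps$ and $t$. Hence this term is bounded by $C\Vert\nabla v_\eps\Vert_{L^2(\Omega)}\Vert\varphi\Vert_{W^{2,q}(\Omega)}$.

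\emph{Last two terms.} These are bounded by $C\Vert\varphi\Vert_{W^{2,q}(\Omega)}$, using Lemma \ref{L1 Norm von ueps konstant und L1 Norm von veps beschränkt} for $\io u_\eps$ and $\io v_\eps$.

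\emph{Anisotropic cross term $\io u_\eps\vert\nabla v_\eps\vert\vert\nabla\varphi\vert$.} This is the only delicate point. Unlike in the $u_\eps$-equation, there is no factor $\frac{1}{1+\eps\vert\nabla v_\eps\vert}$ that would make the flux bounded. The natural remedy is to split $u_\eps=u_\eps^{1/2}\cdot u_\eps^{1/2}$ and apply Cauchy--Schwarz:
\begin{align*}
 \io u_\eps\vert\nabla v_\eps\vert\vert\nabla\varphi\vert\leq\Vert\nabla\varphi\Vert_{L^\infty(\Omega)}\left(\io u_\eps\right)^{\frac12}\left(\io u_\eps\vert\nabla v_\eps\vert^2\right)^{\frac12}.
\end{align*}
The factor $\io u_\eps$ is constant by \eqref{L1 Norm von u konstant}, so this term is bounded by $C\left(\io u_\eps\vert\nabla v_\eps\vert^2\right)^{1/2}\Vert\varphi\Vert_{W^{2,q}(\Omega)}$.

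Taking the supremum over $\varphi$ with $\Vert\varphi\Vert_{W^{2,q}(\Omega)}\leq1$ yields
\begin{align*}
 \Vert v_{\eps t}(t)\Vert_{(W_0^{2,q}(\Omega))^*}\leq C_1+C_2\left(\io\vert\nabla v_\eps\vert^2\right)^{\frac12}+C_3\left(\io u_\eps\vert\nabla v_\eps\vert^2\right)^{\frac12}
 \quad\text{for all }t\in(0,T).
\end{align*}
We integrate this over $(0,T)$ and apply the Cauchy--Schwarz inequality in time. Both space-time integrals $\int_0^T\io\vert\nabla v_\eps\vert^2$ and $\int_0^T\io u_\eps\vert\nabla v_\eps\vert^2$ are bounded by $C(T+1)$ according to Lemma \ref{L2 Raum Zeit Beschränktheit von nabla v}. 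We therefore obtain
\begin{align*}
 \int_0^T\Vert v_{\eps t}(t)\Vert_{(W_0^{2,q}(\Omega))^*}\,dt\leq C_1T+(C_2+C_3)T^{\frac12}\left(C(T+1)\right)^{\frac12}=:C(T),
\end{align*}
and $C(T)$ is independent of $\eps$.

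The main obstacle is the cross term. The weighted dissipation $\int_0^T\io u_\eps\vert\nabla v_\eps\vert^2$ from Lemma \ref{L2 Raum Zeit Beschränktheit von nabla v} is exactly what controls it. Alternatively one could use $u_\eps\in L^\infty((0,T);L^{m_1}(\Omega))$ from Lemma \ref{ueps und veps in Lm1 beschränkt}, Hölder's inequality and $\nabla\varphi\in L^\infty$, as was done for the $u_\eps$-equation.
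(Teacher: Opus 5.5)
Your argument is correct. Its skeleton is the paper's: test the $v_\eps$-equation, bound each term at fixed $t$ by an $\eps$-independent quantity times the test-function norm, then integrate in time with Cauchy--Schwarz and invoke Lemma \ref{L2 Raum Zeit Beschränktheit von nabla v}.

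The real difference is in the anisotropic cross term. The paper handles $\io u_\eps\nabla v_\eps\cdot\nabla\psi$ by a three-factor Hölder inequality. It combines $u_\eps\in L^\infty((0,T);L^{m_1}(\Omega))$ from Lemma \ref{ueps und veps in Lm1 beschränkt}, $\nabla v_\eps\in L^2(\Omega)$ and $\nabla\psi\in L^{\frac{2m_1}{m_1-2}}(\Omega)$, exactly as it did for $u_{\eps t}$ in Lemma \ref{Zeitableitung von ueps im Dual eines Sobolevraums beschränkt}. For the remaining terms it uses $L^{6/5}$--$L^6$ pairings. You instead put the weight $u_\eps$ on the gradient and use two ingredients: mass conservation, and the dissipation $\int_0^T\io u_\eps\vert\nabla v_\eps\vert^2$ that the cross-diffusion term itself produces when the $v_\eps$-equation is tested by $v_\eps$. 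You control all other terms by the masses and $\Vert\varphi\Vert_{W^{1,\infty}(\Omega)}$.

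Your route is slightly more economical and exploits the structure of the system; no Hölder exponent depending on $m_1$ is needed at this stage. The paper's route has the advantage of keeping the two time-derivative lemmas parallel. Note, though, that the bound on $\int_0^T\io u_\eps\vert\nabla v_\eps\vert^2$ in Lemma \ref{L2 Raum Zeit Beschränktheit von nabla v} was itself derived from $\io u_\eps^2\leq C$, i.e.\ from the $L^{m_1}$ estimate with $m_1\geq 2$. So both proofs ultimately rest on the same a priori bounds.
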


\begin{proof}
 Let $\psi\in C^1(\overline{\Omega})$. Multiplying the second equation of \eqref{eps>0-System} by $\psi$
 and integrating over $\Omega$ yield existence of $C_1>0$ due to Lemma \ref{globale Existenz veps} and
 $C_2,C_3>0$ due to Lemma \ref{ueps und veps in Lm1 beschränkt} such that for all $\eps>0$
\begin{align*}
  &\left\vert\io v_{\eps t}\psi\right\vert
  \leq\left\vert \io(v_{\eps}+1)^{m_2-1}\nabla v_{\eps}\cdot\nabla\psi \right\vert
  +\left\vert \io u_{\eps}\nabla v_{\eps}\cdot\nabla\psi \right\vert+\left\vert \io v_{\eps}\psi \right\vert
  +\left\vert \io u_{\eps}\psi \right\vert\\
  \leq&\Vert(v_{\eps}+1)\Vert_{L^\infty(\Omega)}^{m_2-1}
  \left( \io\vert\nabla v_{\eps}\vert^\frac{6}{5} \right)^\frac{5}{6}
  \left( \io\vert\nabla\psi\vert^6 \right)^\frac{1}{6}
  +\Vert\nabla\psi\Vert_{L^{\frac{2m_1}{m_1-2}}(\Omega)}
  \left( \io u_{\eps}^{m_1} \right)^\frac{1}{m_1}\left( \io\vert\nabla v_{\eps}\vert^2 \right)^\frac{1}{2}\\
  &+\left( \io v_{\eps}^\frac{6}{5} \right)^\frac{5}{6}\left( \io\psi^6 \right)^\frac{1}{6}
  +\left( \io u_{\eps}^\frac{6}{5} \right)^\frac{5}{6}\left( \io\psi^6 \right)^{\frac{1}{6}}\\
  \leq&C_1\left( \io\vert\nabla v_{\eps}\vert^\frac{6}{5} \right)^\frac{5}{6}\Vert\psi\Vert_{W^{2,q}(\Omega)}
  +C_2\left( \io\vert\nabla v_{\eps}\vert^2 \right)^\frac{1}{2}\Vert\psi\Vert_{W^{2,q}(\Omega)}
  +C_3\Vert\psi\Vert_{W^{2,q}(\Omega)}.
 \end{align*}
Hence, by Lemma \ref{L2 Raum Zeit Beschränktheit von nabla v} there is $C_4(T)>0$ such that for all $t\in(0,T)$
and for all $\eps>0$
\begin{align*}
 \Vert v_{\eps t}(t)\Vert_{(W_0^{2,q}(\Omega))^*}
 \leq C_1\left( \io\vert\nabla v_{\eps}\vert^\frac{6}{5} \right)^\frac{5}{6}
 +C_2\left( \io\vert\nabla v_{\eps}\vert^2 \right)^\frac{1}{2}+C_3.
\end{align*}
This implies
\begin{align*}
 \int_0^T\Vert v_{\eps t}(t)\Vert_{(W_0^{2,q}(\Omega))^*}\:dt
 \leq C_1T^\frac{1}{6}\left( \int_0^T\io\vert\nabla v_{\eps}\vert^\frac{6}{5} \right)^
 \frac{5}{6}+C_2T^\frac{1}{2}\left( \int_0^T\io\vert\nabla v_{\eps}\vert^2 \right)^\frac{1}{2}+C_3T
 \leq C_4(T),
 \end{align*}
 for all $t\in(0,T)$ and for all $\eps>0$.
\end{proof}
 With these bounds we are now able to show some convergence results.

 \section{Convergence}
Due to the bounds obtained in the previous chapters, we can now show some convergence results for
solutions of \eqref{eps>0-System}.
\begin{lem}\label{schwache Konvergenzen von u v und Gradienten}
Suppose \eqref{Generalvoraussetzungen} and \eqref{Voraussetzungen an die approximierenden Anfangsdaten}.
Let $(\eps_k)_{k\in\N}\subseteq(0,\infty)$ be a nullsequence and let
$((u_{\eps_k},v_{\eps_k}))_{k\in\N}$ be a sequence of the corresponding classical solutions of \eqref{eps>0-System}.
Moreover, let $T\in(0,\infty)$.
Then there are a subsequence $((u_{\eps_{k_j}},v_{\eps_{k_j}}))_{j\in\N}$ and a pair of functions
 \begin{align*}
   (u,v)\in \left(L^\infty((0,\infty);L^{m_1}(\Omega))\cap L^2((0,T);\:W^{1,2}(\Omega))\right)\times\left(
 L^\infty(\Omega\times(0,T))\cap L^2((0,T);\:W^{1,2}(\Omega))\right)
 \end{align*}
 such that
 \begin{align}
  &u_{\eps_{k_j}}\overset{*}{\rightharpoonup} u\quad\text{in }L^\infty((0,T);\:L^{m_1}(\Omega)),
  \label{schwache Konvergenz in Lm1}\\
  &v_{\eps_{k_j}}\overset{*}{\rightharpoonup} v\quad\text{in }L^\infty((0,T);\:L^p(\Omega))
  \quad\forall p\in(1,\infty],\label{schwache Konvergenz von v in Lp}\\
  &\nabla u_{\eps_{k_j}}\rightharpoonup\nabla u\quad\text{in }L^2(\Omega\times(0,T)),\label{nabla ueps schwach nabla u}\\
  &\nabla v_{\eps_{k_j}}\rightharpoonup\nabla v\quad\text{in }L^2(\Omega\times(0,T)),\label{nabla veps schwach nabla v}
 \end{align}
as $j\to\infty$.
\end{lem}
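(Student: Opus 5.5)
This is a compactness statement, and I would prove it with Banach--Alaoglu (weak and weak-$*$ sequential compactness in reflexive spaces and in duals of separable spaces), fed by the $\eps$-independent bounds from Section 3, followed by identifying the limits of the gradients with the gradients of the limits.

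\textbf{Step 1 (bounds).} By Lemma \ref{ueps und veps in Lm1 beschränkt}, $(u_{\eps_k})_k$ is bounded in $L^\infty((0,\infty);L^{m_1}(\Omega))$. Since $L^{m_1}(\Omega)$ is reflexive and separable, this space is the dual of $L^1((0,T);L^{m_1/(m_1-1)}(\Omega))$, so a weak-$*$ convergent subsequence exists; this gives \eqref{schwache Konvergenz in Lm1}. To obtain $u\in L^\infty((0,\infty);L^{m_1}(\Omega))$ rather than only on $(0,T)$, I would run the extraction on $(0,N)$ for every $N\in\N$ and take a diagonal subsequence. The weak-$*$ lower semicontinuity of the norm then bounds $\Vert u\Vert_{L^\infty((0,N);L^{m_1})}$ by the constant of Lemma \ref{ueps und veps in Lm1 beschränkt}, which does not depend on $N$.

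For $v$, Lemma \ref{globale Existenz veps} gives a uniform bound in $L^\infty(\Omega\times(0,T))$. Since $\Omega$ is bounded, this also gives uniform bounds in $L^\infty((0,T);L^p(\Omega))$ for every $p\in(1,\infty]$. A single weak-$*$ limit in $L^\infty(\Omega\times(0,T))=(L^1(\Omega\times(0,T)))^*$ then yields \eqref{schwache Konvergenz von v in Lp} simultaneously for all $p$. The reason is that for every $p$ the predual $L^1((0,T);L^{p'}(\Omega))$ embeds into $L^1(\Omega\times(0,T))$, so testing against it is covered by the one limit. Next, Lemmas \ref{L2 Raum Zeit Beschränktheit von nabla u} and \ref{L2 Raum Zeit Beschränktheit von nabla v} bound $\nabla u_{\eps_k}$ and $\nabla v_{\eps_k}$ in $L^2(\Omega\times(0,T))$, uniformly in $k$. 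The same lemmas bound $u_{\eps_k}$ in $L^\infty((0,T);L^2(\Omega))$ and $v_{\eps_k}$ in $L^2(\Omega\times(0,T))$. Hence both sequences are bounded in $L^2((0,T);W^{1,2}(\Omega))$, which is a Hilbert space, so further weakly convergent subsequences exist.

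\textbf{Step 2 (identification).} After these extractions, $\nabla u_{\eps_{k_j}}\rightharpoonup z_1$ and $\nabla v_{\eps_{k_j}}\rightharpoonup z_2$ in $L^2(\Omega\times(0,T))$ for some limits $z_1,z_2$. To show $z_1=\nabla u$, take $\Phi\in C_0^\infty(\Omega\times(0,T);\R^n)$. Then
\begin{align*}
\int_0^T\io \nabla u_{\eps_{k_j}}\cdot\Phi=-\int_0^T\io u_{\eps_{k_j}}\nabla\cdot\Phi .
\end{align*}
The left side converges to $\int_0^T\io z_1\cdot\Phi$. The right side converges to $-\int_0^T\io u\,\nabla\cdot\Phi$, because $\nabla\cdot\Phi$ belongs to the predual $L^1((0,T);L^{m_1/(m_1-1)}(\Omega))$. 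Hence $z_1=\nabla u$ as distributions, so $u\in L^2((0,T);W^{1,2}(\Omega))$ and \eqref{nabla ueps schwach nabla u} holds. The same argument, using \eqref{schwache Konvergenz von v in Lp}, gives $z_2=\nabla v$ and \eqref{nabla veps schwach nabla v}.

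I do not expect a genuine obstacle here; the step needing most care is bookkeeping. All limits must be taken along one common subsequence, which is done by nesting the successive extractions (and using the diagonal argument for the statement on $(0,\infty)$). One also has to check that the weak-$*$ limits obtained in different spaces coincide. This follows because every one of these convergences implies convergence in $\mathcal D'(\Omega\times(0,T))$, and distributional limits are unique.
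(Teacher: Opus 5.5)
Your proposal is correct and follows essentially the same route as the paper. The paper applies Banach--Alaoglu to the $\eps$-independent $L^{m_1}$ bound for $u_\eps$ and $L^\infty$ bound for $v_\eps$, extracts weak $L^2(\Omega\times(0,T))$ limits of $\nabla u_\eps$ and $\nabla v_\eps$ from the space-time gradient bounds, and identifies them as $\nabla u$ and $\nabla v$ by integrating by parts against compactly supported test functions. Your additional bookkeeping (the diagonal argument giving $u\in L^\infty((0,\infty);L^{m_1}(\Omega))$, and obtaining every $p$ from a single weak-$*$ limit in $L^\infty(\Omega\times(0,T))$) only makes explicit details the paper leaves implicit.
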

\begin{proof}
 The convergence statements \eqref{schwache Konvergenz in Lm1} and \eqref{schwache Konvergenz von v in Lp}
 are direct consequences of the Banach-Alaoglu theorem, along with the boundedness obtained in
 Lemma \ref{ueps und veps in Lm1 beschränkt} and Lemma \ref{globale Existenz veps}.
 For the gradients, due to Lemma \ref{L2 Raum Zeit Beschränktheit von nabla u},
 there is $z\in L^2(\Omega\times(0,T))$ such that, possibly along a further subsequence,
 $\nabla u_{\eps_{k_j}}\rightharpoonup z$ in $L^2(\Omega\times(0,T))$.
 By \eqref{schwache Konvergenz in Lm1}, this implies
 \begin{align*}
  -\int_0^T\io u\nabla\varphi\leftarrow-\int_0^T\io u_{\eps_{k_j}}\nabla\varphi
  =\int_0^T\io\varphi\nabla u_{\eps_{k_j}}\to\int_0^T\io z\varphi
 \end{align*}
for all $\varphi\in C_c^\infty(\Omega\times(0,T))$. Hence, $z=\nabla u$ in $L^2(\Omega\times(0,T))$.
By the same argument with Lemma \ref{L2 Raum Zeit Beschränktheit von nabla v}, it follows that
$\nabla v_{\eps_{k_j}}\rightharpoonup\nabla v$ in $L^2(\Omega\times(0,T))$.
\end{proof}

Since we want to have weak convergence of terms like $u_{\eps_k}\nabla v_{\eps_k}$,
we need norm convergence of one factor. To derive
that, we recall the following statement of the Aubin-Lions compactness criterion.
\begin{lem}\label{Relativ kompakte Mengen in Lp Simon}\cite[Chapter 8, Corollary 4]{simon1986}
 Let $X,Y,Z$ be Banach spaces with $X\overset{c}{\hookrightarrow}Y\hookrightarrow Z$ and $p\in[1,\infty)$.
 Moreover, let $\mathcal{F}\subseteq L^p((0,T);\:X)$ be a bounded set and let
 $\frac{\partial\mathcal{F}}{\partial t}=\left\{ \frac{\partial f}{\partial t}\mid f\in\mathcal{F}\right\}$ be bounded in
 $L^1((0,T);\:Z)$. Then $\mathcal{F}$ is relatively compact in $L^p((0,T);\:Y)$.
\end{lem}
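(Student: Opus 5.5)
The plan is to reduce the statement to Simon's characterisation of relatively compact sets in $L^p((0,T);\:Y)$. According to it, a set $\mathcal{F}$ is relatively compact there as soon as two conditions hold:
\begin{itemize}
\item[(a)] for all $0<t_1<t_2<T$ the set $\{\int_{t_1}^{t_2}f\,dt\mid f\in\mathcal{F}\}$ is relatively compact in $Y$;
\item[(b)] $\Vert f(\cdot+h)-f\Vert_{L^p((0,T-h);\:Y)}\to0$ as $h\searrow0$, uniformly in $f\in\mathcal{F}$.
\end{itemize}
I would take this characterisation as the backbone and only sketch its proof at the end, since that is where the real work sits.

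\emph{Condition (a).} By Hölder's inequality,
\begin{align*}
\Big\Vert\int_{t_1}^{t_2}f\Big\Vert_X\leq (t_2-t_1)^{1-\frac1p}\Vert f\Vert_{L^p((0,T);\:X)}.
\end{align*}
So these integrals form a bounded subset of $X$. Since $X\overset{c}{\hookrightarrow}Y$, the set is relatively compact in $Y$.

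\emph{Condition (b).} First I would prove an Ehrling-type inequality by the usual contradiction argument, using the compact embedding and the injectivity of $Y\hookrightarrow Z$: for every $\eta>0$ there is $C_\eta>0$ with
\begin{align*}
\Vert w\Vert_Y\leq\eta\Vert w\Vert_X+C_\eta\Vert w\Vert_Z\quad\text{for all } w\in X.
\end{align*}
Next I would control translations in $Z$.
\begin{itemize}
\item Since $f\in L^p((0,T);\:Z)$ and $f_t\in L^1((0,T);\:Z)$, each $f$ lies in $W^{1,1}((0,T);\:Z)\subseteq C([0,T];\:Z)$. Hence $\mathcal{F}$ is bounded in $L^\infty((0,T);\:Z)$ by some $K$, controlled by the two given bounds.
\item From $f(t+h)-f(t)=\int_t^{t+h}f_t$ and Fubini I get $\Vert f(\cdot+h)-f\Vert_{L^1((0,T-h);\:Z)}\leq h\Vert f_t\Vert_{L^1((0,T);\:Z)}$.
\item Interpolating between $L^1$ and $L^\infty$ then gives
\begin{align*}
\Vert f(\cdot+h)-f\Vert_{L^p((0,T-h);\:Z)}^p\leq(2K)^{p-1}Ch.
\end{align*}
\end{itemize}
Applying Ehrling pointwise in time and integrating, I obtain
\begin{align*}
\Vert f(\cdot+h)-f\Vert_{L^p((0,T-h);\:Y)}\leq 2\eta\sup_{\mathcal{F}}\Vert f\Vert_{L^p((0,T);\:X)}+C_\eta C' h^{\frac1p}.
\end{align*}
Choosing $\eta$ small first and then $h$ small gives (b), uniformly in $f\in\mathcal{F}$.

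\emph{Proving the characterisation (main obstacle).} For $h>0$ I would introduce the time averages $f_h(t):=\frac1h\int_t^{t+h}f$ on $[0,T-h]$ and proceed in three steps.
\begin{itemize}
\item \emph{Compactness of averages.} By (a) the values $f_h(t)$ lie in a fixed relatively compact subset of $Y$. The family $\{f_h\}_{f\in\mathcal{F}}$ is equicontinuous in $Y$, because
\begin{align*}
\Vert f_h(t)-f_h(s)\Vert_Y\leq h^{-1}\vert t-s\vert^{1-\frac1p}\cdot2\Vert f\Vert_{L^p(Y)},
\end{align*}
and $\mathcal{F}$ is bounded in $L^p(Y)$ thanks to $X\hookrightarrow Y$. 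Arzelà--Ascoli then makes $\{f_h\}$ relatively compact in $C([0,T-h];\:Y)$, hence in $L^p((0,T-h);\:Y)$.
\item \emph{Approximation.} Minkowski's integral inequality together with (b) gives $\Vert f_h-f\Vert_{L^p((0,T-h);\:Y)}\leq\sup_{s\leq h}\Vert f(\cdot+s)-f\Vert_{L^p((0,T-h);\:Y)}$, which is small uniformly in $f\in\mathcal{F}$. So $\mathcal{F}$ restricted to $(0,T-h)$ is totally bounded.
\item \emph{The end strip.} To cover $(T-h,T)$ I would use the backward averages $\frac1h\int_{t-h}^tf$ symmetrically on $(h,T)$.
\end{itemize}
Gluing the two halves gives total boundedness of $\mathcal{F}$ in $L^p((0,T);\:Y)$, which is the claim.

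The delicate point I expect is the uniformity of the approximation $f_h\to f$ near the endpoints, together with the case $p=1$. Both are handled by the same translation estimate, which holds uniformly on $\mathcal{F}$.
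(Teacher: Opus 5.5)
Your argument is correct and is essentially Simon's own proof of the cited Corollary 4; the paper itself gives no proof, only the citation. Simon's proof combines his characterization of relatively compact sets in $L^p((0,T);\:Y)$ via conditions (a) and (b), Ehrling's inequality, and the $L^1$--$L^\infty$ interpolation of translations in $Z$, exactly as you do. The one step to tighten is equicontinuity of $\{f_h\}$ when $p=1$, where your displayed bound gives nothing: as you anticipated, write $f_h(t+\delta)-f_h(t)=\frac1h\int_t^{t+h}\big(f(r+\delta)-f(r)\big)\,dr$, so that $\Vert f_h(t+\delta)-f_h(t)\Vert_Y\le h^{-1}\Vert f(\cdot+\delta)-f\Vert_{L^1((0,T-\delta);\:Y)}$, which tends to $0$ uniformly in $f\in\mathcal{F}$ by (b).
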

Thanks to the previous result we can now show norm convergence of $u_\eps$ and $v_\eps$ in $L^2(\Omega\times(0,T))$.

\begin{lem}\label{Normkonvergenzen von ueps und veps}
Suppose \eqref{Generalvoraussetzungen} and \eqref{Voraussetzungen an die approximierenden Anfangsdaten}. Let
$(\eps_k)_{k\in\N}\subseteq(0,\infty)$ be a nullsequence and $((u_{\eps_{k}},v_{\eps_{k}}))_{k\in\N}$
a corresponding sequence of classical solutions of \eqref{eps>0-System}.
 Then there are a subsequence $(u_{\eps_{k_j}},v_{\eps_{k_j}})_{j\in\N}$ and a pair of functions
 \begin{align*}
  (u,v)\in &\left(L^\infty((0,\infty);L^{m_1}(\Omega))\cap L^2_{loc}([0,\infty);\:W^{1,2}(\Omega))\right)\times\\
  &\left(L^\infty(\Omega\times(0,\infty))\cap L^2_{loc}([0,\infty);\:W^{1,2}(\Omega))\right)
 \end{align*}
 such that for every $T>0$
 \begin{align}
  &v_{\eps_{k_j}}\to v\quad\text{in }L^2(\Omega\times(0,T)),\label{veps to v in L2}\\
  &u_{\eps_{k_j}}\to u\quad\text{in }L^2(\Omega\times(0,T)),\label{ueps to u in L2}\\
  &(u_{\eps_{k_j}}+1)^{m_1}\rightharpoonup(u+1)^{m_1}\quad\text{in }L^{\frac{m_1+1}{m_1}}(\Omega\times(0,T)),
  \label{(u+1)m1 schwach in Lm1+1m1}
 \end{align}
\end{lem}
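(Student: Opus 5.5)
The plan is to apply the Aubin--Lions criterion of Lemma \ref{Relativ kompakte Mengen in Lp Simon} separately to $(u_{\eps_k})$ and $(v_{\eps_k})$, with $p=2$. We take $X=W^{1,2}(\Omega)$, $Y=L^2(\Omega)$ and, respectively, $Z=(W_0^{3,q}(\Omega))^*$ or $Z=(W_0^{2,q}(\Omega))^*$ with $q>\max\{n,6\}$. The embedding $W^{1,2}(\Omega)\overset{c}{\hookrightarrow}L^2(\Omega)$ is Rellich's theorem. The continuous embedding $L^2(\Omega)\hookrightarrow Z$ holds because $W_0^{j,q}(\Omega)\hookrightarrow L^2(\Omega)$ for $q\geq 2$.

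Boundedness of $(u_{\eps_k})$ in $L^2((0,T);W^{1,2}(\Omega))$ follows from Lemma \ref{ueps und veps in Lm1 beschränkt}, since $m_1\geq 2$ gives a uniform $L^\infty((0,T);L^2(\Omega))$ bound, together with Lemma \ref{L2 Raum Zeit Beschränktheit von nabla u}. For $v_{\eps_k}$ we use Lemma \ref{globale Existenz veps} and Lemma \ref{L2 Raum Zeit Beschränktheit von nabla v}. The time-derivative bounds in $L^1((0,T);Z)$ are exactly Lemma \ref{Zeitableitung von ueps im Dual eines Sobolevraums beschränkt} and Lemma \ref{Zeitableitung von veps im Dual eines Sobolevraums beschränkt}.

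Relative compactness then gives, for each fixed $T$, a subsequence converging in $L^2(\Omega\times(0,T))$. To get one subsequence valid for every $T>0$ we use $T=N\in\N$ and a diagonal argument. By uniqueness of limits, the strong limits coincide with the weak-$*$ limits $(u,v)$ from Lemma \ref{schwache Konvergenzen von u v und Gradienten}, applied successively on $(0,N)$. This also yields $u\in L^2_{loc}([0,\infty);W^{1,2}(\Omega))$ and $v\in L^2_{loc}([0,\infty);W^{1,2}(\Omega))$.

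For the global memberships $u\in L^\infty((0,\infty);L^{m_1}(\Omega))$ and $v\in L^\infty(\Omega\times(0,\infty))$, we pass to a further subsequence converging a.e. in $\Omega\times(0,\infty)$. Fatou's lemma, applied for a.e. $t$ to the $\eps$-independent bounds of Lemma \ref{ueps und veps in Lm1 beschränkt} and Lemma \ref{globale Existenz veps}, then gives these bounds for the limit.

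For \eqref{(u+1)m1 schwach in Lm1+1m1}, Lemma \ref{(u+1)m_1+1 beschränkt in Raum-Zeit Integral} shows that $(u_{\eps_k}+1)^{m_1}$ is bounded in $L^{\frac{m_1+1}{m_1}}(\Omega\times(0,T))$. Its $T$-dependent constant is uniform in $\eps$, since the estimates used there are uniform. Because $\frac{m_1+1}{m_1}\in(1,\infty)$ is a reflexive exponent, a further subsequence converges weakly to some $w$. Since $u_{\eps_{k_j}}\to u$ a.e., we have $(u_{\eps_{k_j}}+1)^{m_1}\to(u+1)^{m_1}$ a.e. The bound in the space with exponent strictly greater than $1$ gives uniform integrability, so Vitali's theorem yields strong convergence in $L^1(\Omega\times(0,T))$. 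Hence $w=(u+1)^{m_1}$, and the whole subsequence converges weakly. A diagonal argument over $T=N$ again finishes.

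The main obstacle is mostly bookkeeping. The subsequence and the limit pair must be the same for all $T$ and compatible with Lemma \ref{schwache Konvergenzen von u v und Gradienten}, and the limit of the nonlinear quantity must be identified. The Aubin--Lions step itself is immediate once the spaces are fixed.
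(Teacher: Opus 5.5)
Your proposal is correct and follows the paper's proof. The paper likewise applies the Aubin--Lions criterion of Lemma \ref{Relativ kompakte Mengen in Lp Simon} with $W^{1,2}(\Omega)\overset{c}{\hookrightarrow}L^2(\Omega)\hookrightarrow(W_0^{2,q}(\Omega))^*$ (resp.\ $(W_0^{3,q}(\Omega))^*$), using the same gradient and time-derivative bounds. It then passes to an a.e.\ convergent subsequence and identifies the weak limit of $(u_{\eps_{k_j}}+1)^{m_1}$ in $L^{\frac{m_1+1}{m_1}}(\Omega\times(0,T))$ via Lemma \ref{(u+1)m_1+1 beschränkt in Raum-Zeit Integral}. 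Your diagonal argument over $T=N$, the Fatou step giving the global bounds on $(0,\infty)$, and the Vitali argument for the identification only make explicit the bookkeeping that the paper leaves implicit.
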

\begin{proof}
 Due to the embedding $W^{1,2}(\Omega)\overset{c}{\hookrightarrow}L^2(\Omega)\hookrightarrow (W_0^{2,q}(\Omega))^*$ with $q>n$,
 Lemma \ref{L2 Raum Zeit Beschränktheit von nabla v} and Lemma \ref{Zeitableitung von veps im Dual eines Sobolevraums beschränkt},
  we can apply Lemma \ref{Relativ kompakte Mengen in Lp Simon}.
Hence, there are a subsequence $(v_{\eps_{k_j}})_{j\in\N}$ such that
\begin{align*}
 v_{\eps_{k_j}}\to v\quad\text{in }L^2(\Omega\times(0,T))\quad\text{as }j\to\infty.
\end{align*}
Due to the embedding $W^{1,2}(\Omega)\overset{c}{\hookrightarrow}L^2(\Omega)\hookrightarrow(W_0^{3,q}(\Omega))^*$ with $q>n$,
Lemma \ref{L2 Raum Zeit Beschränktheit von nabla u} and Lemma
\ref{Zeitableitung von ueps im Dual eines Sobolevraums beschränkt},
we can apply Lemma \ref{Relativ kompakte Mengen in Lp Simon} again. Hence, there is a subsequence
$(u_{\eps_{k_j}})_{j\in\N}$ such that
\begin{align*}
 u_{\eps_{k_j}}\to u\quad\text{in }L^2(\Omega\times(0,T))\quad\text{as }j\to\infty.
\end{align*}
This implies for a subsequence $u_{\eps_{k_{j_l}}}\to u$ pointwise almost everywhere in $\Omega\times(0,T)$
as $l\to\infty$.
Since the function $s\mapsto(s+1)^{m_1}$ is continuous, it holds $(u_{\eps_{k_{j_l}}}+1)^{m_1}\to
(u+1)^{m_1}$ pointwise almost everywhere in $\Omega\times(0,T)$ as $l\to\infty$. Moreover, by Lemma
\ref{(u+1)m_1+1 beschränkt in Raum-Zeit Integral} and the Eberlein-Smulian theorem there
is $z\in L^{\frac{m_1+1}{m_1}}(\Omega\times(0,T))$ such that $(u_{\eps_{k_{j_l}}}+1)^{m_1}
\rightharpoonup z$ in $L^{\frac{m_1+1}{m_1}}(\Omega\times(0,T))$. Due to pointwise convergence, it holds
$z=(u+1)^{m_1}$ almost everywhere in $\Omega\times(0,T)$.
\end{proof}
Since our goal is to show that the pair of limit functions $(u,v)$ solve
\eqref{Poröse Medien mit nichtlinearer Diffusion} in
a weak sense, we also have to show weak convergence of some mixed terms.
We start that with the main challenge, which is
the weak convergence of the regularization term in $L^2(\Omega\times(0,T))$,
which depends on $\nabla v_\eps$ in a nonlinear manner.
\begin{lem}\label{schwache Konvergenz des Bruchs gegen nabla v}
 Suppose \eqref{Generalvoraussetzungen} and \eqref{Voraussetzungen an die approximierenden Anfangsdaten}.
 Let $(\eps_k)_{k\in\N}\subseteq(0,\infty)$ be a nullsequence
 and let $(u_{\eps_k},v_{\eps_k})_{k\in\N}$ be a sequence of classical solutions
 to \eqref{eps>0-System} and $T>0$.
 Then there is a subsequence $((u_{\eps_{k_j}},v_{\eps_{k_j}}))_{j\in\N}$ such that
 \begin{align*}
  \frac{\nabla v_{\eps_{k_j}}}{1+\eps_{k_j}\vert\nabla v_{\eps_{k_j}}\vert}\rightharpoonup\nabla v
  \quad\text{in }L^2(\Omega\times(0,T))\quad\text{as }j\to\infty.
 \end{align*}
\end{lem}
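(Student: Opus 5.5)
We split the quotient into its limit and a small remainder,
\begin{align*}
 \frac{\nabla v_{\eps}}{1+\eps\vert\nabla v_\eps\vert}
 =\nabla v_\eps-\frac{\eps\vert\nabla v_\eps\vert\nabla v_\eps}{1+\eps\vert\nabla v_\eps\vert},
\end{align*}
where $\eps=\eps_k$. By Lemma \ref{schwache Konvergenzen von u v und Gradienten} we may choose a subsequence along which $\nabla v_{\eps_{k_j}}\rightharpoonup\nabla v$ in $L^2(\Omega\times(0,T))$. It therefore suffices to show that the remainder $R_\eps:=\frac{\eps\vert\nabla v_\eps\vert}{1+\eps\vert\nabla v_\eps\vert}\nabla v_\eps$ converges weakly to $0$ in $L^2(\Omega\times(0,T))$.

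The family $(R_\eps)$ is bounded in $L^2(\Omega\times(0,T))$. Indeed, $|R_\eps|\le|\nabla v_\eps|$, and Lemma \ref{L2 Raum Zeit Beschränktheit von nabla v} bounds $\nabla v_\eps$ in $L^2$ uniformly in $\eps$. Since bounded sets in $L^2$ are weakly sequentially compact, weak convergence $R_\eps\rightharpoonup 0$ follows once we show $R_\eps\to0$ in $L^1(\Omega\times(0,T))$, or just $\int_0^T\!\io R_\eps\cdot\Phi\to0$ for all $\Phi\in C_c^\infty(\Omega\times(0,T);\R^n)$. Every weak subsequential limit is then $0$, so the whole subsequence converges weakly to $0$.

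For the $L^1$ convergence, fix $M>0$ and split the domain into the sets $\{|\nabla v_\eps|\le M\}$ and $\{|\nabla v_\eps|>M\}$.
\begin{itemize}
 \item On $\{|\nabla v_\eps|\le M\}$ we have $|R_\eps|\le\eps M^2$, so this part of the integral is at most $\eps M^2|\Omega|T$.
 \item On $\{|\nabla v_\eps|>M\}$ we use $|R_\eps|\le|\nabla v_\eps|\le|\nabla v_\eps|^2/M$, so this part is at most $\frac1M\int_0^T\!\io|\nabla v_\eps|^2\le\frac{C(T+1)}{M}$, again by Lemma \ref{L2 Raum Zeit Beschränktheit von nabla v}.
\end{itemize}
Taking $\limsup_{\eps\to0}$ and then letting $M\to\infty$ gives $\|R_\eps\|_{L^1(\Omega\times(0,T))}\to0$. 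In particular $\frac{1}{1+\eps|\nabla v_\eps|}\to1$ in measure, which is the paper's reformulation of the claim.

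The main obstacle is that $\nabla v_\eps$ is bounded only in $L^2$ and converges only weakly. We cannot pass to a pointwise limit in the nonlinear quotient, and we cannot prove strong $L^2$ convergence of $R_\eps$, because $|R_\eps|^2$ need not be uniformly integrable. The truncation argument above avoids both problems: it needs only the uniform $L^2$ bound to get $L^1$ smallness of the remainder, and it then upgrades this to weak $L^2$ convergence using the $L^2$ boundedness of $R_\eps$. No strong convergence of gradients is required.
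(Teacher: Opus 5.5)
Your proof is correct, but it is organised differently from the paper's. The paper first extracts a weak $L^2$ limit $z$ of the quotient. It then proves the multiplicative statement $\frac{1}{1+\eps\vert\nabla v_\eps\vert}\to 1$ strongly in $L^2(\Omega\times(0,T))$, via $\Vert 1-\frac{1}{1+\eps\vert\nabla v_\eps\vert}\Vert_{L^2}^2\le\eps^2\int_0^T\io\vert\nabla v_\eps\vert^2$. Finally it identifies $z=\nabla v$ through the weak-times-strong product rule (weak convergence in $L^1$) and uniqueness of weak limits.

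You instead split additively and show that the remainder $R_\eps$ vanishes in $L^1$. This avoids the product lemma and shows directly that the whole subsequence from Lemma \ref{schwache Konvergenzen von u v und Gradienten} works, with no further extraction.

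Your truncation at level $M$ is, however, unnecessary. Since $\frac{\eps\vert\nabla v_\eps\vert}{1+\eps\vert\nabla v_\eps\vert}\le\eps\vert\nabla v_\eps\vert$, you have the pointwise bound $\vert R_\eps\vert\le\eps\vert\nabla v_\eps\vert^2$, hence $\Vert R_\eps\Vert_{L^1(\Omega\times(0,T))}\le C(T+1)\eps$ immediately. That is a linear rate, instead of the roughly $\eps^{1/3}$ you get by optimising in $M$.

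This one-line bound is the paper's estimate in disguise. Indeed $\vert R_\eps\vert=(1-\frac{1}{1+\eps\vert\nabla v_\eps\vert})\vert\nabla v_\eps\vert$, and Cauchy--Schwarz gives $\Vert R_\eps\Vert_{L^1}\le\Vert 1-\frac{1}{1+\eps\vert\nabla v_\eps\vert}\Vert_{L^2}\Vert\nabla v_\eps\Vert_{L^2}$. So both arguments rest on the same quantitative smallness. They differ only in packaging: strong convergence of the scalar factor in the paper, $L^1$-smallness of the remainder in yours.
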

\begin{proof}
 Due to \eqref{nabla veps schwach nabla v},
 we know $\nabla v_{\eps_{k_j}}\rightharpoonup\nabla v$ in $L^2(\Omega\times(0,T))$. Since
 $\vert\frac{\nabla v_{\eps_{k_j}}}{1+\eps_{k_j}\vert\nabla v_{\eps_{k_j}}\vert}\vert\leq
 \vert\nabla v_{\eps_{k_j}}\vert$ for all $j\in\N$ and due to boundedness of $(\nabla v_{\eps_{k_j}})_{j\in\N}$
 in $L^2(\Omega\times(0,T))$ by Lemma \ref{L2 Raum Zeit Beschränktheit von nabla v},
 there is $z\in L^2(\Omega\times(0,T))$ such that
 $\frac{\nabla v_{\eps_{k_j}}}{1+\eps_{k_j}\vert\nabla v_{\eps_{k_j}}\vert}\rightharpoonup z$ in
 $L^2(\Omega\times(0,T))$.
 It remains to show $z=\nabla v$.
 By Lemma \ref{L2 Raum Zeit Beschränktheit von nabla v}
 we find a constant $C(T)>0$ such that
 \begin{align*}
  &\left\Vert 1-\frac{1}{1+\eps_{k_j}\vert\nabla v_{\eps_{k_j}}\vert}\right\Vert_{L^2(\Omega\times(0,T))}^2
  =\int_0^T\io\left(1-\frac{1}{1+\eps_{k_j}\vert\nabla v_{\eps_{k_j}}\vert}\right)^2\\
  =&\int_0^T\io\left( \frac{\eps_{k_j}\vert\nabla v_{\eps_{k_j}}\vert}{1+\eps_{k_j}
  \vert\nabla v_{\eps_{k_j}}\vert} \right)^2
  =\eps_{k_j}^2\int_0^T\io\frac{\vert\nabla v_{\eps_{k_j}}\vert^2}{(1+\eps_{k_j}
  \vert\nabla v_{\eps_{k_j}}\vert)^2}\\
  \leq&\eps_{k_j}^2\int_0^T\io\vert\nabla v_{\eps_{k_j}}\vert^2
  \leq \eps_{k_j}^2C(T)\to 0\quad\text{as }\eps_{k_j}\searrow0.
 \end{align*}
Hence, $\frac{1}{1+\eps_{k_j}\vert\nabla v_{\eps_{k_j}}\vert}\to 1$ in $L^2(\Omega\times(0,T))$. This implies
\begin{align*}
 \nabla v_{\eps_{k_j}}\cdot\frac{1}{1+\eps_{k_j}\vert\nabla v_{\eps_{k_j}}\vert}\rightharpoonup\nabla v\cdot 1
 \quad\text{in }L^1(\Omega\times(0,T)).
\end{align*}
Due to uniqueness of weak limits, it follows $z=\nabla v$ and hence
$\frac{\nabla v_{\eps_{k_j}}}{1+\eps_{k_j}\vert\nabla v_{\eps_{k_j}}\vert}\rightharpoonup z=\nabla v$
in $L^2(\Omega\times(0,T))$.
\end{proof}

\begin{lem}\label{schwache Konvergenz (ueps+1)m1-1 nabla u}
 Suppose \eqref{Generalvoraussetzungen} and \eqref{Voraussetzungen an die approximierenden Anfangsdaten}.
 Let $(\eps_k)_{k\in\N}\subseteq(0,\infty)$ are nullsequence
 and let $(u_{\eps_k},v_{\eps_k})_{k\in\N}$ be a sequence of classical solutions of \eqref{eps>0-System} and $T>0$.
 Then there is a subsequence $(u_{\eps_{k_j}},v_{\eps_{k_j}})_{j\in\N}$ such that
 \begin{align*}
  (u_{\eps_{k_j}}+1)^{m_1-1}\nabla u_{\eps_{k_j}}\rightharpoonup(u+1)^{m_1-1}\nabla u
  \quad\text{in }L^{\frac{m_1+1}{m_1}}(\Omega\times(0,T)).
 \end{align*}
\end{lem}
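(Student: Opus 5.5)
Rather than handling the product of $(u_\eps+1)^{m_1-1}$ and $\nabla u_\eps$ directly, I would rewrite the flux as a gradient,
\begin{align*}
 (u_{\eps}+1)^{m_1-1}\nabla u_{\eps}=\frac{1}{m_1}\nabla (u_{\eps}+1)^{m_1},
\end{align*}
so the claim becomes weak convergence of $\nabla(u_{\eps_{k_j}}+1)^{m_1}$ to $\nabla(u+1)^{m_1}$. The plan has three steps: a uniform bound in $L^{\frac{m_1+1}{m_1}}(\Omega\times(0,T))$, extraction of a weakly convergent subsequence, and identification of the limit using the convergences already established in Lemma \ref{Normkonvergenzen von ueps und veps}.

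\textbf{Uniform bound.} By Hölder's inequality with exponents $\frac{2m_1}{m_1+1}$ and $\frac{2m_1}{m_1-1}$, applied to the splitting
\begin{align*}
 (u_\eps+1)^{m_1-1}|\nabla u_\eps|=(u_\eps+1)^{\frac{m_1-1}{2}}|\nabla u_\eps|\cdot(u_\eps+1)^{\frac{m_1-1}{2}},
\end{align*}
we get
\begin{align*}
 \int_0^T\!\!\io \bigl((u_\eps+1)^{m_1-1}|\nabla u_\eps|\bigr)^{\frac{m_1+1}{m_1}}
 \le \Bigl(\int_0^T\!\!\io (u_\eps+1)^{m_1-1}|\nabla u_\eps|^2\Bigr)^{\frac{m_1+1}{2m_1}}
 \Bigl(\int_0^T\!\!\io (u_\eps+1)^{m_1+1}\Bigr)^{\frac{m_1-1}{2m_1}}.
\end{align*}
The exponents check out: the second factor carries $(u_\eps+1)^{\frac{m_1-1}{2}\cdot\frac{m_1+1}{m_1}\cdot\frac{2m_1}{m_1-1}}=(u_\eps+1)^{m_1+1}$. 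The first factor is bounded uniformly in $\eps$ by Lemma \ref{L2 Raum Zeit Beschränktheit von nabla u}, and the second by Lemma \ref{(u+1)m_1+1 beschränkt in Raum-Zeit Integral}. Both lemmas are stated for $T<T_{max,\eps}$, which is harmless here since $T_{max,\eps}=\infty$ by Lemma \ref{globale Existenz ueps}.

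\textbf{Extraction.} Since $\frac{m_1+1}{m_1}\in(1,\infty)$, the space $L^{\frac{m_1+1}{m_1}}(\Omega\times(0,T))$ is reflexive. Reflexivity together with the uniform bound gives a further subsequence and some $z\in L^{\frac{m_1+1}{m_1}}(\Omega\times(0,T))$ with $(u_{\eps_{k_j}}+1)^{m_1-1}\nabla u_{\eps_{k_j}}\rightharpoonup z$. I would take this subsequence inside the one supplied by Lemma \ref{Normkonvergenzen von ueps und veps}, so that \eqref{(u+1)m1 schwach in Lm1+1m1} is still available along it.

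\textbf{Identification of $z$.} I expect this to be the main obstacle, because $u$ is only known to lie in $W^{1,2}$, and a pointwise chain rule for $(u+1)^{m_1}$ is not available for free. I would avoid it by working distributionally. For $\varphi\in C_c^\infty(\Omega\times(0,T);\R^n)$, integration by parts gives
\begin{align*}
 \int_0^T\!\!\io (u_{\eps_{k_j}}+1)^{m_1-1}\nabla u_{\eps_{k_j}}\cdot\varphi=-\frac{1}{m_1}\int_0^T\!\!\io (u_{\eps_{k_j}}+1)^{m_1}\nabla\cdot\varphi.
\end{align*}
The left side tends to $\int_0^T\io z\cdot\varphi$ by the weak convergence just extracted. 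The right side tends to $-\frac{1}{m_1}\int_0^T\io (u+1)^{m_1}\nabla\cdot\varphi$ by \eqref{(u+1)m1 schwach in Lm1+1m1}. Hence $z=\frac{1}{m_1}\nabla(u+1)^{m_1}$ in the distributional sense, and this gradient lies in $L^{\frac{m_1+1}{m_1}}$.

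To rewrite $z$ as $(u+1)^{m_1-1}\nabla u$, I would use a truncated chain rule for Sobolev functions. The function $s\mapsto (\min\{s,N\}+1)^{m_1}$ is Lipschitz, so the chain rule applies to $u\in L^2((0,T);W^{1,2}(\Omega))$. This gives $\nabla(\min\{u,N\}+1)^{m_1}=m_1(u+1)^{m_1-1}\nabla u\,\mathbf 1_{\{u<N\}}$. Letting $N\to\infty$ and using dominated convergence in distributions yields $z=(u+1)^{m_1-1}\nabla u$ a.e., which completes the proof.
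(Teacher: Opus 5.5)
Your route is the paper's: a uniform bound in $L^{\frac{m_1+1}{m_1}}(\Omega\times(0,T))$, then weak compactness, then identification of the limit as $\frac{1}{m_1}\nabla(u+1)^{m_1}$ by integrating by parts against \eqref{(u+1)m1 schwach in Lm1+1m1}. Your Hölder splitting is the one that actually closes. The paper pairs the unweighted $\int|\nabla u_\eps|^2$ with $(u_\eps+1)^{\frac{(m_1+1)(m_1-1)}{m_1}}$, and the conjugate exponent $\frac{2m_1}{m_1-1}$ then produces $(u_\eps+1)^{2(m_1+1)}$, not $(u_\eps+1)^{m_1+1}$. You instead use the weighted term $\int_0^T\io(u_\eps+1)^{m_1-1}|\nabla u_\eps|^2$ from Lemma \ref{L2 Raum Zeit Beschränktheit von nabla u}, and the exponents match exactly.

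The step that does not go through as written is the last one. Passing $N\to\infty$ in $m_1(u+1)^{m_1-1}\nabla u\,\mathbf{1}_{\{u<N\}}$ by dominated convergence needs an integrable majorant. The natural one, $m_1(u+1)^{m_1-1}|\nabla u|$, is not known to lie in $L^1$.
\begin{itemize}
\item For the limit you only have $\nabla u\in L^2$ and $u+1\in L^{m_1+1}(\Omega\times(0,T))$.
\item This gives $(u+1)^{m_1-1}\in L^2$ only when $m_1\le 3$.
\item The hypotheses allow $m_1>3$, and for $n\ge 6$ they force it.
\end{itemize}
In other words, integrability of $(u+1)^{m_1-1}\nabla u$ is part of what has to be proved.

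The fix is to use the Sobolev regularity you have just obtained for $W:=(u+1)^{m_1}$. Since $W\in L^{\frac{m_1+1}{m_1}}$ with $\nabla W=m_1 z\in L^{\frac{m_1+1}{m_1}}$, Stampacchia's truncation rule applied for a.e.\ $t$ gives $\nabla\min\{W,(N+1)^{m_1}\}=\mathbf{1}_{\{u<N\}}\nabla W=m_1\mathbf{1}_{\{u<N\}}z$. Because $\min\{W,(N+1)^{m_1}\}=(\min\{u,N\}+1)^{m_1}$, comparing with your Lipschitz chain rule yields $z=(u+1)^{m_1-1}\nabla u$ a.e.\ on $\{u<N\}$ for every $N$, hence a.e. No limit passage is needed; equivalently, $m_1|z|$ is the majorant you were missing. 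With this repair your proof is complete. It also supplies the identification $\frac{1}{m_1}\nabla(u+1)^{m_1}=(u+1)^{m_1-1}\nabla u$, which the paper leaves implicit when it stops at $z=\frac{1}{m_1}\nabla(u+1)^{m_1}$.
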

\begin{proof}
 Due to Hölder's inequality, Lemma \ref{L2 Raum Zeit Beschränktheit von nabla u}
 and Lemma \ref{(u+1)m_1+1 beschränkt in Raum-Zeit Integral} there is $C(T)>0$ such that
 \begin{align*}
  &\int_0^T\io ((u_{\eps_{k_j}}+1)^{m_1-1}\nabla u_{\eps_{k_j}})^{\frac{m_1+1}{m_1}}
  =\int_0^T\io(u_{\eps_{k_j}}+1)^{\frac{(m_1+1)(m_1-1)}{m_1}}\vert\nabla u_{\eps_{k_j}}\vert^{\frac{m_1+1}{m_1}}\\
  \leq&\int_0^T\left( \io\vert\nabla u_{\eps_{k_j}}\vert^2 \right)^\frac{m_1+1}{2m_1}
  \left( \io(u_{\eps_{k_j}}+1)^{m_1+1} \right)^\frac{m_1-1}{2m_1}\\
  \leq&\left( \int_0^T\io\vert\nabla u_{\eps_{k_j}}\vert^2 \right)^\frac{m_1+1}{2m_1}
  \left( \int_0^T\io (u_{\eps_{k_j}}+1)^{m_1+1} \right)^\frac{m_1+1}{2m_1}\leq C(T).
 \end{align*}
Since $\frac{m_1+1}{m_1}>1$, we can apply the Eberlein-Smulian theorem
and obtain $z\in L^{\frac{m_1+1}{m_1}}(\Omega\times(0,T))$
such that
\begin{align*}
 (u_{\eps_{k_j}}+1)^{m_1-1}\nabla u_{\eps_{k_j}}\rightharpoonup z\quad\text{in }L^\frac{m_1+1}{m_1}(\Omega\times(0,T)).
\end{align*}
Since $(u_{\eps_{k_j}}+1)^{m_1-1}\nabla u_{\eps_{k_j}}=\frac{1}{m_1}(u_{\eps_{k_j}}+1)^{m_1}$ and
due to the convergence statement \eqref{(u+1)m1 schwach in Lm1+1m1}, we have for all
$\varphi\in C_c^\infty(\Omega\times(0,T))$
\begin{align*}
 \int_0^T\io z\varphi\leftarrow\frac{1}{m_1}\int_0^T\io\nabla(u_{\eps_{k_j}}+1)^{m_1}\varphi
 =-\frac{1}{m_1}\int_0^T\io(u_{\eps_{k_j}}+1)^{m_1}\nabla\cdot\varphi
 \to\frac{1}{m_1}\int_0^T\io(u+1)^{m_1}\nabla\cdot\varphi.
\end{align*}
Thus, $z=\frac{1}{m_1}\nabla(u+1)^{m_1}$.
\end{proof}

\section{Weak solutions. Proof of Theorem \ref{Thm globale Existenz}}
We formulate the definition for weak solutions of \eqref{Poröse Medien mit nichtlinearer Diffusion}
explicitly.
\begin{definition}\label{schwache Lösungsdefinition für Grenzsystem}
Suppose \eqref{Generalvoraussetzungen} and let $T\in(0,\infty]$.
A weak solution of \eqref{Poröse Medien mit nichtlinearer Diffusion} in $\Omega\times(0,T)$
is a pair of functions $u\in L^2((0,T);\: L^2(\Omega))\cap L^2_{loc}([0,\infty);\:W^{1,2}(\Omega))$ and
$v\in L^2((0,T);\: L^2(\Omega))\cap L^2_{loc}([0,\infty);\:W^{1,2}(\Omega))$
such that $u,v$ are nonnegative and satisfy
 \begin{align*}
  -\int_0^T\io u\varphi_t-\io u_0\varphi(\cdot,0)=-\int_0^T\io (u+1)^{m_1-1}\nabla u\cdot\nabla \varphi+\int_0^T\io u\nabla v\cdot\nabla
  \varphi,
 \end{align*}
for all $\varphi\in C_c^\infty(\overline{\Omega}\times[0,T))$ and
\begin{align*}
 -\int_0^T\io v\psi_t-\io v_0\psi(\cdot,0)=&-\int_0^T\io(v+1)^{m_2-1}\nabla v\cdot\nabla\psi-\int_0^T\io u\nabla v\cdot\nabla\psi\\
 &-\int_0^T\io v\psi+\int_0^T\io u\psi,
\end{align*}
for all $\psi\in C_c^\infty(\overline{\Omega}\times[0,T))$. If $T=\infty$, we call the solution global.
\end{definition}

Our aim is to show that the functions we obtain from passing to the limit $\eps>0$ are weak solutions of
\eqref{Poröse Medien mit nichtlinearer Diffusion}
in the sense of Definition \ref{schwache Lösungsdefinition für Grenzsystem}.

\begin{lem}\label{Grenzwert löst PDE}
Suppose \eqref{Generalvoraussetzungen} and \eqref{Voraussetzungen an die approximierenden Anfangsdaten}. Let
$(\eps_k)_{k\in\N}\subseteq(0,\infty)$ be a nullsequence and $((u_{\eps_k},v_{\eps_k}))_{k\in\N}$
the sequence of the corresponding classical solutions.
 Then the pair of limit functions $(u,v)$ from Lemma \ref{Normkonvergenzen von ueps und veps}
 solves \eqref{Poröse Medien mit nichtlinearer Diffusion} weakly.
\end{lem}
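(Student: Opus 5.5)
The plan is to write down, for each $\eps=\eps_{k_j}$, the weak formulation satisfied by the classical solutions of \eqref{eps>0-System}, and then pass to the limit term by term. We extract, by successive diagonal refinement, one subsequence along which all statements of Lemma \ref{schwache Konvergenzen von u v und Gradienten}, Lemma \ref{Normkonvergenzen von ueps und veps}, Lemma \ref{schwache Konvergenz des Bruchs gegen nabla v} and Lemma \ref{schwache Konvergenz (ueps+1)m1-1 nabla u} hold simultaneously. To cover all $T>0$ we take $T\in\N$ and use a diagonal argument. Fix $\varphi,\psi\in C_c^\infty(\overline{\Omega}\times[0,T))$ and multiply the equations by these test functions. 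Integrating by parts in space, using the Neumann conditions, and in time gives
\begin{align*}
 -\int_0^T\io u_\eps\varphi_t-\io u_{0,\eps}\varphi(\cdot,0)=-\int_0^T\io(u_\eps+1)^{m_1-1}\nabla u_\eps\cdot\nabla\varphi+\int_0^T\io u_\eps\frac{\nabla v_\eps}{1+\eps\vert\nabla v_\eps\vert}\cdot\nabla\varphi,
\end{align*}
and the analogous identity for $v_\eps$ with $\psi$.

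The linear terms are routine. The terms $\int u_\eps\varphi_t$, $\int v_\eps\psi_t$, $\int v_\eps\psi$ and $\int u_\eps\psi$ converge by \eqref{ueps to u in L2} and \eqref{veps to v in L2}. The initial terms converge by \eqref{Voraussetzungen an die approximierenden Anfangsdaten}. The porous medium term in the first equation converges by Lemma \ref{schwache Konvergenz (ueps+1)m1-1 nabla u}, since $\nabla\varphi\in L^\infty$ lies in the dual of $L^{\frac{m_1+1}{m_1}}$.

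For $(v_\eps+1)^{m_2-1}\nabla v_\eps$ we rewrite it as $\frac{1}{m_2}\nabla(v_\eps+1)^{m_2}$ and integrate by parts once more, obtaining $-\frac1{m_2}\int_0^T\io(v_\eps+1)^{m_2}\Delta\psi$; the boundary term vanishes if $\partial_\nu\psi=0$. In general, or alternatively, we argue as follows. Lemma \ref{globale Existenz veps} bounds $v_\eps$ uniformly in $L^\infty$, and \eqref{veps to v in L2} gives a.e.\ convergence along a further subsequence. Hence $(v_\eps+1)^{m_2-1}\to(v+1)^{m_2-1}$ strongly in $L^2$ by dominated convergence. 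Multiplying by $\nabla\psi$ keeps it strongly convergent, and pairing with $\nabla v_\eps\rightharpoonup\nabla v$ in $L^2$ yields the limit.

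The cross terms $u_\eps\frac{\nabla v_\eps}{1+\eps|\nabla v_\eps|}\cdot\nabla\varphi$ and $u_\eps\nabla v_\eps\cdot\nabla\psi$ are products of a strongly convergent factor ($u_\eps\nabla\varphi\to u\nabla\varphi$ in $L^2$ by \eqref{ueps to u in L2}) and a weakly convergent one. The weak convergence comes from Lemma \ref{schwache Konvergenz des Bruchs gegen nabla v}, respectively \eqref{nabla veps schwach nabla v}, both in $L^2(\Omega\times(0,T))$, so their integrals converge to $\int u\nabla v\cdot\nabla\varphi$ and $\int u\nabla v\cdot\nabla\psi$.

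The main obstacle is the sign convention. In the $\eps$-problem the $v$-equation contains $+\nabla\cdot(u_\eps\nabla v_\eps)$, so after integration by parts it contributes $-\int u\nabla v\cdot\nabla\psi$, matching Definition \ref{schwache Lösungsdefinition für Grenzsystem}. Likewise $-\nabla\cdot(u_\eps\ldots)$ gives $+\int u\nabla v\cdot\nabla\varphi$ in the first identity.

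It remains to check the regularity and nonnegativity. Nonnegativity of $u,v$ follows from a.e.\ convergence and $u_\eps,v_\eps\ge0$. The regularity required by the definition is supplied by Lemma \ref{Normkonvergenzen von ueps und veps}. We also need $u\in L^2(\Omega\times(0,T))$, which holds since $m_1\ge2$. Finally, the limit $(u,v)$ must be independent of $T$. This is ensured by the diagonal subsequence, on which the limits for different $T$ agree on overlaps because $L^2$-limits are unique. The solution is therefore global.
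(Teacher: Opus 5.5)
Your proposal is correct and follows the same route as the paper. You write the weak identities for the classical $\eps$-solutions and pass to the limit term by term, pairing the strong $L^2$ convergence of $u_\eps$, $v_\eps$ with the weak convergences of $\nabla v_\eps$, $\frac{\nabla v_\eps}{1+\eps|\nabla v_\eps|}$ and $(u_\eps+1)^{m_1-1}\nabla u_\eps$. Your argument is in fact more complete than the paper's proof, which never treats $\int_0^T\io(v_\eps+1)^{m_2-1}\nabla v_\eps\cdot\nabla\psi$ or the $v_{0,\eps}$ initial term. Your dominated-convergence argument for that term (uniform $L^\infty$ bound from Lemma \ref{globale Existenz veps} together with $\nabla v_\eps\rightharpoonup\nabla v$ in $L^2$) is exactly what is needed, whereas the $\Delta\psi$ variant needs $\partial_\nu\psi=0$ and should be dropped.
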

\begin{proof}
Let $\varphi\in C_c^\infty(\overline{\Omega}\times[0,\infty))$. Then the global classical solutions
$(u_{\eps_k},v_{\eps_k})$ (cf. Lemma \ref{globale Existenz ueps} and Lemma \ref{globale Existenz veps}) satisfy
\begin{align*}
  -\int_0^\infty\io u_{\eps_k}\varphi_t-\io u_{0,\eps_k}\varphi(\cdot,0)
  =&-\int_0^\infty\io (u_{\eps_k}+1)^{m_1-1}\nabla u_{\eps_k}\cdot\nabla \varphi
  +\int_0^\infty\io\frac{u_{\eps_k}\nabla v_{\eps_k}\cdot\nabla\varphi}{1+\eps_k\vert\nabla v_{\eps_k}\vert},
 \end{align*}
 and
 \begin{align*}
-\int_0^\infty\io v_{\eps_k}\varphi_t-\io v_{0,\eps}\varphi(\cdot,0)
=&-\int_0^\infty\io(v_{\eps_k}+1)^{m_2-1}\nabla v_{\eps_k}\cdot\nabla\varphi\\
 &-\int_0^\infty\io u_{\eps_k}\nabla v_{\eps_k}\cdot\nabla\varphi
 -\int_0^\infty\io v_{\eps_k}\varphi+\int_0^\infty\io u_{\eps_k}\varphi,\quad\text{for all }k\in\N.
 \end{align*}
Since $\varphi_t\in L^2(\Omega\times(0,\infty))$ and $\text{supp}\:\varphi_t$ is bounded,
the $L_{loc}^2$-convergence \eqref{ueps to u in L2} implies for a subsequence
\begin{align*}
 \int_0^\infty\io u_{\eps_{k_j}}\varphi_t\to\int_0^\infty\io u\varphi_t,
\end{align*}
as $\eps_{k_j}\searrow 0$. Due to \eqref{Voraussetzungen an die approximierenden Anfangsdaten}, we have
\begin{align*}
 -\io u_{0,\eps_{k_j}}\varphi(\cdot,0)\to-\io u_0\varphi(\cdot,0).
\end{align*}
Since $\nabla\varphi\in L^{m_1}(\Omega\times(0,\infty))$ and $\text{supp }\nabla\varphi$ is bounded, and due to
Lemma \ref{schwache Konvergenz (ueps+1)m1-1 nabla u}, we obtain
\begin{align*}
 \int_0^\infty\io(u_{\eps_{k_j}}+1)^{m_1-1}\nabla u_{\eps_{k_j}}\cdot\nabla\varphi\to
 \int_0^\infty\io(u+1)^{m_1-1}\nabla u\cdot\nabla\varphi.
\end{align*}
Since $\nabla\varphi\in L^\infty(\Omega\times(0,\infty))$ and $\text{supp }\nabla\varphi$ is bounded,
and $u_{\eps_{k_j}}\to u$ in $L_{loc}^2(\Omega\times[0,\infty))$, Lemma
\ref{schwache Konvergenz des Bruchs gegen nabla v} implies
\begin{align*}
 \int_0^\infty\io u_{\eps_{k_j}}\frac{\nabla v_{\eps_{k_j}}}{1+\eps_{k_j}\vert\nabla v_{\eps_{k_j}}\vert}\cdot
 \nabla\varphi\to\int_0^\infty\io u\nabla v\cdot\nabla\varphi.
\end{align*}
For the second equation, we use \eqref{veps to v in L2}
due to $\varphi_t\in L^2(\Omega\times(0,\infty))$ as well as boundedness of $\text{supp }\varphi_t$ and obtain
\begin{align*}
 \int_0^\infty\io v_{\eps_{k_j}}\varphi_t\to\int_0^\infty\io v\varphi_t
\end{align*}
as $\eps_{k_j}\searrow 0$. Since $u_{\eps_{k_j}}\to u$ in $L_{loc}^2(\Omega\times[0,\infty))$
and $\nabla v_{\eps_{k_j}}\rightharpoonup \nabla v$ in $L_{loc}^2(\Omega\times[0,\infty))$,
we have $u_{\eps_{k_j}}\nabla v_\eps\rightharpoonup u\nabla v$ in $L_{loc}^1(\Omega\times[0,\infty))$.
This implies
\begin{align*}
 \int_0^\infty\io u_{\eps_{k_j}}\nabla v_{\eps_{k_j}}\cdot\nabla\varphi
 \to\int_0^\infty\io u\nabla v\cdot\nabla\varphi
\end{align*}
for all $\varphi\in C_c^\infty(\overline{\Omega}\times[0,\infty))$.
Due to $L_{loc}^2$-convergence of $u_{\eps_{k_j}}$ \eqref{ueps to u in L2} and $v_{\eps_{k_j}}$ \eqref{veps to v in L2},
it holds
\begin{align*}
 -\int_0^\infty\io v_{\eps_{k_j}}\varphi+\int_0^\infty\io u_{\eps_{k_j}}\varphi
 \to-\int_0^\infty\io v\varphi+\int_0^\infty\io u\varphi.
\end{align*}
Hence, the pair $(u,v)$ is a weak solution of \eqref{Poröse Medien mit nichtlinearer Diffusion}.
\end{proof}

\begin{proof}[Proof of Theorem \ref{Thm globale Existenz}]
 The global existence follows from Lemma \ref{Normkonvergenzen von ueps und veps}, where
 the limit functions $u,v$ are shown to exist,
 and Lemma \ref{Grenzwert löst PDE} asserting that $(u,v)$ is a weak solution.
 The estimate \eqref{Beschränktheit der Lösungen des Grenzsystems im Thm} is a consequence of the bounds from
 Lemma \ref{ueps und veps in Lm1 beschränkt} and Lemma \ref{globale Existenz veps}
 and of the convergence results \eqref{schwache Konvergenz in Lm1} and \eqref{schwache Konvergenz von v in Lp} of Lemma \ref{schwache Konvergenzen von u v und Gradienten}.
\end{proof}

\section{Acknowledgments}
This research was funded by the Deutsche Forschungsgemeinschaft (DFG, German Research Foundation)
– project no. 552796953 (``Explosions and solvability in cross-diffusive systems of chemotaxis type''),
whose support is gratefully acknowledged.

\phantomsection
\addcontentsline{toc}{section}{References}
 \bibliographystyle{abbrv}
 \bibliography{lit.bib}

\begin{thebibliography}{10}

\bibitem{amann1993nonhomogeneous}
H.~Amann.
\newblock Nonhomogeneous linear and quasilinear elliptic and parabolic boundary
  value problems.
\newblock In {\em Function spaces, differential operators and nonlinear
  analysis}, page 9–126. Springer, 1993.

\bibitem{BELBELTAOWIN2015}
N.~Bellomo, A.~Bellouquid, Y.~Tao, and M.~Winkler.
\newblock Toward a mathematical theory of keller–segel models of pattern
  formation in biological tissues.
\newblock {\em Mathematical Models and Methods in Applied Sciences},
  25:1663--1763, 05 2015.

\bibitem{bellomo2017degenerate}
N.~Bellomo and M.~Winkler.
\newblock A degenerate chemotaxis system with flux limitation: Maximally
  extended solutions and absence of gradient blow-up.
\newblock {\em Communications in Partial Differential Equations},
  42(3):436–473, 2017.

\bibitem{blanchet2009critical}
A.~Blanchet, J.~A. Carrillo, and P.~Laurençot.
\newblock {Critical mass for a Patlak–Keller–Segel model with degenerate
  diffusion in higher dimensions}.
\newblock {\em Calculus of Variations and Partial Differential Equations},
  35(2):133–168, 2009.

\bibitem{Cao_Fuest_2025}
X.~Cao and M.~Fuest.
\newblock {Finite-time blow-up in fully parabolic quasilinear Keller-Segel
  systems with supercritical exponents}, Feb. 2025.
\newblock arXiv:2409.19388v2.

\bibitem{Bo2014}
B.~Chen, Y.~Jia, Y.~Gao, L.~Sanchez, S.~M. Anthony, and Y.~Yu.
\newblock Janus particles as artificial antigen-presenting cells for $t$ cell
  activation.
\newblock {\em ACS Applied Materials \& Interfaces}, 6(21):18435--18439, 10
  2014.

\bibitem{Cieslak_Stinner_2015}
T.~Cieślak and C.~Stinner.
\newblock {New critical exponents in a fully parabolic quasilinear
  Keller–Segel system and applications to volume filling models}.
\newblock {\em Journal of Differential Equations}, 258(6):2080–2113, Mar.
  2015.

\bibitem{MR4026927}
M.~Ding and X.~Zhao.
\newblock {{$L^\sigma$}-measure criteria for boundedness in a quasilinear
  parabolic-parabolic {K}eller-{S}egel system with supercritical sensitivity}.
\newblock {\em Discrete and Continuous Dynamical Systems. Series B. A Journal
  Bridging Mathematics and Sciences}, 24(10):5297–5315, 2019.

\bibitem{Fuest_Lankeit_Tanaka_2023}
M.~Fuest, J.~Lankeit, and Y.~Tanaka.
\newblock {Critical mass phenomena in higher dimensional quasilinear
  {K}eller-{S}egel systems with indirect signal production}.
\newblock {\em Mathematical Methods in the Applied Sciences},
  46(13):14362–14378, 2023.

\bibitem{Hashira_Ishida_Yokota_2018}
T.~Hashira, S.~Ishida, and T.~Yokota.
\newblock {Finite-time blow-up for quasilinear degenerate {K}eller-{S}egel
  systems of parabolic-parabolic type}.
\newblock {\em Journal of Differential Equations}, 264(10):6459–6485, 2018.

\bibitem{MR1627338}
M.~A. Herrero and J.~J.~L. Velázquez.
\newblock A blow-up mechanism for a chemotaxis model.
\newblock {\em Annali della Scuola Normale Superiore di Pisa. Classe di
  Scienze. Serie IV}, 24(4):633–683 (1998), 1997.

\bibitem{MR2013508}
D.~Horstmann.
\newblock From 1970 until present: the {K}eller-{S}egel model in chemotaxis and
  its consequences. {I}.
\newblock {\em Jahresbericht der Deutschen Mathematiker-Vereinigung},
  105(3):103–165, 2003.

\bibitem{Ishida_Seki_Yokota2014}
S.~Ishida, K.~Seki, and T.~Yokota.
\newblock {Boundedness in quasilinear Keller–Segel systems of
  parabolic–parabolic type on non-convex bounded domains}.
\newblock {\em Journal of Differential Equations}, 256(8):2993–3010, Apr.
  2014.

\bibitem{ishida2012global}
S.~Ishida and T.~Yokota.
\newblock {Global existence of weak solutions to quasilinear degenerate
  Keller–Segel systems of parabolic–parabolic type}.
\newblock {\em Journal of Differential Equations}, 252(2):1421–1440, 2012.

\bibitem{KELLER1970399}
E.~F. Keller and L.~A. Segel.
\newblock Initiation of slime mold aggregation viewed as an instability.
\newblock {\em Journal of Theoretical Biology}, 26(3):399–415, 1970.

\bibitem{LSU}
O.~Ladyzenskaja, V.~Solonnikov, and N.~Ural'ceva.
\newblock {\em Linear and Quasi-linear Equations of Parabolic Type}, volume~23
  of {\em Translations of Mathematical Monographs}.
\newblock American Mathematical Society, 1968.

\bibitem{Lankeit_Lankeit_2019}
E.~Lankeit and J.~Lankeit.
\newblock Classical solutions to a logistic chemotaxis model with singular
  sensitivity and signal absorption.
\newblock {\em Nonlinear Analysis: Real World Applications}, 46:421–445, Apr.
  2019.

\bibitem{Lankeit_2020}
J.~Lankeit.
\newblock {Infinite time blow-up of many solutions to a general quasilinear
  parabolic-elliptic {K}eller-{S}egel system}.
\newblock {\em Discrete and Continuous Dynamical Systems. Series S},
  13(2):233–255, 2020.

\bibitem{MR4071449}
J.~Lankeit and M.~Winkler.
\newblock Facing low regularity in chemotaxis systems.
\newblock {\em Jahresbericht der Deutschen Mathematiker-Vereinigung},
  122(1):35–64, 2020.

\bibitem{Laurencot_Mizoguchi_2017}
P.~Laurençot and N.~Mizoguchi.
\newblock Finite time blowup for the parabolic–parabolic {K}eller–{S}egel
  system with critical diffusion.
\newblock {\em Annales de l'Institut Henri Poincaré C, Analyse non linéaire},
  34(1):197–220, Feb. 2017.

\bibitem{PhysRevLett.115.258301}
B.~Liebchen, D.~Marenduzzo, I.~Pagonabarraga, and M.~E. Cates.
\newblock Clustering and pattern formation in chemorepulsive active colloids.
\newblock {\em Physical Review Letters}, 115:258301, Dec 2015.

\bibitem{MR932759}
G.~M. Lieberman.
\newblock Hölder continuity of the gradient of solutions of uniformly
  parabolic equations with conormal boundary conditions.
\newblock {\em Annali di Matematica Pura ed Applicata. Serie Quarta},
  148:77–99, 1987.

\bibitem{Lieberman1996}
G.~M. Lieberman.
\newblock {\em Second order parabolic differential equations}.
\newblock World Scientific Publishing Co., Inc., River Edge, NJ, 1996.

\bibitem{Liu_Wang_2015}
J.-G. Liu and J.~Wang.
\newblock {A Note on $L^\infty$-Bound and Uniqueness to a Degenerate
  Keller-Segel Model}.
\newblock {\em Acta Applicandae Mathematicae}, 142:173–188, 06 2015.

\bibitem{MIMURA20171477}
Y.~Mimura.
\newblock {The variational formulation of the fully parabolic Keller–Segel
  system with degenerate diffusion}.
\newblock {\em Journal of Differential Equations}, 263(2):1477–1521, 2017.

\bibitem{Mizukami2019}
M.~Mizukami, T.~Ono, and T.~Yokota.
\newblock Extensibility criterion ruling out gradient blow-up in a quasilinear
  degenerate chemotaxis system with flux limitation.
\newblock {\em Journal of Differential Equations}, 267(9):5115–5164, Oct.
  2019.

\bibitem{Painter2019}
K.~J. Painter.
\newblock Mathematical models for chemotaxis and their applications in
  self-organisation phenomena.
\newblock {\em Journal of Theoretical Biology}, 481:162–182, Nov. 2019.

\bibitem{simon1986}
J.~Simon.
\newblock Compact sets in the space ${L}^p(0,{T};{B})$.
\newblock {\em Annali di Matematica pura ed applicata}, 146:65--96, 1986.

\bibitem{stinner2024critical}
C.~Stinner and M.~Winkler.
\newblock {A critical exponent in a quasilinear Keller–Segel system with
  arbitrarily fast decaying diffusivities accounting for volume-filling
  effects}.
\newblock {\em Journal of Evolution Equations}, 24(2):26, 2024.

\bibitem{Sugiyama2007}
Y.~Sugiyama.
\newblock {Time global existence and asymptotic behavior of solutions to
  degenerate quasi-linear parabolic systems of chemotaxis}.
\newblock {\em Differential and Integral Equations}, 20:133–180, 01 2007.

\bibitem{Tao_Winkler_2012}
Y.~Tao and M.~Winkler.
\newblock {Boundedness in a quasilinear parabolic–parabolic Keller–Segel
  system with subcritical sensitivity}.
\newblock {\em Journal of Differential Equations}, 252(1):692–715, Jan. 2012.

\bibitem{tao2025suppression}
Y.~Tao and M.~Winkler.
\newblock Suppression of blow-up by local anisotropy of signal production in
  the {K}eller-{S}egel system.
\newblock {\em Journal de Mathématiques Pures et Appliquées}, page 103795,
  2025.

\bibitem{tao2026preventinglpblowuplocal}
Y.~Tao and M.~Winkler.
\newblock Preventing ${L}^p$ blow-up by local anisotropy of signal production
  in the {K}eller-{S}egel system with strongly differing diffusion rates, 2026.
\newblock arXiv:2606.06009.

\bibitem{Tian_Zheng_2016}
M.~Tian and S.~Zheng.
\newblock {Global boundedness versus finite-time blow-up of solutions to a
  quasilinear fully parabolic {K}eller-{S}egel system of two species}.
\newblock {\em Communications on Pure and Applied Analysis}, 15(1):243–260,
  2016.

\bibitem{ScienceDirect:S0021782413000287}
M.~Winkler.
\newblock Finite-time blow-up in the higher-dimensional parabolic–parabolic
  {K}eller–{S}egel system.
\newblock {\em Journal de Mathématiques Pures et Appliquées},
  100(5):748–767, Nov. 2013.

\bibitem{Winkler_2019}
M.~Winkler.
\newblock {Global classical solvability and generic infinite-time blow-up in
  quasilinear Keller–Segel systems with bounded sensitivities}.
\newblock {\em Journal of Differential Equations}, 266(12):8034–8066, June
  2019.

\bibitem{Zhao_2026}
Y.~Zhao.
\newblock Boundedness facilitated by large diffusion rate of cells in the
  {K}eller-{S}egel model with anisotropic signal production.
\newblock {\em Journal of Mathematical Analysis and Applications}, 564(2):Paper
  No. 130915, 11, 2026.

\end{thebibliography}

\end{document}